\documentclass{amsart}
\usepackage{amssymb,mathtools}
\usepackage[hidelinks]{hyperref}
\usepackage{xcolor}
\usepackage{tikz}

\numberwithin{equation}{section}

\newtheorem{theorem}{Theorem}[section]
\newtheorem{proposition}[theorem]{Proposition}
\newtheorem{lemma}[theorem]{Lemma}

\theoremstyle{definition}

\newtheorem{remark}[theorem]{Remark}

\DeclareMathOperator{\Alt}{Alt}
\DeclareMathOperator{\spanR}{span}
\DeclareMathOperator{\rank}{rank}

\makeatletter
\let\original@tocwrite\@tocwrite
\renewcommand{\@tocwrite}[2]{%
  \def\toc@type{#1}%
  \def\toc@subsection{subsection}%
  \ifx\toc@type\toc@subsection
    \ifx\@secnumber\@empty
    \else
      \original@tocwrite{#1}{#2}%
    \fi
  \else
    \original@tocwrite{#1}{#2}%
  \fi
}
\makeatother

\newcommand{\R}{\mathbb R}
\newcommand{\Z}{\mathbb Z}
\newcommand{\ip}[2]{\left\langle #1,#2\right\rangle}
\newcommand{\abs}[1]{\left|#1\right|}
\newcommand{\norm}[1]{\left\lVert#1\right\rVert}
\newcommand{\eps}{\varepsilon}
\newcommand{\dd}{\,\mathrm d}
\newcommand{\cA}{\mathcal A}
\newcommand{\cD}{\mathcal D}

\title[Rootwise estimates for Weyl alternants]
{Rootwise estimates for Weyl alternants}

\author[X. Li]{Xiaocheng Li}
\address{School of Mathematics and Statistics, Nanjing University of Science and Technology, Nanjing, 210094, China}
\email{lixiaocheng@njust.edu.cn}

\author[Y. Zhang]{Yunfeng Zhang}
\address{Department of Mathematics, University of Mississippi,
University, MS 38677}
\email{yzhang19@olemiss.edu}

\subjclass[2020]{Primary 22E30; Secondary 20F55, 43A90}
\keywords{Weyl alternants, spherical functions, Weyl character formula, radial derivatives,
BGG--Demazure identity}
\date{}
\begin{document}

\begin{abstract}
For a reduced crystallographic root system with Weyl group \(W\), Weyl alternants are signed sums of exponentials indexed by \(W\). They occur in the numerators of the Weyl character formula and the explicit formula for spherical functions on complex semisimple groups. We establish rootwise bounds that quantify cancellation in these oscillatory sums, with each positive root contributing a factor depending on both the spatial variable and the spectral parameter. More generally, we establish rootwise bounds for all mixed directional derivatives of normalized Weyl alternants. Our proof extends to arbitrary rank the descent strategy used by the second author for pointwise character bounds on \(\mathrm{SU}(3)\). The key ingredients include the decomposition of the Weyl group into parabolic double cosets and the relative BGG--Demazure identity. For spherical functions on complex semisimple groups, these estimates recover the recent pointwise bound of Brumley, Marshall, Matz, and Peterson, and they refine the derivative bounds of Cowling and Nevo by controlling arbitrary mixed radial derivatives uniformly across spatial and spectral root hyperplanes. Furthermore, by a standard localization argument relative to the affine Weyl arrangement, we establish periodic alternant estimates for regular integral weights and corresponding rootwise bounds for all mixed radial derivatives of irreducible characters of compact connected semisimple groups.
\end{abstract}

\maketitle

\tableofcontents

\section{Introduction}

\subsection{Weyl alternants}
\label{subsec:intro-alternants}

Let $V$ be a Euclidean space with inner product $\langle\, ,\,\rangle$, and let $V^*$ be the dual space of $V$. Let $\Phi\subset V^*$ be a reduced crystallographic
root system, 
let $\Phi^+$ be a positive system, and let
$\Delta=\{\alpha_1,\ldots,\alpha_r\}$ be the corresponding set of simple roots.
We use the Euclidean structure to identify \(V\) with \(V^*\) when
convenient, and write \(\|\cdot\|\) for the resulting norm on either
space.

The
reflection on $V$ associated with $\alpha\in V^*\setminus\{0\}$ is
\[
 s_\alpha X=X-\alpha(X)\alpha^\vee,
 \qquad
 \alpha^\vee=\frac{2\alpha}{\ip{\alpha}{\alpha}}.
\]
Let $W$ denote the Weyl group.
For the spectral variable $\lambda\in V^*$ and spatial variable $X\in V$, define the Weyl alternant
\begin{equation}\label{eq:linear-alternant}
 A_\lambda^\Phi(X)
  =\sum_{w\in W}\det(w)e^{i \langle w\lambda,X\rangle}.
\end{equation}
The skew symmetries in the spatial and spectral variables under the Weyl group 
(see \eqref{eq: skew}) imply that \(A_\lambda^\Phi(X)\) vanishes whenever
either variable lies on a root hyperplane.  The natural problem is to quantify this vanishing simultaneously
for all roots, uniformly near intersections of root hyperplanes. 
The rank-one case reduces to the elementary inequality
\[
 |\sin (\omega t)|\leq\min\{1,|\omega t|\}, \qquad\omega,t\in\R. 
\]

We begin with the following pointwise estimate, a higher-rank analogue of the bound above. Its right-hand side is the Weyl symmetrization of a product over positive roots, each factor having the form of the rank-one upper bound.

\begin{theorem}[Weyl alternant estimate]
\label{thm:linear}
There is a constant $C_\Phi$ such that, for every
$\lambda\in V^*$ and $X\in V$,
\begin{equation}\label{eq:linear-main}
 \abs{A_\lambda^\Phi(X)}
 \le C_\Phi\sum_{w\in W}
 \prod_{\alpha\in\Phi^+}
 \min\!\left\{1,\,
   \abs{\alpha(X)}
   \abs{\ip{w\lambda}{\alpha^\vee}}
 \right\}.
\end{equation}
\end{theorem}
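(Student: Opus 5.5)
The plan is to argue by induction on the rank, following the descent strategy mentioned in the abstract. The tools are the parabolic double coset decomposition of \(W\) and the integral representation of alternants coming from the BGG--Demazure identity. By the skew symmetries in \eqref{eq: skew} and the \(W\)-invariance of the right-hand side of \eqref{eq:linear-main} in \(\lambda\), we may assume \(\lambda\) is dominant. The right-hand side is also invariant under \(X\mapsto uX\), since \(u\) permutes \(\Phi\) up to sign. Hence we may further assume \(X\) lies in the closed positive chamber. It then suffices to show that \(|A_\lambda^\Phi(X)|\) is bounded by a constant times one term
\[
\prod_{\alpha\in\Phi^+}\min\{1,|\alpha(X)|\,|\langle w\lambda,\alpha^\vee\rangle|\}
\]
for a suitably chosen \(w\). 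For dominant \(\lambda\) and dominant \(X\), the natural choice is the term with \(w\) the identity, or a term adapted to the scales of \(X\) and \(\lambda\).

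Next, I split the positive roots according to scale. Let \(\Phi_X^{\mathrm{small}}\) be the roots with \(|\alpha(X)|\,|\langle\lambda,\alpha^\vee\rangle|\le\epsilon\), for a small constant \(\epsilon\) depending only on \(\Phi\). I would choose a subset \(J\subset\Delta\) through a gap argument on the simple-root values \(\alpha_j(X)\langle\lambda,\alpha_j^\vee\rangle\). The aim is for every root in \(\Phi^+\setminus\Phi_J\) to have product \(\gtrsim 1\), where \(\Phi_J\) is the root subsystem generated by \(J\). This needs a pigeonhole over at most \(r\) dyadic gaps, and the constant \(C_\Phi\) absorbs the losses.

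Write \(W=\bigsqcup_{d} W_J d W_J\) as a union of double cosets, or more simply \(W=\bigsqcup_{d\in W^J} dW_J\) using minimal coset representatives. This gives
\[
A^\Phi_\lambda(X)=\sum_{d}\det(d)\sum_{v\in W_J}\det(v)e^{i\langle v\,d^{-1}\ldots\rangle}.
\]
Each inner sum is a Weyl alternant for the subsystem \(\Phi_J\), evaluated at the projection of \(X\) and at a translated spectral parameter. The inductive hypothesis bounds each inner sum by the rootwise product over \(\Phi_J^+\). The roots outside \(\Phi_J\) contribute factors \(\min\{1,\cdot\}\asymp 1\), so the trivial bound suffices for them. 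This closes the induction in the case \(J\neq\Delta\).

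The main obstacle is the case \(J=\Delta\), in which all products \(|\alpha(X)\langle\lambda,\alpha^\vee\rangle|\) are small. There we must prove
\[
|A_\lambda(X)|\lesssim\prod_{\alpha>0}|\alpha(X)|\,|\langle\lambda,\alpha^\vee\rangle|.
\]
This is a Harish-Chandra-type bound, and it follows from the Fourier–Laplace representation
\[
A_\lambda(X)=\pi(X)\int_{\mathrm{conv}(W\lambda)} e^{i\langle\mu,X\rangle}\,(\text{spline density})\,d\mu.
\]
This representation is the continuous form of the BGG--Demazure identity, i.e. the Duistermaat--Heckman measure. Since the total mass of the density is \(\asymp\prod\langle\lambda,\alpha^\vee\rangle\), the bound follows. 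Mixed regimes, in which \(\lambda\) is singular along some directions while \(X\) is small along others, must be handled by the relative version of BGG--Demazure relative to \(W_J\). This expresses the alternant as an integral of \(\Phi_J\)-alternants over a face, with a density controlled by the roots outside \(J\).

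Making this relative identity produce exactly the min-products uniformly across all regimes is where I expect the real difficulty. In particular, the bookkeeping that matches the sum over \(w\in W\) on the right-hand side to the cosets is delicate.
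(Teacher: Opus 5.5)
Your argument has a genuine gap at the step ``This closes the induction in the case $J\neq\Delta$.'' Your choice of $J$ controls the root factors only for $w=e$, while the inductive bound for a coset block is indexed by other Weyl elements. More seriously, individual one-sided coset blocks can be far larger than the whole right side of \eqref{eq:linear-main}, so no bound by the sum of block sizes can work.

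Here is an example. Take $\Phi=A_2$ and dominant $X,\lambda$ with $\alpha_1(X)=a$, $\alpha_2(X)=b$, $\ip{\lambda}{\alpha_1^\vee}=L$, $\ip{\lambda}{\alpha_2^\vee}=\ell$. Let $bL\ll1\ll b\ell$ and $a\to0$. The simple-root products are $aL\ll1$ and $b\ell\gg1$, so your $J=\{\alpha_1\}$. Pairing $w$ with $s_1w$ gives
\[
 A_\lambda^\Phi(X)=ia\,e^{i\theta}\bigl[L(e^{ib\ell}-1)+\ell(e^{-ibL}-1)\bigr]+O(a^2),\qquad\theta\in\R .
\]
Hence $|A_\lambda^\Phi(X)|\asymp aLb\ell$, which is also the size of the right side of \eqref{eq:linear-main}. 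Its $w=e$ term is only $aL$, so the identity term is not the right choice either.

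Now look at the blocks.
\begin{itemize}
\item The left-coset blocks over $W_Js_2$ and $W_Js_2s_1$ have sizes $\approx a(L+\ell)$ and $a\ell$. These are too large by the factor $(bL)^{-1}$.
\item Correspondingly, for $w=s_2$ the root $\alpha_1+\alpha_2\notin\Phi_J$ has factor $\approx bL\ll1$, not $\asymp1$, contrary to your claim about roots outside $\Phi_J$.
\item The right-coset blocks over $s_2W_J$ and $s_1s_2W_J$ have size $\approx bL$, which does not even vanish as $a\to0$.
\end{itemize}
The true size comes from cancellation among cosets inside the double coset $W_Js_2W_J$. That block is not a single $\Phi_J$-alternant, contrary to what you assume.

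Your case $J=\Delta$ has the same defect. Take $aL,b\ell\le\epsilon$ but $bL\gg1$. Then $J=\Delta$, yet $\alpha_1+\alpha_2$ has a large product. The Harish-Chandra/Duistermaat--Heckman bound $\prod_{\alpha>0}\abs{\alpha(X)}\abs{\ip{\lambda}{\alpha^\vee}}$ then overshoots the right side by the factor $bL$. Your ``relative BGG relative to $W_J$'' is vacuous here, since $W_J=W$.

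The missing idea is to separate the spatial and spectral degenerations into two different parabolics and to extract cancellation across a double coset.
\begin{itemize}
\item The paper takes $P=W_I$ with $I=\{\alpha_i:\alpha_i(X)\le\norm{\lambda}^{-1}\}$.
\item It takes $Q=W_J$ from a gap among the coordinates $\ip{\lambda}{\alpha_i^\vee}$ themselves, not among the products.
\item It decomposes $W=\bigsqcup PuQ$ with $u$ minimal and writes each block as $\det(u)\sum_{p\in P^R}\det(p)\,p(\pi_RG_u)$. Here $G_u$ is built from the normalized lower-rank alternant $A^{\Phi_Q}_{\lambda_Q}/\pi_{R'}$.
\item The relative BGG--Demazure identity \eqref{eq:relative-BGG} turns the block into $\det(u)\pi_P\,\partial_{w_P^R}G_u$.
\end{itemize}
The factor $\pi_P$ supplies the small $\abs{\alpha(X)}$. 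The $n=|\Phi_P^+|-|\Phi_R^+|$ derivatives supply one factor $\norm{\lambda}\asymp\abs{\ip{w\lambda}{\alpha^\vee}}$ for each $\alpha\in\Phi_P^+\setminus\Phi_R^+$. In the example above, $P=Q=W_{\{\alpha_1\}}$, $u=s_2$, $R=\{e\}$, and the block is $-a\,\partial_{s_1}G_{s_2}(X)$, of size $\approx a\cdot\ell\cdot bL$.

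Because these derivatives land on lower-rank normalized alternants, you cannot run the induction on the $k=0$ inequality alone. The paper proves the derivative estimate of Proposition~\ref{prop:local-jets} for $A_\lambda^\Phi/\pi_\Theta$, using lower-rank orders up to $k+|\Phi^+|$. It then proves Theorem~\ref{thm:jets} and gets Theorem~\ref{thm:linear} as the case $k=0$.
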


Beyond the pointwise estimate, we establish rootwise bounds for all
mixed directional derivatives of Weyl alternants normalized by the
positive-root product
\[
  \pi(X)=\prod_{\alpha\in\Phi^+}\alpha(X).
\]
The quotient $A_\lambda^\Phi/\pi$, initially defined where
$\pi\neq0$, has a canonical smooth extension to all of $V$; see
Lemma~\ref{lem: BGG}. The following theorem gives
the derivative estimate.

\begin{theorem}[Derivative estimate for normalized Weyl alternants]
\label{thm:jets}
For every integer $k\geq0$, there exists a constant
$C_{\Phi,k}>0$ such that, for all $\lambda\in V^*$,
$X\in V$, and unit vectors $\xi_1,\ldots,\xi_k\in V$,
\begin{equation}\label{eq:global-alternant-jets}
\begin{aligned}
\left|
 D_{\xi_1}\cdots D_{\xi_k}
 \left(\frac{A_\lambda^\Phi}{\pi}\right)(X)
 \right|
\leq
 C_{\Phi,k}\|\lambda\|^k
 \sum_{w\in W}
 \prod_{\alpha\in\Phi^+}
 \frac{
  |\langle w\lambda,\alpha^\vee\rangle|
 }{
  1+|\alpha(X)|\,|\langle w\lambda,\alpha^\vee\rangle|
 }.
\end{aligned}
\end{equation}
\end{theorem}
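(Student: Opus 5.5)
We argue by induction on the rank, following the descent strategy indicated in the abstract. The two basic inputs are the parabolic double coset decomposition of $W$ and the relative BGG--Demazure identity of Lemma~\ref{lem: BGG}. By $W$-invariance of both sides (the normalized alternant $A_\lambda^\Phi/\pi$ is $W$-invariant in $X$ and in $\lambda$, and the right-hand side is visibly $W$-invariant), we may assume $\lambda$ is dominant and $X$ lies in the closed fundamental chamber. By homogeneity, namely $A_{t\lambda}(X)=A_\lambda(tX)$, we may also normalize $\|\lambda\|=1$. Indeed, under $X\mapsto tX$ the left side scales by $t^{-|\Phi^+|-k}$ and the right side by $t^{-k}\cdot t^{-|\Phi^+|}$, so the inequality is scale invariant. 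For dominant $\lambda$ the $w=1$ term is comparable to the full sum, up to a constant. The target is therefore
\[
\prod_{\alpha\in\Phi^+}\frac{\langle\lambda,\alpha^\vee\rangle}{1+\alpha(X)\langle\lambda,\alpha^\vee\rangle}.
\]

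Next we partition the simple roots according to the size of $\alpha(X)$ and of $\langle\lambda,\alpha^\vee\rangle$. Fix a large threshold. Let $I\subset\Delta$ be the set of simple roots with $\alpha(X)$ small, so that these roots are "near the wall," and let $\Phi_I$ be the corresponding Levi subsystem. Take $W_I\backslash W/W_J$ double coset representatives, where $J$ is the analogous set on the spectral side. Grouping the sum over $W$ accordingly writes $A_\lambda^\Phi(X)$ as a sum over minimal representatives $u$ of
\[
\det(u)\,e^{i\langle u\lambda,X\rangle}\cdot(\text{a lower-rank alternant for }\Phi_I\text{ evaluated at the projection of }u\lambda).
\]
The relative BGG--Demazure identity expresses
\[
\frac{A^\Phi_\lambda}{\pi}=\sum_u \frac{A^{\Phi_I}_{u\lambda}}{\pi_I}\cdot\frac{e^{i\langle u\lambda,\cdot\rangle}\text{-type factor}}{\prod_{\alpha\in\Phi^+\setminus\Phi_I^+}\alpha},
\]
more precisely as an iterated divided-difference/integral formula over a simplex. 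This gives a representation of $A_\lambda/\pi$ as an average of $\Phi_I$-normalized alternants times exponentials. Roots outside $\Phi_I$ satisfy $\alpha(X)\gtrsim$ the scale, so each contributes a factor $1/\alpha(X)$, which matches $\langle\lambda,\alpha^\vee\rangle/(1+\alpha(X)\langle\lambda,\alpha^\vee\rangle)$ when $\alpha(X)\langle\lambda,\alpha^\vee\rangle\gtrsim1$. In the complementary regime we instead use the trivial bound $\langle\lambda,\alpha^\vee\rangle$. That trivial bound comes from the integral representation $A_\lambda/\pi=c\int e^{i\langle\cdot\rangle}$ against the Duistermaat--Heckman-type measure, equivalently from the Harish-Chandra/Kirillov formula, whose total mass is proportional to $\prod\langle\lambda,\alpha^\vee\rangle$. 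Derivatives $D_{\xi_1}\cdots D_{\xi_k}$ fall on either the exponentials, costing $\|\lambda\|$ each, or on the inverse root factors, costing $1/\alpha(X)$. Since $\alpha(X)\langle\lambda,\alpha^\vee\rangle\gtrsim1$ there, a factor $1/\alpha(X)$ is also $\lesssim\|\lambda\|$. The lower-rank factor is then handled by the induction hypothesis applied to $\Phi_I$, with $k$ derivatives distributed by Leibniz.

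The main obstacle is choosing the splitting so that the spatial and spectral regimes are compatible for every positive root, not just the simple ones. A root $\alpha\notin\Phi_I$ may still have $\alpha(X)\langle\lambda,\alpha^\vee\rangle$ small if $\langle\lambda,\alpha^\vee\rangle$ is small. For such roots we must not pay $1/\alpha(X)$. Our remedy is to choose $I$ by a pigeonhole gap argument on the $r$ products $\alpha_j(X)\langle\lambda,\alpha_j^\vee\rangle$ for simple roots, ordered by size. With finitely many scales we can find a gap of ratio $\ge$ a fixed large constant. We then take $I$ to be the roots below the gap, after conjugating $\lambda$ within $W_I$ where necessary, which is why the sum over $w$ appears on the right. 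Positive roots that are nonnegative combinations of simple roots then inherit the dichotomy up to constants depending on $\Phi$. Non-uniqueness of the gap only affects constants. Rank one, where the claim is $|\partial^k(\sin(\omega t)/t)|\lesssim\omega^{k}\,\omega/(1+|\omega t|)$, is the base case and is elementary.
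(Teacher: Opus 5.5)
There is a genuine gap, and it starts with your reduction. For dominant $\lambda$ and $X$, the $w=1$ term is \emph{not} comparable to the sum over $W$. In $A_2$ take $\langle\lambda,\alpha_1^\vee\rangle=1$, $\langle\lambda,\alpha_2^\vee\rangle=\varepsilon$, $\alpha_1(X)=N$, $\alpha_2(X)=0$, with $N\gg1\gg N\varepsilon$. Then:
the $w=1$ term is $\asymp\varepsilon N^{-2}$;
the $w=s_1$ term is $\asymp\varepsilon N^{-1}$;
a direct computation on the wall, where $A_\lambda/\alpha_2=\tfrac12D_{\alpha_2^\vee}A_\lambda$, gives $|(A_\lambda/\pi)(X)|\asymp\varepsilon N^{-1}$.
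So the inequality you reduce to is false. The sum over $W$ is genuinely needed. In the paper it comes from the elements $w=uq$, with $u$ a double-coset representative and $q\in Q$, not from ``conjugating $\lambda$ within $W_I$.''

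The same example shows that your central mechanism cannot work. You mention $W_I\backslash W/W_J$, but $J$ never enters. Your blocks are right $W_I$-cosets carrying a $\Phi_I$-alternant, and dividing by $\prod_{\alpha\in\Phi^+\setminus\Phi_I^+}\alpha$ is only the trivial factorization of $\pi$, not the relative BGG--Demazure identity. Here $\alpha_2\in I$ is forced, leaving two options. For $I=\{\alpha_2\}$, the normalized blocks for $u=s_1$ and $u=s_1s_2$ each have absolute value $\asymp N^{-2}$. Their sum, and $A_\lambda/\pi$ itself, is $\asymp\varepsilon N^{-1}\ll N^{-2}$ once $\varepsilon\ll N^{-1}$. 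So the gain comes from cancellation \emph{between} right $W_I$-cosets, which no blockwise estimate can see. For $I=\Delta$ there is no rank reduction, and the Harish-Chandra bound $\|\lambda\|^k|\pi^\vee(\lambda)|\asymp\varepsilon$ loses a factor $N$. That bound controls the whole quotient at once; it does not let you take $\langle\lambda,\alpha^\vee\rangle$ for some roots and $\alpha(X)^{-1}$ for others.

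Your product-gap remedy does not repair this. For a nonsimple root, $\alpha(X)\langle\lambda,\alpha^\vee\rangle$ contains cross terms $\alpha_i(X)\langle\lambda,\alpha_j^\vee\rangle$ with $i\neq j$. For example, $(\alpha_1+\alpha_2)(X)\langle\lambda,(\alpha_1+\alpha_2)^\vee\rangle\geq\alpha_1(X)\langle\lambda,\alpha_2^\vee\rangle$ can be arbitrarily large while both simple products are tiny. So no dichotomy is inherited by the positive roots.

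The missing idea is to keep the two sides separate:
\begin{enumerate}
\item Take a spatial parabolic $P$ from $\alpha_i(X)\le\|\lambda\|^{-1}$, and a spectral parabolic $Q=W_J$ from a gap among the $\langle\lambda,\alpha_i^\vee\rangle$.
\item Group $W$ into double cosets $PuQ$. In the example $P=Q=W_{\{\alpha_2\}}$, and the two cancelling right cosets together form $Ps_1Q$.
\item Use Lemma~\ref{lem:relative-BGG} to write each block as $\det(u)\pi_P\,\partial_{w_P^R}G_u$, with $G_u$ built from $A^{\Phi_Q}_{\lambda_Q}/\pi_{R'}$.
\end{enumerate}
The induction then runs on $\Phi_Q$, which has lower rank because $J\subsetneq\Delta$, even when $P=W$. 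The four root classes of Lemma~\ref{lem:double-cosets} match each factor of the majorant for $w=uq$. Because the lower-rank object appears normalized only by $\pi_{R'}$, the paper sets up the induction for the parabolic-normalized Proposition~\ref{prop:local-jets}, rather than for the theorem itself.
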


Taking $k=0$, multiplying by $|\pi(X)|$, and using
$t/(1+t)\asymp\min\{1,t\}$ for $t\geq0$ gives
Theorem~\ref{thm:linear}.

As an illustration, the rank-one quotient is  $\sin(\omega t)/t$, extended smoothly at $t=0$.
Theorem~\ref{thm:jets} gives, for every integer $k\geq0$,
\[
 \left|
 \frac{\mathrm d^k}{\mathrm dt^k}
 \left(\frac{\sin(\omega t)}{t}\right)
 \right|
 \leq C_k\frac{|\omega|^{k+1}}{1+|\omega t|},
 \qquad \omega,t\in\mathbb R.
\]
This follows directly from
$\sin(\omega t)/t=\omega\int_0^1\cos(s\omega t)\,\mathrm ds$
when $|\omega t|\leq1$, and from Leibniz's rule when
$|\omega t|\geq1$.

We next apply these root-theoretic estimates to obtain bounds for spherical functions on complex semisimple groups and irreducible characters of compact connected semisimple groups, using the explicit quotient formulas for these functions.

\subsection{Spherical functions on complex groups}
\label{subsec:intro-spherical}

Let $G$ be a complex semisimple Lie group, and let $K$ be a maximal
compact subgroup. Let $\mathfrak g$ and $\mathfrak k$ denote their
Lie algebras, write $\mathfrak g=\mathfrak k\oplus\mathfrak p$ for
the associated Cartan decomposition, and let
$\mathfrak a$ be a maximal abelian subspace of $\mathfrak p$.
Let $\varphi(\zeta,X)$ denote the spherical function with spectral
parameter $\zeta\in\mathfrak a^*_{\mathbb C}$, evaluated at
$X\in\mathfrak a$ and normalized by $\varphi(\zeta,0)=1$.
We use the convention in which the tempered spectral parameters are real,
and put $\varphi_0(X)=\varphi(0,X)$. Let $\Phi$ be the restricted
root system of $G/K$. Write $\zeta=\lambda+i\eta$, with
$\lambda,\eta\in\mathfrak a^*$, and set
$\|\zeta\|=(\|\lambda\|^2+\|\eta\|^2)^{1/2}$.
The inner product $\langle\, ,\, \rangle$ is extended complex bilinearly. Define
\begin{equation}\label{eq:spherical-root-majorant}
 \mathcal R_\Phi(\zeta,X)
 :=\sum_{w\in W}\prod_{\alpha\in\Phi^+}
 \left(1+|\alpha(X)|\,
 |\langle w\zeta,\alpha^\vee\rangle|\right)^{-1},
\end{equation}
and put
\begin{equation}\label{eq:complex-support-function}
 h_\Phi(\eta,X):=
 \max_{w\in W}\{-\langle w\eta,X\rangle\}.
\end{equation}
Thus $e^{h_\Phi(\eta,X)}$ is the largest absolute value of the
exponential terms $e^{i\langle w\zeta,X\rangle}$ over $w\in W$.

For real $\lambda\in\mathfrak a^*$, the explicit formula for
spherical functions on complex groups shows that
Theorem~\ref{thm:linear} is equivalent to the pointwise bound
\begin{equation}\label{eq:spherical-bound}
 |\varphi(\lambda,X)|
 \leq C_G\varphi_0(X)\mathcal R_\Phi(\lambda,X).
\end{equation}
See Section~\ref{sec:real-and-spherical-consequences} for details.
An equivalent form of \eqref{eq:spherical-bound} was also recently
established by Brumley, Marshall, Matz, and Peterson
\cite[Theorem~2.5]{BrumleyMarshallMatzPeterson2026}, so their result
also implies Theorem~\ref{thm:linear}.
They obtain the estimate by scaling a bound for spherical functions
on general real semisimple groups, proved using asymptotic expansions
along Levi subgroups, induction, and the maximum principle in the
spectral parameter. Our proof of Theorem~\ref{thm:linear} is very different. It works directly with Weyl alternants, using parabolic double cosets and the relative BGG--Demazure identity. Furthermore, Theorem~\ref{thm:jets} leads to the following bounds for mixed radial derivatives.

\begin{theorem}[Radial derivative bounds for spherical functions]
\label{cor:spherical-derivatives}
For every integer $k\geq0$, there exists $C_{G,k}>0$ such that
\begin{equation}\label{eq:spherical-derivative-bound}
\begin{aligned}
 \left|D_{\xi_1}\cdots D_{\xi_k}
       \varphi(\zeta,\cdot)(X)\right|\leq C_{G,k}(1+\|\zeta\|)^k
 \varphi_0(X)e^{h_\Phi(\operatorname{Im}\zeta,X)}
 \mathcal R_\Phi(\zeta,X)
\end{aligned}
\end{equation}
for all $\zeta\in\mathfrak a^*_{\mathbb C}$, $X\in\mathfrak a$,
and unit vectors $\xi_1,\ldots,\xi_k\in\mathfrak a$.
\end{theorem}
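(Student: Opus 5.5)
We derive the bound from the explicit formula for spherical functions on complex groups, combined with a complex-parameter version of Theorem~\ref{thm:jets}. With suitable normalizations of $\Phi$ and $\mathfrak a$, Harish-Chandra's formula reads
\[
 \varphi(\zeta,X)=c_G\,\frac{\pi(\rho)}{\pi(\zeta)}\,
 \frac{A^\Phi_\zeta(X)}{\prod_{\alpha\in\Phi^+}\sinh\alpha(X)},
 \qquad
 \varphi_0(X)=\prod_{\alpha\in\Phi^+}\frac{\alpha(X)}{\sinh\alpha(X)} ,
\]
where $\pi(\zeta)=\prod_{\alpha\in\Phi^+}\langle\zeta,\alpha^\vee\rangle$ up to a constant. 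Hence
\[
 \varphi(\zeta,X)=c'_G\,\varphi_0(X)\cdot\frac{1}{\pi(\zeta)}\Big(\frac{A^\Phi_\zeta}{\pi}\Big)(X)
\]
for regular $\zeta$. The case of singular $\zeta$ follows by continuity, since both sides of \eqref{eq:spherical-derivative-bound} are continuous in $\zeta$ (the majorant $\mathcal R_\Phi$ is continuous). We then apply the Leibniz rule to $D_{\xi_1}\cdots D_{\xi_k}$ of this product. For the factor $\varphi_0$, write $\log\varphi_0=\sum_{\alpha}g(\alpha(X))$ with $g(t)=\log(t/\sinh t)$. Every derivative of $g$ is bounded on $\R$; for $g'$ this is the bound $|1/t-\coth t|\le1$, and the higher derivatives are similar. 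Faà di Bruno then gives $|D_{\xi_{i_1}}\cdots D_{\xi_{i_j}}\varphi_0(X)|\le C_{j}\varphi_0(X)$. It therefore suffices to prove, for $0\le j\le k$,
\[
 \Big|D_{\xi_1}\cdots D_{\xi_j}\Big(\frac{A^\Phi_\zeta}{\pi}\Big)(X)\Big|
 \le C\,\|\zeta\|^{j}e^{h_\Phi(\eta,X)}\sum_{w\in W}\prod_{\alpha\in\Phi^+}
 \frac{|\langle w\zeta,\alpha^\vee\rangle|}{1+|\alpha(X)||\langle w\zeta,\alpha^\vee\rangle|}.
\]
Indeed, $|\pi(\zeta)|=\prod_\alpha|\langle w\zeta,\alpha^\vee\rangle|$ for every $w$, since $w$ only permutes $\Phi$ up to sign. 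Dividing by it turns each product on the right into the corresponding summand of $\mathcal R_\Phi(\zeta,X)$. Finally, $\|\zeta\|^j\le(1+\|\zeta\|)^k$.

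The displayed estimate is the complexified Theorem~\ref{thm:jets}, and proving it is the main step. I would re-run the proof of Theorem~\ref{thm:jets} with $\lambda$ replaced by $\zeta=\lambda+i\eta$. The combinatorial part should carry over verbatim, because it is algebraic in the spectral parameter and hence holds for complex $\zeta$. This part consists of the parabolic double coset decomposition of $W$ and the relative BGG--Demazure identity (Lemma~\ref{lem: BGG}), which expresses $A_\zeta/\pi$ locally as iterated divided differences. The only analytic input that changes is the size of the exponentials. In the real case $|e^{i\langle\mu,X\rangle}|=1$. In the complex case, the BGG--Demazure/divided-difference representation writes the relevant quantities as averages over simplices of $e^{i\langle\mu,X\rangle}$ times polynomials. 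Here $\mu$ ranges over convex combinations of points of the orbit $W\zeta$, or dually the spatial point ranges over the convex hull of $W$-translates of $X$. Then $|e^{i\langle\mu,X\rangle}|=e^{-\langle\operatorname{Im}\mu,X\rangle}$, and by convexity this is at most $\max_{w}e^{-\langle w\eta,X\rangle}=e^{h_\Phi(\eta,X)}$. The polynomial factors and the derivatives $D_\xi$ produce factors $|\langle w\zeta,\alpha^\vee\rangle|$ and $\|\zeta\|$ exactly as in the real case, with absolute values of complex numbers in place of real ones.

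I expect the main obstacle to be checking that no step of the proof of Theorem~\ref{thm:jets} silently uses reality of $\lambda$. The steps to check are the choice of parabolic subsystem from the sizes $|\alpha(X)||\langle w\lambda,\alpha^\vee\rangle|$, the rank-one bound $|\sin(\omega t)/t|\le|\omega|\min\{1,1/|\omega t|\}$, and the comparison of different $w$-terms. The rank-one step does extend: for complex $\omega$,
\[
 \Big|\frac{\sin\omega t}{t}\Big|\le C\,\frac{|\omega|}{1+|\omega t|}\,e^{|\operatorname{Im}\omega\, t|},
\]
by the integral formula when $|\omega t|\le1$ and the trivial bound otherwise. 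The case analysis depends only on the moduli $|\langle w\zeta,\alpha^\vee\rangle|$, so the same descent should go through with the extra factor $e^{h_\Phi(\eta,X)}$ throughout.
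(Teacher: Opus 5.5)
Your outer reduction matches the paper's proof of this theorem: $\varphi=\varphi_0\,\mathcal E_\zeta^\Phi$, the bound $|D^j\varphi_0|\lesssim\varphi_0$, Leibniz's rule, division by $|\pi^\vee(\zeta)|=\prod_\alpha|\langle w\zeta,\alpha^\vee\rangle|$, and continuity in $\zeta$. The routes differ at the main step, the complex-parameter version of Theorem~\ref{thm:jets}.

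\textbf{How the paper handles the main step.} The paper never re-runs the descent. In Proposition~\ref{prop:complex-normalized-jets} it uses the real-parameter estimate as a black box on the line $z\mapsto\lambda+z\eta$. It multiplies by polynomials $q_w(z)$ whose zeros lie in the lower half-plane and whose moduli reproduce the rootwise factors both on $\mathbb R$ and at $z=i$. It then damps by $e^{ihz}(z+i)^{-L-k}$ and applies the maximum principle on the upper half-plane. That argument is soft and modular. Your route, once completed, is self-contained and would also give complex-parameter local estimates. But it does not carry over verbatim.

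\textbf{The gap: spectral separation.} The claim that the case analysis depends only on the moduli $|\langle w\zeta,\alpha^\vee\rangle|$ fails in Lemma~\ref{lem:spectral}. That lemma rests on reducing to dominant regular $\lambda$ with $\lambda_i>0$. Since $\langle\lambda,\beta^\vee\rangle=\sum_i n_i\lambda_i$ with $n_i\ge0$, there is no cancellation, so one large simple coordinate in the support of $\beta$ forces $|\langle\lambda,\beta^\vee\rangle|\asymp M$. Complex coordinates can cancel.

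In $A_2$, take $\langle\zeta,\alpha_1^\vee\rangle=1+iN$ and $\langle\zeta,\alpha_2^\vee\rangle=1-iN$. Then $\zeta$ is regular and $\operatorname{Re}\zeta=\rho$ is even regular dominant. Both simple coordinates have modulus $\asymp N\asymp\|\zeta\|$, so the lemma's recipe gives $J=\varnothing$. Yet $\langle\zeta,(\alpha_1+\alpha_2)^\vee\rangle=2$. So \eqref{eq:top-frequency} fails. With it fail the spectral sizes $\asymp M$ for root classes 1 and 4 in Lemma~\ref{lem:four-class-estimates}, on which the matching in Lemma~\ref{lem:deleted-factor-matching} depends.

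\textbf{Repairing the spectral step.} Choose the small set by a gap among all moduli $|\langle\zeta,\beta^\vee\rangle|$, $\beta\in\Phi^+$; the largest of these is $\asymp\|\zeta\|$. Suppose the gap ratio exceeds a constant bounding the coefficients of any coroot in a basis of coroots drawn from the small set. Then every root in the span of the small set has pairing below the next value. Hence the small set equals $\Phi\cap\operatorname{span}(\text{itself})$, so it is a parabolic subsystem. You can then conjugate it to a standard $\Phi_J$ using the independent spectral Weyl action. This gives complex analogues of \eqref{eq:lower-frequency}, \eqref{eq:top-frequency} and $m\le C\eps M$.

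\textbf{Repairing the exponential weight.} The factor $e^{h_\Phi(\operatorname{Im}\zeta,X)}$ must be built into the inductive statement, Proposition~\ref{prop:local-jets}. The integrand is not a simplex average of pure exponentials: the normalized lower-rank alternant $h_Q$ is bounded inductively, not expanded. At $u^{-1}Y$ the induction contributes $e^{h_{\Phi_Q}(\eta_Q,u^{-1}Y)}$. This combines with $|e_u(Y)|=e^{-\langle\eta_Q^\perp,u^{-1}Y\rangle}$ to give $\max_{q\in Q}e^{-\langle uq\eta,Y\rangle}$. Since $Y\in\operatorname{conv}(PX)$, this is at most $e^{h_\Phi(\eta,X)}$. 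That is where your convexity remark actually enters. The orthogonal component $\eta_\perp$ in the last step of Section~5 is handled the same way, since $W$ fixes $\eta_\perp$.

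With these two repairs the rest of Sections~4--5 goes through with moduli in place of absolute values.
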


In the setting of
Theorem~\ref{cor:spherical-derivatives}, Cowling and Nevo
\cite[Theorem~1.2]{CowlingNevo2001} estimate derivatives of every
order along fixed regular rays $X=tH$. Their theorem expresses the oscillatory gain
through the single factor
$(1+t\|\zeta\|)^{-\gamma_\Phi}$.
Their Corollary~2.5 also gives stronger decay when the spectral
direction stays away from root hyperplanes.
Our Theorem~\ref{cor:spherical-derivatives} refines both
Theorem~1.2 and Corollary~2.5 of Cowling and Nevo
by retaining the individual spatial and spectral root factors
uniformly across root hyperplanes and by allowing arbitrary
mixed radial derivatives.

The rootwise form of \eqref{eq:spherical-bound} admits a natural
interpretation in terms of stationary phase through the Harish-Chandra integral
formula \cite{HarishChandra1958}. For regular spatial and spectral parameters, the
critical orbits in the integral formula are indexed by the Weyl group,
and the transverse Hessian decomposes into root-space blocks. Since
every restricted root of a complex group has multiplicity two,
stationary phase predicts one inverse factor for each positive root.
The difficulty in obtaining a uniform result is that the critical set
changes and may become higher-dimensional when either parameter reaches
a root hyperplane.

For spherical functions on general symmetric spaces of noncompact type,
Duistermaat, Kolk, and Varadarajan obtained stationary-phase asymptotic
expansions
\cite[Theorem~9.1]{DuistermaatKolkVaradarajan1983} and proved similar
upper bounds that are uniform for all spectral parameters when the
spatial variable ranges over a fixed compact set; these bounds are sharp
when the compact set is equisingular but generally become coarser otherwise
\cite[Theorem~11.1]{DuistermaatKolkVaradarajan1983}.
Marshall proved the corresponding product bound uniformly as the
spatial variable ranges over a fixed compact set, under the assumption
that the spectral direction remains in a compact subset of the regular
set
\cite[Theorem~1.3]{Marshall2016}. On the other hand, Blomer and Pohl \cite[Theorem~2]{BlomerPohl2016}, and Matz and Templier \cite[Proposition~8.2]{MatzTemp2021} established, also for general semisimple
groups, a bound that is uniform on bounded spatial sets and for all real
spectral parameters, although with a nonsharp decay exponent. For the rank-two group $\mathrm{SL}(3,\mathbb R)$, the first author
obtained a rootwise estimate on bounded spatial sets that remains
uniform as either the spatial variable or the spectral parameter
approaches its corresponding singular set
\cite[Theorem~1.1]{Li2025}. For complex semisimple groups, \eqref{eq:spherical-bound} and
\eqref{eq:spherical-derivative-bound} remove the preceding
spatial and spectral restrictions and give the rootwise decay
predicted by stationary phase for spherical functions and
their mixed radial derivatives, uniformly in the spatial variable and the spectral parameter. 

\subsection{Characters of compact Lie groups}
\label{subsec:intro-characters}

Let $U$ be a compact connected semisimple Lie group. Under the
identification $U\simeq (U\times U)/\operatorname{diag}U$, the
normalized irreducible characters of $U$ are precisely the zonal
spherical functions of this compact symmetric pair.

For spherical functions on compact symmetric spaces, Clerc obtained stationary phase
asymptotics for spatial variables in compact subsets of the regular set,
with the spectral direction also restricted to a compact subset of the
regular set
\cite[Theorem~3.4]{Clerc1988}. Under the same restriction on the
spectral direction, Marshall proved the corresponding product bound when the
spatial variable lies near the base point
\cite[Theorem~1.6]{Marshall2016}.

We now present the analogue of
Theorem~\ref{cor:spherical-derivatives} for characters of compact groups.
In this setting, our bounds remove the preceding restrictions
on the spatial variable and the spectral direction: they hold
on the entire maximal torus, uniformly over all dominant
highest weights. They also apply to arbitrary mixed radial
derivatives. We first record the periodic alternant estimate
and then derive the corresponding bounds for characters.

Periodicity in the spatial variable replaces distance to a linear root hyperplane by
distance to the corresponding affine root hyperplanes.  Let
$T$ be a maximal torus of $U$, let $\Phi$ be its root system, and let
$\mathfrak t$ be the Lie algebra of $T$.
For $\alpha\in\Phi$ put
\[
 d_\alpha(H)=\abs{\sin\frac{\alpha(H)}2},\qquad H\in\mathfrak t.
\]
A weight $\lambda\in\mathfrak t^*$ is called regular integral if
$\langle\lambda,\alpha^\vee\rangle\in\mathbb Z\setminus\{0\}$ for every
root $\alpha$.
The following result is a periodic analogue of
Theorem~\ref{thm:linear} and follows from it by localization relative to
the affine Weyl arrangement. It generalizes the type \(A_2\) estimate
proved in \cite[Theorem~1.1]{ZhangSU3}.

\begin{theorem}[Periodic Weyl alternant estimate]
\label{thm:periodic}
There is a constant $C_\Phi$ such that, for every regular integral weight $\lambda\in\mathfrak{t}^*$ and every $H\in\mathfrak t$,
\begin{equation}\label{eq:periodic-main}
 \abs{A_\lambda^\Phi(H)}
 \le C_\Phi\sum_{w\in W}
 \prod_{\alpha\in\Phi^+}
 \min\!\left\{1,\,
   d_\alpha(H)
   \abs{\ip{w\lambda}{\alpha^\vee}}
 \right\}.
\end{equation}
\end{theorem}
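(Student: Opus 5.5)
We deduce Theorem~\ref{thm:periodic} from Theorem~\ref{thm:linear} by localizing relative to the affine Weyl arrangement. The two structural inputs are periodicity and symmetry. For regular integral $\lambda$, the function $A_\lambda^\Phi$ is invariant under translation by $2\pi$ times the coroot lattice $Q^\vee$, since $\langle w\lambda,2\pi\alpha^\vee\rangle\in2\pi\Z$. Both sides of \eqref{eq:periodic-main} are also invariant under this translation, and both are $W$-invariant in absolute value. The right-hand side is $W$-invariant because $w$ permutes $\Phi$ up to sign, $d_{-\alpha}=d_\alpha$, and the sum over $w\in W$ absorbs the relabeling. Hence it suffices to prove the bound for $H$ in a fundamental alcove closure $\bar{\mathcal A}$ scaled by $2\pi$, which is compact. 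Equivalently, we may take $H$ in a fixed bounded fundamental domain for the affine Weyl group $W_{\mathrm{aff}}=W\ltimes 2\pi Q^\vee$.

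Cover $\bar{\mathcal A}$ by finitely many small neighborhoods $B_v$ of its vertices $v$. Each point of $\bar{\mathcal A}$ lies within a fixed small distance of some vertex, and we handle each $B_v$ separately. Let $\Phi_v=\{\alpha\in\Phi:\alpha(v)\in2\pi\Z\}$. This is a root subsystem, the root system of the stabilizer $W_v\subset W_{\mathrm{aff}}$ of $v$, and its linear part is a reflection subgroup $W'\subset W$. For $\alpha\in\Phi_v$ and $H=v+Y$ with $Y$ small, we have $d_\alpha(H)=|\sin(\alpha(Y)/2)|\asymp|\alpha(Y)|$. For $\alpha\notin\Phi_v$, $d_\alpha(H)$ is bounded below by a constant $c>0$ on $B_v$, provided the neighborhoods are chosen small enough.

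Expand $A_\lambda^\Phi(v+Y)$ over cosets $W'\backslash W$, writing $w=uw_0$ with $u\in W'$ and $w_0$ a coset representative. For $u\in W'$, the vector $uw_0\lambda-w_0\lambda$ lies in the span of $\Phi_v$, and $v$ pairs with it into $2\pi\Z$ via $e^{i\langle uw_0\lambda,v\rangle}=e^{i\langle w_0\lambda,v\rangle}$. This holds because $u^{-1}v\equiv v$ modulo $2\pi Q^\vee$ for the linear part of the stabilizer, together with integrality of $\lambda$. We obtain
\[
A_\lambda^\Phi(v+Y)=\sum_{w_0}\det(w_0)e^{i\langle w_0\lambda,v\rangle}A^{\Phi_v}_{w_0\lambda}(Y),
\]
where $A^{\Phi_v}$ is the alternant of the subsystem, restricted to $\operatorname{span}\Phi_v$ plus a phase from the orthogonal component. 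The orthogonal component contributes a unimodular factor, since $e^{i\langle u\mu,Y\rangle}=e^{i\langle u\mu,Y'\rangle}e^{i\langle\mu,Y''\rangle}$ for $Y=Y'+Y''$ with $Y''\perp\Phi_v$.

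Apply Theorem~\ref{thm:linear} to $\Phi_v$, which is reduced and crystallographic, with positive system $\Phi_v\cap\Phi^+$ up to sign. This bounds each term by
\[
C\sum_{u\in W'}\prod_{\alpha\in\Phi_v^+}\min\{1,|\alpha(Y)||\langle uw_0\lambda,\alpha^\vee\rangle|\}.
\]
For $\alpha\in\Phi^+\setminus\Phi_v$, we have $d_\alpha(H)\ge c$, and $|\langle w\lambda,\alpha^\vee\rangle|\ge1$ by regular integrality. So the corresponding factor $\min\{1,d_\alpha(H)|\langle w\lambda,\alpha^\vee\rangle|\}\ge c$, and we may insert these factors freely at the cost of a constant $c^{-|\Phi^+|}$. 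Replacing $|\alpha(Y)|$ by $d_\alpha(H)$ for $\alpha\in\Phi_v$ costs another constant. Summing over $w_0$ and $u$ gives \eqref{eq:periodic-main}. Note that the roots in $\Phi_v$ may be positive or negative in $\Phi^+$, but this is harmless since only absolute values enter.

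The main obstacle is the phase bookkeeping in the coset expansion: checking that $e^{i\langle uw_0\lambda,v\rangle}$ is independent of $u\in W'$. Here $W'$ must be identified with the reflection group generated by $s_\alpha$, $\alpha\in\Phi_v$, acting linearly about $v$. I would verify this first for a generator: $\langle s_\alpha\mu,v\rangle=\langle\mu,v\rangle-\langle\mu,\alpha^\vee\rangle\alpha(v)$, where $\langle\mu,\alpha^\vee\rangle\in\Z$ and $\alpha(v)\in2\pi\Z$. This suffices and is clean. A subsidiary point is that $\Phi_v$ might be reducible or of lower rank. Theorem~\ref{thm:linear} handles this, since its proof applies to any reduced crystallographic system, with an extra unimodular factor from the orthogonal directions.
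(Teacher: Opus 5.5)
Your overall architecture matches the paper's. You reduce to $\overline{\mathcal A}$ by $W_{\rm aff}$-invariance and split $W$ into right cosets of the local Weyl group. Integrality makes the phase at the base point constant on each coset; your generator computation with $s_\alpha$, $\alpha\in\Phi_v$, is a valid substitute for the paper's affine-lift argument. You then apply Theorem~\ref{thm:linear} to the subsystem and insert the missing factors via $d_\alpha(H)\ge c$ and $|\langle w\lambda,\alpha^\vee\rangle|\ge1$.

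The gap is in the localization, not the phase bookkeeping. Small neighborhoods of the vertices do not cover $\overline{\mathcal A}$: the barycenter, or the midpoint of an edge, stays at a fixed distance from every vertex. You also cannot just enlarge the $B_v$, because the two properties you need then break: $|\alpha(H-v)|\lesssim d_\alpha(H)$ for $\alpha\in\Phi_v$, and $d_\alpha(H)\ge c$ for $\alpha\notin\Phi_v$. Even assigning each point to its nearest vertex fails. Take $C_2$ with $\overline{\mathcal A}=\{\pi\ge x\ge y\ge0\}$ and roots $\pm e_1\pm e_2,\pm2e_1,\pm2e_2$. The point $H=(\pi/2+\varepsilon,\pi/2-\varepsilon)$ is strictly closest to the vertex $v=(\pi,0)$, with $\Phi_v=\{\pm2e_1,\pm2e_2\}$, yet $d_{e_1-e_2}(H)=\sin\varepsilon\to0$. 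Likewise, with $v=0$ and $H=(\pi-\varepsilon,0)$, one has $|2e_1(H-v)|=2\pi-2\varepsilon$ while $d_{2e_1}(H)=\sin\varepsilon$.

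The base point has to be chosen according to which affine root hyperplanes are close to $H$; this is Lemma~\ref{lem:affine-local}. Let $F$ be the common zero set of all affine forms $\alpha-2\pi k$ with $|\alpha(H)-2\pi k|\le\delta_\Phi$. It is nonempty for small $\delta_\Phi$ by compactness of $\overline{\mathcal A}$ and finiteness of the relevant families. $F$ can have any dimension, with $F=\mathfrak t$ and $\Phi_F=\varnothing$ deep inside the alcove. Take the base point to be the orthogonal projection $H_0$ of $H$ onto $F$. Then $X=H-H_0$ is small, $d_\alpha(H)\asymp|\alpha(X)|$ on $\Phi_F$, and $d_\alpha(H)\ge c$ off $\Phi_F$. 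The rest of your argument then goes through verbatim with $v$ replaced by $H_0$.

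If you want to keep vertices, take $v$ to be a vertex of the face $F\cap\overline{\mathcal A}$, which is nonempty after shrinking $\delta_\Phi$. Then $\Phi_F\subset\Phi_v$, so $d_\alpha(H)\ge\sin(\delta_\Phi/2)$ for $\alpha\notin\Phi_v$. For $\alpha\in\Phi_v$, $\overline{\mathcal A}$ lies in a slab $2\pi j\le\alpha\le2\pi(j+1)$ with $\alpha(v)$ an endpoint. If $|\alpha(H-v)|>2\pi-\delta_\Phi$, the other endpoint hyperplane would be $\delta_\Phi$-close to $H$ and hence contain $F\ni v$, a contradiction. This gives the one-sided bound $|\alpha(H-v)|\le C\,d_\alpha(H)$, which is all Theorem~\ref{thm:linear} requires, even though $H-v$ is not small.
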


A weight $\mu$ in the character lattice of $T$ is dominant if
$\langle\mu,\alpha^\vee\rangle\geq0$ for every simple root $\alpha$.
Let $\chi_\mu$ be the character of the corresponding irreducible
representation of $U$. By the Weyl character formula,
\[
\chi_\mu(\exp H)=\frac{A^\Phi_{\mu+\rho}(H)}{A^\Phi_{\rho}(H)},
\]
where $\rho$ denotes the half-sum of positive roots. The absolute value
of the Weyl denominator is
\[
|A^\Phi_\rho(H)|
=
2^{|\Phi^+|}
\prod_{\alpha\in\Phi^+}
d_\alpha(H).
\]
Dividing \eqref{eq:periodic-main} by the Weyl denominator and using
$\min\{1,t\}\asymp t/(1+t)$ for $t\geq 0$ yields the $k=0$ case of the character
estimate below. 
More generally, applying the same localization argument to
Theorem~\ref{thm:jets} and using the Weyl character formula yields
the bounds for all mixed radial derivatives.

\begin{theorem}[Radial derivative bounds for characters]
\label{cor:character}
For every integer $k\geq0$, there is a constant $C_{U,k}$
such that, for every dominant weight $\mu$ in the character
lattice of $T$, every $H\in\mathfrak t$, and all unit
vectors $\xi_1,\ldots,\xi_k\in\mathfrak t$,
\begin{align}
 \left|
 D_{\xi_1}\cdots D_{\xi_k}
 (\chi_\mu\circ\exp)(H)
 \right|
 \le C_{U,k}\|\mu+\rho\|^k\sum_{w\in W}
 \prod_{\alpha\in\Phi^+}
 \frac{\abs{\ip{w(\mu+\rho)}{\alpha^\vee}}}{1+ d_\alpha(H)\abs{\ip{w(\mu+\rho)}{\alpha^\vee}}}.
\label{eq:character-main}
\end{align}
\end{theorem}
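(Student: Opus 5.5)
We prove Theorem~\ref{cor:character} by localizing Theorem~\ref{thm:jets} near the affine root hyperplanes and using the Weyl character formula in the form $\chi_\mu\circ\exp = A^\Phi_{\mu+\rho}/A^\Phi_\rho$. Put $\lambda=\mu+\rho$; it is regular integral. Both sides of \eqref{eq:character-main} are invariant under translation by the coroot lattice $Q^\vee$ and under the Weyl group acting on $H$: the right side is $W$-invariant after reindexing the sum over $w$, and $d_\alpha$ is $2\pi$-periodic in $\alpha(H)$. The left side is invariant because $\chi_\mu\circ\exp$ is. Hence it suffices to bound the left side for $H$ in a fixed compact fundamental domain, namely a closed alcove $\bar{\mathcal A}$, enlarged slightly.

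\textbf{Step 1 (local cover).} Cover $\bar{\mathcal A}$ by finitely many small balls $B(p,\delta)$. Each ball is centered at a point $p$ such that the affine root hyperplanes meeting $B(p,2\delta)$ all pass through $p$. The roots $\alpha$ with $\alpha(p)\in 2\pi\mathbb Z$ form a closed subsystem $\Phi_p$ with Weyl group $W_p$; it is the stabilizer of $p$ in the affine Weyl group, since the stabilizer is generated by reflections. Write $H=p+Y$ with $\|Y\|<\delta$. Then for $\alpha\in\Phi_p$ we have $d_\alpha(H)\asymp|\alpha(Y)|$, and for $\alpha\notin\Phi_p$ we have $d_\alpha(H)\asymp 1$.

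\textbf{Step 2 (factorization).} Translating by an element of the coweight lattice and conjugating by $W$ if needed, we can make $p$ a point fixed by $W_p$. Split $W=\bigsqcup_{u\in W_p\backslash W} W_p u$, taking $u$ to be minimal coset representatives. Then
\[
A^\Phi_\lambda(p+Y)=\sum_{u}\det(u)\,e^{i\langle u\lambda,p\rangle}A^{\Phi_p}_{u\lambda}(Y),
\]
where $A^{\Phi_p}$ is the alternant of $\Phi_p$ with positive system $\Phi_p\cap\Phi^+$. Here we use that $e^{i\langle vu\lambda,p\rangle}=e^{i\langle u\lambda,p\rangle}$ for $v\in W_p$, which holds because $\lambda$ is integral and $vp-p\in Q^\vee$. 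Similarly $A^\Phi_\rho(p+Y)=\pi_p(Y)\,g(Y)$, where $\pi_p=\prod_{\alpha\in\Phi_p^+}\alpha$ and $g$ is smooth and nonvanishing on $B(0,2\delta)$ with all derivatives bounded. This follows from the product formula for the Weyl denominator, since each $\sin(\alpha(H)/2)$ with $\alpha\in\Phi_p$ equals $\pm\sin(\alpha(Y)/2)=\pm\alpha(Y)\,(\text{smooth, nonzero})$. Consequently
\[
\chi_\mu(\exp(p+Y))=g(Y)^{-1}\sum_u\pm e^{i\langle u\lambda,p\rangle}\,\frac{A^{\Phi_p}_{u\lambda}}{\pi_p}(Y).
\]

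\textbf{Step 3 (apply Theorem~\ref{thm:jets}).} Expand the derivatives by the Leibniz rule. Derivatives falling on $g^{-1}$ contribute $O(1)$. Theorem~\ref{thm:jets}, applied to $\Phi_p$ with $j\le k$ derivatives, gives the bound
\[
\|\lambda\|^j\sum_{v\in W_p}\prod_{\alpha\in\Phi_p^+}\frac{|\langle vu\lambda,\alpha^\vee\rangle|}{1+|\alpha(Y)|\,|\langle vu\lambda,\alpha^\vee\rangle|},
\]
where $\alpha^\vee$ is the same coroot, since $\Phi_p\subset\Phi$. It remains to compare this with the full product over $\Phi^+$ for $w=vu$. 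For $\alpha\in\Phi_p^+$ we use $d_\alpha(H)\asymp|\alpha(Y)|$, and the factors agree up to constants. For $\alpha\in\Phi^+\setminus\Phi_p$ we have $d_\alpha\asymp1$, so the factor in \eqref{eq:character-main} is $|\langle w\lambda,\alpha^\vee\rangle|/(1+c|\langle w\lambda,\alpha^\vee\rangle|)\gtrsim 1$, because $\lambda$ is regular integral and so $|\langle w\lambda,\alpha^\vee\rangle|\ge1$. The missing factors are therefore bounded below, and inserting them costs only a constant. Summing over the finitely many $u$ and balls, and using $\|\lambda\|^j\le\|\lambda\|^k$ (since $\|\lambda\|\ge\|\rho\|>0$), gives \eqref{eq:character-main}.

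\textbf{Main obstacle.} The main obstacle is the factorization in Step 2: showing that the phases $e^{i\langle w\lambda,p\rangle}$ are constant on cosets $W_pu$. This requires choosing $p$ so that $W_p$, acting linearly after translation, fixes $p$ modulo $Q^\vee$. It also requires checking that $\Phi_p\cap\Phi^+$ is a positive system whose sign conventions match $\det$, which holds up to an overall sign that is harmless. I would first handle this by moving $p$ to a vertex-type point of the form $p=2\pi\varpi$ with $\varpi$ in the coweight lattice intersected with a face of the alcove. If that is not directly available, I would absorb the affine part into the translation and reduce to the linear stabilizer via the standard description of the isotropy groups of the affine Weyl group.
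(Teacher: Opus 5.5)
Your proof is correct in substance and follows the paper's argument. You localize at a point of an affine flat and split $W$ into right cosets of the local Weyl group $W_p$ (the paper's $W_F$). You use integrality of $\lambda=\mu+\rho$ to pull out a phase that is constant on each coset, factor the Weyl denominator as $\pi_p\,g$ with $g^{-1}$ having bounded derivatives, apply Theorem~\ref{thm:jets} to $\Phi_p$, and reinsert the roots outside $\Phi_p$ using $|\langle w\lambda,\alpha^\vee\rangle|\ge1$.

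The main difference from the paper is where you expand. You work about a fixed center $p$ of a compactness cover, so your phase $e^{i\langle u\lambda,p\rangle}$ is constant. The component of $u\lambda$ orthogonal to $\operatorname{span}\Phi_p$ is then absorbed by Theorem~\ref{thm:jets} itself, which allows non-spanning root systems and arbitrary $\lambda$. The paper instead projects orthogonally onto a flat via Lemma~\ref{lem:affine-local}. Its phase $e^{i\langle u\Lambda,\operatorname{pr}_F(Y)\rangle}$ therefore varies, and each derivative falling on it costs $M$. Both routes give the same bound.

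Two points need correcting.

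First, your ``main obstacle'' is not an obstacle, and the normalization you propose for it is false in general. Take $\Phi=B_2$ and the alcove vertex $p=(\pi,\pi)$. Then $\Phi_p=\{\pm e_1\pm e_2\}$, so $-1\in W_p$. But $p\notin 2\pi P^\vee=2\pi\mathbb Z^2$, so no $W$-conjugate of a $2\pi P^\vee$-translate of $p$ is $0$, the only point fixed by $-1$. Likewise, moving $p$ to some $2\pi\varpi$ with $\varpi\in P^\vee$ would force $\Phi_p=\Phi$. All you actually need is $vp-p\in 2\pi Q^\vee$ for $v\in W_p$ (note the factor $2\pi$, which you also dropped in the periodicity claim). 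This holds automatically:
\begin{itemize}
\item $s_\alpha p-p=-\alpha(p)\alpha^\vee\in 2\pi Q^\vee$ for $\alpha\in\Phi_p$;
\item $v_1v_2p-p=v_1(v_2p-p)+(v_1p-p)$, and $W$ preserves $Q^\vee$.
\end{itemize}
This is the paper's affine-lift argument. So drop the first sentence of Step~2 and keep the justification you give after the display.

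Second, Step~1 cannot use a single radius $\delta$ for all centers. In rank at least two, some points at distance comparable to $\delta$ from a vertex, and close to a wall through it, lie in no admissible ball. Let the radius depend on the center, for example
$\delta_H=\min\{\delta_0,\tfrac13\operatorname{dist}(H,\bigcup_{H\notin\mathcal H_{\alpha,k}}\mathcal H_{\alpha,k})\}$ with $\delta_0$ small. Then extract a finite subcover by compactness; finiteness keeps all constants uniform.
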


For $k=0$, \eqref{eq:character-main} refines earlier character
bounds of Hare \cite{Hare1998,HareWilsonYee2000}; see
Remark~\ref{rem:character-comparison}.
Possible applications to sharp $L^p$ bounds for characters are
discussed in Remark~\ref{rem:character-Lp}.

\subsection{Proof strategy and organization}
\label{subsec:intro-strategy}

The proof of Theorem~\ref{thm:jets} builds on the second
author's earlier work on $\mathrm{SU}(3)$ \cite{ZhangSU3}.
In that work, the Weyl alternant is decomposed into right-coset
blocks adapted to the spatial singularity and then regrouped
according to the spectral singularity. This amounts to a parabolic double-coset decomposition,
which is central to the approach we develop here in arbitrary rank.

After reducing to regular dominant $\lambda$ and dominant $X$,
a gap among the simple-coroot spectral coordinates determines
a spectral parabolic subgroup $Q$. Its associated root
subsystem $\Phi_Q$ has smaller rank and provides the input
for the induction. On the spatial side, the uncertainty
principle suggests the reciprocal scale $\|\lambda\|^{-1}$:
the simple roots satisfying
$\alpha_i(X)\leq\|\lambda\|^{-1}$ determine a spatial
parabolic subgroup $P$. Then we decompose $W$ into double cosets $PuQ$, choosing $u$ of 
minimal length in each double coset. 

As a crucial algebraic step, we develop the relative
BGG--Demazure identity \eqref{eq:relative-BGG} and apply it
to each double-coset block of the Weyl alternant. It extracts the root product associated
with $P$ from each alternant block and leaves a
divided-difference operator acting on an exponential phase
times a normalized lower-rank alternant for $\Phi_Q$; see
Lemma~\ref{lem:exact-block}.

The integral representation of divided-difference operators then provides the analytic link to the
inductive hypothesis: using the fundamental theorem of calculus,
we convert the relative divided-difference operator into an
integral of directional derivatives. Derivatives falling on the
exponential factor contribute powers of $\|\lambda\|$,
while those falling on the normalized lower-rank alternant are
controlled by the inductive hypothesis. 
The minimality of each double-coset representative $u$ gives, in particular, the precise
derivative count in the relative BGG--Demazure identity
needed for the rootwise estimates. 
This proves the
local derivative estimate in Proposition~\ref{prop:local-jets}.
Dividing by the remaining root product and applying
Leibniz's rule then gives Theorem~\ref{thm:jets}.

The paper is organized as follows.
Section~2 records the required facts about parabolic double
cosets. Section~3 develops the relative BGG--Demazure
identity and the integral representation of
divided-difference operators. Section~4 derives the exact
block formula, and Section~5 proves the local derivative
estimate by induction on the rank.
Section~\ref{sec:real-and-spherical-consequences} proves
Theorem~\ref{thm:jets}, derives the estimates for spherical
functions with complex spectral parameters, and compares
them with the Cowling--Nevo estimate. 
Section~7 uses localization near affine root hyperplanes
to prove the periodic alternant and character estimates.

\subsection*{Notation}
For nonnegative quantities $A$ and $B$, we write
$A\lesssim_{\Phi,k}B$ if $A\leq C_{\Phi,k}B$ for some
positive constant depending on $\Phi$ and $k$.
We write $A\asymp_{\Phi,k}B$ if both
$A\lesssim_{\Phi,k}B$ and $B\lesssim_{\Phi,k}A$ hold. Other subscripts similarly indicate 
the dependence of the implicit constants. Constants denoted by $C$ or $c$, with appropriate 
subscripts, may change from line to line.  

\subsection*{Acknowledgments}

We would like to thank Simon Marshall for helpful discussions about the results on spherical functions in \cite{BrumleyMarshallMatzPeterson2026}. Y. Zhang gratefully acknowledges the hospitality of both the University of Cincinnati and the University of Mississippi during his work on this project. This project is partially supported by the National Key R\&D Program of China under Grant No.~2022YFA1006700. X. Li is also supported by the National Natural Science Foundation of China under Grant No.~12501039.

\subsection*{Disclosure on AI use}

This project grew out of the second author's earlier work on $\mathrm{SU}(3)$ \cite{ZhangSU3}. The authors used GPT-5.6 Sol to help extend the descent strategy of \cite{ZhangSU3} to arbitrary rank. The AI model suggested using an iterated integral representation of divided-difference operators, which helped the authors set up the rank induction in Proposition~\ref{prop:local-jets}. It also suggested using the maximum principle to extend the estimates from real to complex spectral parameters. This argument appears in the proof of Proposition~\ref{prop:complex-normalized-jets} and provides the extension to complex spectral parameters in Theorem~\ref{cor:spherical-derivatives}. The authors also used the model to revise the manuscript’s wording.

The authors take full responsibility for the contents of the manuscript.

\section{Parabolic double cosets of the Weyl group}
\label{sec: double cosets}
We first recall the standard structure of one-sided parabolic cosets and their minimal representatives. We then pass to parabolic double cosets and record the existence of minimal representatives, the parabolic structure of the relevant subgroup intersections, and the factorization of each double coset used later in decomposing the Weyl alternant.

\subsection{One-sided parabolic cosets}
We recall the standard facts about one-sided parabolic cosets. Let $\Phi$ be a reduced crystallographic root system, let $W$ be the associated Weyl group, and let $\Delta$ be the chosen set of simple roots. For $I\subset\Delta$,
let
\(
W_I=\langle s_\alpha:\alpha\in I\rangle
\)
be the corresponding standard parabolic subgroup of $W$, let
$
\Phi_I=\Phi\cap\operatorname{span}_{\mathbb R}(I)$ denote the corresponding root subsystem, and let $\Phi_I^+=\Phi_I\cap\Phi^+$
denote the positive subsystem.
Let \(\ell:W\to\mathbb Z_{\geq0}\) denote the length function with respect to the simple reflections corresponding to \(\Delta\). 
Define
\[
 W^I
 :=
 \{x\in W:\ell(xs_\alpha)>\ell(x)
                  \text{ for every }\alpha\in I\},
\]
and
\[
 {}^I W
 :=
 \{x\in W:\ell(s_\alpha x)>\ell(x)
                  \text{ for every }\alpha\in I\}.
\]
By the standard length--root criterion
\cite[\S 1.7]{Humphreys},
\begin{align}\label{eq: length-root}
x\in W^I
\quad\Longleftrightarrow\quad
x\Phi_I^+\subset\Phi^+,
\qquad
x\in{}^I W
\quad\Longleftrightarrow\quad
x^{-1}\Phi_I^+\subset\Phi^+.
\end{align}
The following lemma is standard;
see \cite[\S 1.10]{Humphreys}.

\begin{lemma}[One-sided parabolic decomposition]\label{lem:one-sided-cosets}
\leavevmode\par
\begin{enumerate}

\item Every $w\in W$ has unique factorizations
\[
 w=xv,
 \qquad x\in W^I,\quad v\in W_I,
\]
and
\[
 w=py,
 \qquad p\in W_I,\quad y\in{}^I W,
\]
and these factorizations are length-additive:
\[
 \ell(xv)=\ell(x)+\ell(v),
 \qquad
 \ell(py)=\ell(p)+\ell(y).
\]

\item Every left coset $wW_I$ has a unique minimal-length element
$x\in W^I$ and a unique maximal-length element $xw_I$, where $w_I$
is the longest element of $W_I$.  
In particular, if $w_0$ denotes the longest element in $W$, then $w_0=xw_I$ for some $x\in W^I$.
Similarly, every right coset
$W_Iw$ has a unique minimal-length element $y\in{}^I W$ and a
unique maximal-length element $w_Iy$, and $w_0=w_Iy$ for some $y\in {}^I W$. 
\end{enumerate}
\end{lemma}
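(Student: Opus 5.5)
The plan is to follow the standard Coxeter-group proof, as in \cite[\S 1.10]{Humphreys}, using only the exchange property and the length--root criterion \eqref{eq: length-root}. It suffices to treat left cosets $wW_I$. The right-coset statements follow by applying the left-coset ones to $w^{-1}$, because inversion preserves length and interchanges $W^I$ with ${}^IW$.

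\emph{Existence of the factorization.} Take $x$ of minimal length in $wW_I$ and set $v=x^{-1}w\in W_I$. Minimality gives $\ell(xs_\alpha)>\ell(x)$ for all $\alpha\in I$, so $x\in W^I$. For length additivity, I would prove $\ell(xv)=\ell(x)+\ell(v)$ for all $x\in W^I$ and $v\in W_I$ by induction on $\ell(v)$. Write $v=v's_\beta$ with $\beta\in I$ and $\ell(v')=\ell(v)-1$. Then $v'\beta\in\Phi_I^+$, since $\ell(v's_\beta)>\ell(v')$ and $v'$ preserves $\Phi_I$. By \eqref{eq: length-root}, $xv'\beta\in\Phi^+$, so $\ell(xv's_\beta)=\ell(xv')+1$. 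Applying the inductive hypothesis to $xv'$ gives $\ell(xv)=\ell(x)+\ell(v)$.

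\emph{Uniqueness.} Length additivity yields uniqueness. If $xv=x'v'$ with $x,x'\in W^I$, then $x'=xu$ for some $u\in W_I$. Additivity applied to $x'=xu$ gives $\ell(x')=\ell(x)+\ell(u)$, and applied to $x=x'u^{-1}$ gives $\ell(x)=\ell(x')+\ell(u)$. Together these force $\ell(u)=0$, so $u=1$. In particular, $x$ is the unique minimal element of the coset, since every element $xv$ with $v\neq1$ has length $\ell(x)+\ell(v)>\ell(x)$.

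\emph{Part (2).} By the same additivity, the maximal-length element of $xW_I$ is $xw_I$, because $w_I$ is the unique element of maximal length in $W_I$. For the statement about $w_0$, apply this to the coset $w_0W_I$. The element $w_0$ has maximal length in all of $W$, so it is the maximal element of its own coset, which means $w_0=xw_I$ for the minimal representative $x$ of that coset. The right-coset version follows symmetrically.

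The only nonformal step is the inductive length-additivity argument. It rests on the criterion $\ell(ws_\beta)>\ell(w)\iff w\beta\in\Phi^+$ for simple $\beta$, together with the fact that $W_I$ stabilizes $\Phi_I$ and permutes $\Phi_I^+$ up to sign. Since the paper cites this lemma as standard, I would present it only briefly, with these references.
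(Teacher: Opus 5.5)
Your argument is correct and is essentially the standard proof in \cite[\S 1.10]{Humphreys}; the paper cites that reference and gives no proof of its own. You use one step tacitly: that a nontrivial $v\in W_I$ can be written $v=v's_\beta$ with $\beta\in I$ and $\ell(v')=\ell(v)-1$. This holds because $v$ permutes $\Phi^+\setminus\Phi_I^+$, so $v\neq e$ forces $v\beta\in-\Phi^+$ for some $\beta\in I$ (equivalently, $\ell$ agrees on $W_I$ with the length taken relative to $I$).
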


\subsection{Parabolic double cosets}
We will group the terms in the Weyl alternant according to parabolic double
cosets and use the resulting double-coset sums as the basic units for exhibiting cancellation. This construction reformulates and generalizes to arbitrary rank the right-coset
regrouping used in \cite{ZhangSU3}. We will use the
standard structure of these double cosets, summarized in
the following lemma.  Its ingredients are standard; see
\cite[Chapter~IV, \S1, Exercise~3]{BourbakiLie46} or
\cite[Proposition~2.7 and Corollary~2.8]{BilleyEtAl}.  Since neither
reference states the precise formulation below in full, we include a
proof for completeness.

\begin{lemma}[Parabolic double cosets]
\label{lem:double-cosets}
For $I,J\subset\Delta$, let \(P=W_I\) and \(Q=W_J\) be standard parabolic subgroups of \(W\).
Every double coset \(PuQ\) contains 
an element \(u\) of minimal
length. For this element, set
\[
 R=P\cap uQu^{-1},
 \qquad
 R'=Q\cap u^{-1}Pu=u^{-1}Ru.
\]
Then the following statements hold.
\begin{enumerate}
    \item
    The groups \(R\) and \(R'\) are standard parabolic subgroups of
    \(P\) and \(Q\), respectively.

     \item
    The corresponding root systems satisfy
    \begin{equation}
    \label{eq:intersection-root-systems}
        \Phi_R=\Phi_P\cap u\Phi_Q,
        \qquad
        \Phi_{R'}=\Phi_Q\cap u^{-1}\Phi_P,
        \qquad
        u\Phi_{R'}=\Phi_R.
    \end{equation}
    Their positive systems satisfy
    \begin{equation}
    \label{eq:intersection-positive-systems}
        \Phi_R^+
        =\Phi_P^+\cap u\Phi_Q^+,
        \qquad
        \Phi_{R'}^+
        =\Phi_Q^+\cap u^{-1}\Phi_P^+,
        \qquad
        u\Phi_{R'}^+=\Phi_R^+.
    \end{equation}

    \item 
We have the 
disjoint decomposition
\begin{equation}\label{eq:four-root-classes}
\begin{aligned}
 \Phi^+=
 (\Phi_P^+\setminus\Phi_R^+)
 \sqcup\Phi_R^+ 
 \sqcup u(\Phi_Q^+\setminus\Phi_{R'}^+)
 \sqcup
 \bigl(\Phi^+\setminus(\Phi_P^+\cup u\Phi_Q^+)\bigr).
\end{aligned}
\end{equation}
    \item
    Let \(P^R\) denote the set of minimal representatives of the left
    cosets \(P/R\). Multiplication induces a bijection
    \[
        P^R\times Q\longrightarrow PuQ,
        \qquad
        (p,q)\longmapsto puq.
    \]
\end{enumerate}
\end{lemma}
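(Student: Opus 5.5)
We take \(u\) to be an element of minimal length in \(PuQ\); one exists because lengths are nonnegative integers. The first step is to show that \(u\in W^J\cap{}^I W\). If \(\ell(s_\alpha u)<\ell(u)\) for some \(\alpha\in I\), then \(s_\alpha u\in PuQ\) has smaller length, which contradicts minimality; the same argument applies on the right. Using \eqref{eq: length-root}, we obtain
\[
u\Phi_Q^+\subset\Phi^+
\quad\text{and}\quad
u^{-1}\Phi_P^+\subset\Phi^+ .
\]
These two inclusions are the only consequences of minimality we will need.

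For (1) and (2), set \(K=\{\beta\in I: u^{-1}\beta\in\Phi_J\}\). Our target is \(R=W_K\) and \(\Phi_R=\Phi_K=\Phi_P\cap u\Phi_Q\).

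\begin{itemize}
\item \emph{Root systems.} The set \(\Phi_P\cap u\Phi_Q\) is a closed, symmetric subsystem of \(\Phi_P\). Take a positive root \(\gamma\) in it. Then \(u^{-1}\gamma\in\Phi_Q\), and by the second inclusion \(u^{-1}\gamma\in\Phi^+\), so \(u^{-1}\gamma\in\Phi_Q^+\). Hence \(\Phi_P^+\cap u\Phi_Q\subset u\Phi_Q^+\). The symmetric argument with the first inclusion gives \(u\Phi_Q^+\cap\Phi_P\subset\Phi_P^+\). Together these show that the positive system of \(\Phi_P\cap u\Phi_Q\) induced from \(\Phi^+\) agrees with the one transported by \(u\) from \(\Phi_Q^+\). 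This proves \eqref{eq:intersection-positive-systems}, modulo the identification of \(\Phi_R\).

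\item \emph{Parabolic subsystem.} Write \(\gamma\in(\Phi_P\cap u\Phi_Q)^+\) as a nonnegative combination \(\sum_{\beta\in I}c_\beta\beta\), and apply \(u^{-1}\). Each \(u^{-1}\beta\) is positive, and \(u^{-1}\gamma\in\spanR(J)\). Expanding each \(u^{-1}\beta\) in simple roots, all coefficients are nonnegative, so no cancellation can occur. Therefore every \(\beta\) with \(c_\beta>0\) satisfies \(u^{-1}\beta\in\Phi_J\), that is, \(\beta\in K\). This gives \(\Phi_P\cap u\Phi_Q=\Phi_K\).

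\item \emph{The group \(R\).} Since \(P\cap uQu^{-1}\) is the intersection of two reflection subgroups, it is generated by the reflections it contains. Concretely, for \(w\in P\cap uQu^{-1}\), reduce \(w\) by simple reflections \(s_\beta\) with \(\beta\in I\) and \(w\beta<0\). Each such \(\beta\) lies in \(\Phi_P\cap u\Phi_Q\), because \(u^{-1}wu\in Q\) sends \(u^{-1}\beta\) to a negative root of \(\Phi_Q\) and hence \(u^{-1}\beta\in\Phi_Q\). So every such \(\beta\) lies in \(K\), and induction on \(\ell(w)\) gives \(R=W_K\). Conjugating by \(u\) gives the statements for \(R'\).

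This reduction step is the main obstacle, and it needs care. If \(w\beta<0\) but \(u^{-1}\beta\notin\Phi_Q\), then \(u^{-1}wu\in Q\) permutes \(\Phi^+\setminus\Phi_Q^+\) and so sends the positive root \(u^{-1}\beta\) to a positive root. Hence \(w\beta=u(u^{-1}wu)(u^{-1}\beta)\in u(\Phi^+\setminus\Phi_Q)\). Showing that this contradicts \(w\beta<0\) is the delicate point. I would handle it by the classical route (Bourbaki's exercise; Kilmoyer's theorem): prove \(P\cap uQu^{-1}=W_K\) by comparing lengths, using that \(\ell(pu)=\ell(p)+\ell(u)\) for \(p\in P\), and symmetrically on the right.
\end{itemize}

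For (3), the four sets on the right-hand side of \eqref{eq:four-root-classes} are pairwise disjoint. This follows because \(\Phi_P^+\cap u\Phi_Q^+=\Phi_R^+\) and \(u\Phi_Q^+\subset\Phi^+\). Their union is clearly \(\Phi^+\).

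For (4), write \(p=p'r\) with \(p'\in P^R\) and \(r\in R\), using Lemma~\ref{lem:one-sided-cosets}. Since \(ru=ur'\) with \(r'=u^{-1}ru\in Q\), we get \(puq=p'u(r'q)\), so the map is surjective. For injectivity, suppose \(p_1uq_1=p_2uq_2\). Then \(p_2^{-1}p_1=uq_2q_1^{-1}u^{-1}\) lies in \(P\cap uQu^{-1}=R\). So \(p_1R=p_2R\), which forces \(p_1=p_2\) because both are minimal representatives, and then \(q_1=q_2\).
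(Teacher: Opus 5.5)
Your arguments for the positivity identities in (2), for the decomposition (3), and for the bijection (4) are the same as the paper's. You diverge only on $P\cap uQu^{-1}=W_K$. The paper cites Solomon's intersection lemma and reads off $\Phi_R=\Phi_P\cap u\Phi_Q$ from it. You instead prove $\Phi_P\cap u\Phi_Q=\Phi_K$ directly on roots, and your nonnegative-coefficient argument is correct. You then attempt the group identity by descent and fall back on Kilmoyer/Bourbaki at the step you call delicate. That fallback keeps you at the paper's level of rigor, but the step in fact follows at once from an inclusion you already have.

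Since $w\in P$ and $\beta\in I$, the root $w\beta$ lies in $\Phi_P$. So $w\beta<0$ means $-w\beta\in\Phi_P^+$, and $u^{-1}\Phi_P^+\subset\Phi^+$ gives $u^{-1}w\beta<0$. Suppose the positive root $u^{-1}\beta$ were outside $\Phi_Q$. Then $u^{-1}w\beta=(u^{-1}wu)(u^{-1}\beta)$ would be positive, because $u^{-1}wu\in Q$ permutes $\Phi^+\setminus\Phi_Q$, a contradiction. Hence $u^{-1}\beta\in\Phi_Q$, so $u^{-1}s_\beta u=s_{u^{-1}\beta}\in Q$ and $s_\beta\in P\cap uQu^{-1}$. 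Induction on $\ell(w)$, applied to $ws_\beta$, gives $P\cap uQu^{-1}\subset W_K$. The reverse inclusion follows from the same fact that $s_{u^{-1}\beta}\in Q$ for $\beta\in K$. With this line your route becomes a self-contained proof of the intersection lemma, using only the two inclusions that come from minimality. That is more than the paper's citation provides.

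There are two smaller corrections.

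\emph{The reflection-subgroup sentence.} Delete the claim that an intersection of two reflection subgroups is generated by the reflections it contains. It is false in general: in type $B_2$ (roots $\pm e_i$, $\pm e_1\pm e_2$), the reflection subgroups $\langle s_{e_1},s_{e_2}\rangle$ and $\langle s_{e_1-e_2},s_{e_1+e_2}\rangle$ intersect in $\{\pm1\}$. For parabolic subgroups it is true, but only via a nontrivial theorem such as Steinberg's fixator theorem.

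\emph{Standardness of $R'$.} ``Conjugating by $u$'' only shows that $R'=u^{-1}W_Ku$ is generated by the reflections $s_{u^{-1}\beta}$ with $\beta\in K$. To conclude that $R'$ is a standard parabolic subgroup of $Q$, you need $u^{-1}\beta\in J$, not merely $u^{-1}\beta\in\Phi_J$. This does hold. Writing $u^{-1}\beta=\sum_{\alpha\in J}c_\alpha\alpha$ with $c_\alpha\geq0$ gives $\beta=\sum_{\alpha\in J}c_\alpha\,u\alpha$, where the $u\alpha$ are distinct positive roots. Since $\beta$ is simple, exactly one term is nonzero, with $u\alpha=\beta$. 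Alternatively, rerun your argument for the double coset $Qu^{-1}P$, in which $u^{-1}$ is the minimal element.
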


\begin{proof}
Since \(W\) is finite, the double coset contains an element \(u\) of
minimal length. Its minimality implies $ \ell(s_\alpha u)>\ell(u)$ for all $\alpha\in I$, and $\ell(us_\beta)>\ell(u)$ for all $\beta\in J$.
Thus $u\in{}^I W\cap W^J$, and by \eqref{eq: length-root}, we have 
\begin{equation}
\label{eq:u-positivity}
 u^{-1}\Phi_P^+\subset\Phi^+,
 \qquad
 u\Phi_Q^+\subset\Phi^+.
\end{equation}

By Solomon's intersection lemma \cite[Lemma~2]{Sol76}, there
exist subsets \(K\subset I\) and \(K'\subset J\) such that
\[
 P\cap uQu^{-1}=W_K,
 \qquad
 Q\cap u^{-1}Pu=W_{K'}.
\]
More explicitly, after identifying subsets of simple roots with the
corresponding subsets of simple reflections, one may take
\[
 K=\{s\in I:u^{-1}su\in J\},
 \qquad
 K'=\{t\in J:utu^{-1}\in I\}.
\]
Consequently, $R=W_K$ and $ R'=W_{K'}$ are standard parabolic subgroups of \(P\) and
\(Q\), respectively.

We next identify their root systems.
Since \(u^{-1}s_\alpha u=s_{u^{-1}\alpha}\), and since a root
reflection belongs to a parabolic subgroup precisely when its
root belongs to the corresponding root subsystem, we have
\[
 \Phi_R
 =\{\alpha\in\Phi_P:u^{-1}\alpha\in\Phi_Q\}
 =\Phi_P\cap u\Phi_Q.
\]
Similarly,
$\Phi_{R'}=\Phi_Q\cap u^{-1}\Phi_P$.
This proves \eqref{eq:intersection-root-systems}. 

We next identify the positive systems.  
We first prove $\Phi_R^+=\Phi_P^+\cap u\Phi_Q^+$.
If
$\alpha\in\Phi_P^+\cap u\Phi_Q^+$, 
then \(\alpha\in\Phi_P\cap u\Phi_Q=\Phi_R\), and \(\alpha\) is
positive. 
This proves
$\Phi_P^+\cap u\Phi_Q^+\subset\Phi_R^+$.
Conversely, let \(\alpha\in\Phi_R^+\). Since $R$ is a standard parabolic subgroup of $P$, we have $\alpha\in\Phi_P^+$. Therefore the first inclusion in
\eqref{eq:u-positivity} gives $u^{-1}\alpha\in\Phi^+$.
Moreover, the identity $\Phi_R=\Phi_P\cap u\Phi_Q$
shows that
$u^{-1}\alpha\in\Phi_Q$, and therefore $u^{-1}\alpha\in\Phi_Q\cap\Phi^+=\Phi_Q^+$. We conclude that 
$\alpha\in u\Phi_Q^+$, and $\Phi_R^+\subset\Phi_P^+\cap u\Phi_Q^+$.
We have proved $\Phi_R^+=\Phi_P^+\cap u\Phi_Q^+$; the other identity 
$\Phi_{R'}^+
    =\Phi_Q^+\cap u^{-1}\Phi_P^+$ can be proved in a similar way, and we have thus finished the proof of \eqref{eq:intersection-positive-systems}.

We next prove the disjoint decomposition of \(\Phi^+\).  By
\eqref{eq:u-positivity}, \(u\Phi_Q^+\subset\Phi^+\), while
\(\Phi_P^+\subset\Phi^+\) by definition.  These two subsets determine
the disjoint partition of \(\Phi^+\) into
\[
 \Phi_P^+\setminus u\Phi_Q^+,\qquad
 \Phi_P^+\cap u\Phi_Q^+,\qquad
 u\Phi_Q^+\setminus\Phi_P^+,\qquad
 \Phi^+\setminus(\Phi_P^+\cup u\Phi_Q^+);
\]
see Figure~\ref{fig:four-root-classes}.  By
\eqref{eq:intersection-positive-systems}, $ \Phi_P^+\cap u\Phi_Q^+=\Phi_R^+=u\Phi_{R'}^+$. 
Consequently,
\[
\begin{aligned}
 \Phi_P^+\setminus u\Phi_Q^+
   &=\Phi_P^+\setminus\Phi_R^+,\\
 u\Phi_Q^+\setminus\Phi_P^+
   &=u\Phi_Q^+\setminus\Phi_R^+
     =u\bigl(\Phi_Q^+\setminus\Phi_{R'}^+\bigr).
\end{aligned}
\]
The disjoint decomposition \eqref{eq:four-root-classes} follows.

We finally derive the double-coset factorization. Let \(a\in P\) and \(b\in Q\). Applying the one-sided coset decomposition of \(P\) with respect to
\(R\), as in Lemma~\ref{lem:one-sided-cosets}, write uniquely
\[
 a=pr,
 \qquad
 p\in P^R,\quad r\in R,
\]
with \(  \ell(a)=\ell(p)+\ell(r)\). 
Since \(r\in R\) and \(R'=u^{-1}Ru\subset Q\), the element $r'=u^{-1}ru$ 
belongs to \(R'\subset Q\). Therefore
\[
 aub=prub
     =pu(u^{-1}ru)b
     =pu(r'b).
\]
This proves that every element of \(PuQ\) can be written as
$puq$, where $p\in P^R$ and $q\in Q$.
For uniqueness, suppose
\[
 puq=p'uq',
 \qquad
 p,p'\in P^R,\quad q,q'\in Q.
\]
Then
\[
 p'^{-1}p=u(q'q^{-1})u^{-1}.
\]
The left-hand side belongs to \(P\), while the right-hand side belongs
to \(uQu^{-1}\). Hence
\[
 p'^{-1}p\in P\cap uQu^{-1}=R.
\]
Thus \(p\) and \(p'\) belong to the same left \(R\)-coset. As \(p\) and \(p'\) are both minimal-length representatives of the same
left \(R\)-coset in \(P\), by the uniqueness assertion in
Lemma~\ref{lem:one-sided-cosets}, we have \(p=p'\). 
It then follows from \(puq=puq'\) that \(q=q'\). Therefore, $(p,q)\mapsto puq$ is a bijection from $P^R\times Q$ onto $PuQ$.

\end{proof}

\begin{figure}[t]
\centering
\begin{tikzpicture}[font=\small]
  \draw[rounded corners=2pt]
    (-4,-2.2) rectangle (4,2.2);
  \node[anchor=north west] at (-3.85,2.05) {$\Phi^+$};

  \filldraw[
    fill=blue!15,
    fill opacity=.5,
    draw=blue!60
  ] (-1.1,0) ellipse (2.35 and 1.4);

  \filldraw[
    fill=red!15,
    fill opacity=.5,
    draw=red!60
  ] (1.1,0) ellipse (2.35 and 1.4);

  \node at (-2.1,0.95) {$\Phi_P^+$};
  \node at (2.1,0.95) {$u\Phi_Q^+$};

  \node[font=\bfseries] at (-2.15,0) {1};
  \node[font=\bfseries] at (0,0) {2};
  \node[font=\bfseries] at (2.15,0) {3};
  \node[font=\bfseries] at (3.45,-1.7) {4};
\end{tikzpicture}

\medskip

\begin{tabular}{@{}c@{\qquad}l@{}}
\textbf{1} & \(\Phi_P^+\setminus\Phi_R^+\), \\[1mm]
\textbf{2} & \(\Phi_R^+=u\Phi_{R'}^+\), \\[1mm]
\textbf{3} & \(u\bigl(\Phi_Q^+\setminus\Phi_{R'}^+\bigr)\), \\[1mm]
\textbf{4} &
\(\Phi^+\setminus\bigl(\Phi_P^+\cup u\Phi_Q^+\bigr)\).
\end{tabular}

\caption{The four root classes in
\eqref{eq:four-root-classes}.}
\label{fig:four-root-classes}
\end{figure}

\begin{remark}
As in the one-sided case, one may prove that for the minimal representative \(u\) of the double coset \(PuQ\), the factorization
$$
w=puq,
\qquad
p\in P^R,\quad q\in Q,
$$
is length-additive:
$$
\ell(puq)=\ell(p)+\ell(u)+\ell(q).
$$
Consequently, \(u\) is the unique minimal-length element of \(PuQ\).
However, neither the preceding length-additivity property nor the uniqueness of the minimal-length representative will be used below.

\end{remark}

\section{Relative BGG--Demazure identity}\label{sec: BGG}
We develop a relative BGG--Demazure identity for an inclusion
\(R\subset P\) of parabolic subgroups.  It expresses the alternating
sum over the minimal representatives of the left cosets \(P/R\) in
terms of the divided-difference operator associated with the relative
longest element \(w_P^R\). Such operators originate in the work of
Bernstein--Gel'fand--Gel'fand and Demazure
\cite{BGG1973,Demazure1974}, and we refer to \cite[Chapter~IV]{Hil82}
for a detailed exposition. The standard BGG--Demazure identity was used in the
second author's earlier work \cite{ZhangStrichartz} to
estimate differences of characters in the study of
Schr\"odinger flow on compact Lie groups.

In this paper, root systems are not required to span the ambient Euclidean space.
The associated Weyl group acts trivially on the orthogonal complement
of the root span. Weyl alternants and their normalized quotients are
regarded as functions on the full ambient space, and directional
derivatives may be taken in arbitrary ambient directions.
When the spectral parameter lies in the root span, only the
components of the derivative directions in the span contribute.
For a general spectral parameter, its orthogonal component
contributes a common exponential factor; this will be handled
by Leibniz's rule when proving the main estimates.
We call $X\in V$ dominant if $\alpha(X)\geq0$ for all
$\alpha\in\Delta$. Similarly, we call $\lambda\in V^*$ dominant
if $\langle\lambda,\alpha^\vee\rangle\geq0$ for all
$\alpha\in\Delta$. These conditions depend only on the components
in the root span, and every $W$-orbit in $V$ or $V^*$ contains
a dominant point.

Let $K$ be a Weyl group acting on the Euclidean space $V$. 
For a simple reflection $s_\alpha$ in $K$ and
$\alpha(Y)\ne0$, define
\[
 \partial_{s_\alpha} f(Y)=\frac{f(Y)-f(s_\alpha Y)}{\alpha(Y)}.
\]
For smooth $f$, since $Y-s_\alpha Y=\alpha(Y)\alpha^\vee$, the fundamental theorem of
calculus gives
\begin{equation}\label{eq: FTC-one}
 \partial_{s_\alpha}f(Y)
  =\int_0^1
    [D_{\alpha^\vee}f]
      \bigl(s_\alpha Y+t(Y-s_\alpha Y)\bigr)\dd t.
\end{equation}
The right-hand side extends the divided difference continuously to every
$Y\in V$ and maps $C^\infty(V)$ to itself.

For $Y\in V$, $f\in C^\infty(V)$, define \[
 \pi_K(Y)=\prod_{\alpha\in\Phi_K^+}\alpha(Y),
 \qquad
 \Alt_K f=\sum_{k\in K}\det(k)k(f),
\]
where $k(f)(Y)=f(k^{-1}Y)$. 
We record the following standard facts. 

\begin{lemma}\label{lem: BGG}
Let $w_K$ be the longest element of the Weyl group $K$. Fix a positive root system $\Phi^+_K$ of $K$, and a corresponding system $\Delta$ of simple roots.  Given a reduced expression of $w_K=s_{\alpha_1}\cdots s_{\alpha_n}$, where $\alpha_1,\ldots,\alpha_n\in\Delta$, define 
$$\partial_{w_K}=\partial_{s_{\alpha_1}}\cdots\partial_{s_{\alpha_n}}:C^\infty(V)\to C^\infty(V).$$
Then for all $f\in C^\infty(V)$, we have the BGG--Demazure identity  
\begin{equation}\label{eq: BGG}
 \partial_{w_K}f=\frac{\Alt_K f}{\pi_K}.
\end{equation}
In particular, the definition of $\partial_{w_K}:C^\infty(V)\to C^\infty(V)$ does not depend on the chosen reduced expression of $w_K$. 
\end{lemma}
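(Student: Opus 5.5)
The plan is to prove \eqref{eq: BGG} first on the dense open set where $\pi_K\neq0$, and then extend it by continuity. Both sides are defined on all of $V$: the left side through the integral formula \eqref{eq: FTC-one}. Once the identity holds on the regular set, the right side $\Alt_K f/\pi_K$ extends smoothly, and the extension equals $\partial_{w_K}f$. Independence of the reduced expression then follows at once, since the right-hand side involves no choice.

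To prove the identity on the regular set, we first write each operator explicitly. For a simple reflection, $\partial_{s_\alpha}f=\alpha^{-1}(f-s_\alpha f)$, using $s_\alpha(\alpha)=-\alpha$. By induction on $n$ we expand the composition $\partial_{s_{\alpha_1}}\cdots\partial_{s_{\alpha_n}}$ as
\[
\partial_{w_K}f=\sum_{k\in K}c_k\, k(f),
\]
where each $c_k$ is a rational function. Only elements $k$ that are subwords of the reduced expression appear, and every $c_k$ has denominator a product of reflected roots $s_{\alpha_1}\cdots s_{\alpha_{j-1}}\alpha_j$. For a reduced expression these reflected roots are exactly the positive roots $\Phi_K^+$, each appearing once.

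The top term, $k=w_K$, receives only the contribution in which every factor chooses $-s_{\alpha_j}$. Its coefficient is computed directly: it equals $(-1)^n/\prod_j s_{\alpha_1}\cdots s_{\alpha_{j-1}}(\alpha_j)$, with the reflections applied appropriately. This is $\det(w_K)/\pi_K$, up to checking the signs via $w_K\Phi_K^+=-\Phi_K^+$.

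The main step, which I also expect to be the main obstacle, is to show that all the coefficients $c_k$ are determined by the top one, via skew-symmetry. For each simple reflection $s=s_{\alpha_i}$ we have $\partial_{s}\partial_{w_K}=0$. This is because $\ell(s w_K)<\ell(w_K)$, so $w_K$ has a reduced expression beginning with $s$, and $\partial_s^2=0$ (indeed $\partial_s f$ is $s$-invariant, so $\partial_s$ annihilates it).

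There is a subtlety here: this uses independence of the reduced expression, which is what we are trying to prove. To avoid circularity, we argue instead that $\sum_k c_k\,k(f)$ is $s$-anti-invariant. Equivalently, we show that $\pi_K\,\partial_{w_K}f$ is $K$-skew for every $f$, by comparing coefficients. We then expect to conclude with the following standard uniqueness argument. Any operator $T=\sum_k c_k k$ whose output is always $K$-anti-invariant satisfies $c_{s k}=-s(c_k)$, after accounting for the anti-invariance of $\pi_K$. Hence $T=c\,\Alt_K$ for a function $c$ with $s(c)=c$. Comparing coefficients at $w_K$ then gives $c=1/\pi_K$.

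If the anti-invariance turns out to be hard to establish for an arbitrary reduced word, we fall back on Matsumoto's theorem. Under that approach we verify the braid relations for the $\partial_{s}$ in rank two by direct computation. This makes $\partial_{w_K}$ well defined, and then $\partial_s\partial_{w_K}=\partial_s\partial_s\partial_{sw_K}=0$ gives the required anti-invariance, since $\partial_s g=0$ means exactly that $g$ is $s$-invariant, i.e.\ $\pi_K g$ is $s$-skew.
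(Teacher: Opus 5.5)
Your argument goes through once you commit to the Matsumoto fallback, but it follows a genuinely different route from the paper's.

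The paper does not redo the algebra. It quotes the identity for polynomial $f$ (Hiller, Ch.~IV, Prop.~1.6). It then notes that at a regular $Y$, both $(\partial_{w_K}f)(Y)$ for a fixed reduced word and $(\Alt_K f)(Y)$ depend only on $f|_{KY}$, and replaces $f$ by a polynomial interpolating it on the finite orbit $KY$. The extension across the walls and the independence of the reduced word then come from the same density argument you use.

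You instead reprove the algebraic identity pointwise on the regular set. Your expansion $\sum_k c_k\,k(f)$ is exactly the orbit-locality that the paper exploits through interpolation. The paper's route is short and rests on a citable black box. Yours is self-contained, and via Matsumoto's theorem it gives independence of reduced words for every $w\in K$ directly; the paper derives that afterwards from this lemma by Hiller's rank-two reduction. The logical order is therefore reversed: the rank-two braid relations are an \emph{input} for you and an \emph{output} for the paper. For you they are a finite but real computation, with $2^6$ terms per side for $G_2$, which you assert rather than carry out.

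Your first suggestion, obtaining skewness of $\pi_K\partial_{w_K}f$ for an arbitrary reduced word ``by comparing coefficients'', is not an argument as written. Only the braid-relation fallback removes the circularity.

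There are also some minor slips:
\begin{itemize}
\item $\partial_{w_K}f$ is $s$-\emph{invariant}; only $\pi_K\partial_{w_K}f$ is skew.
\item Non-top coefficients have denominators $\bigl(\prod_{i<j,\,i\in J}s_{\alpha_i}\bigr)\alpha_j$, where $J$ is the set of positions at which $-s_{\alpha_i}$ is chosen. These can repeat and are not the inversion roots. This is harmless, since they are still roots of $\Phi_K$.
\item The top coefficient is exactly $\det(w_K)/\pi_K$ with no sign to check, because the $s_{\alpha_1}\cdots s_{\alpha_{j-1}}\alpha_j$ are precisely the elements of $\Phi_K^+$.
\item Skewness of $T=\pi_K\partial_{w_K}=\sum_k\tilde c_k\,k$ yields $\tilde c_k=\det(k)\,k(\tilde c_e)$, i.e.\ $Tf=\Alt_K(\tilde c_e f)$, rather than $c\,\Alt_K f$ with $c$ invariant. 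The top coefficient $\tilde c_{w_K}=\det(w_K)$ then gives $w_K(\tilde c_e)=1$, hence $\tilde c_e=1$, so your conclusion stands.
\end{itemize}
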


\begin{proof}
The statements are true if $C^\infty(V)$ is replaced by the space
$S(V)$ of polynomial functions on $V$; see \cite[Chapter~IV, Proposition~1.6]{Hil82}. Let $f\in C^\infty(V)$ and let $Y\in V$
be regular. For any fixed reduced expression of $w_K$, the value
$(\partial_{w_K}f)(Y)$ defined using this expression depends only on
the values of $f$ on the Weyl group orbit $KY$ of $Y$.
Since $KY$ is finite, there exists a polynomial
$\widetilde{f}\in S(V)$ that agrees with $f$ on $KY$.
Likewise, $(\Alt_K f)(Y)$ depends only on the values of $f$ on
$KY$. Hence
\[
(\partial_{w_K}f)(Y)
=(\partial_{w_K}\widetilde{f})(Y)
=\frac{(\Alt_K\widetilde{f})(Y)}{\pi_K(Y)}
=\frac{(\Alt_K f)(Y)}{\pi_K(Y)}.
\]
Thus \eqref{eq: BGG} holds on the dense open set of regular points,
and there the resulting value is independent of the chosen reduced
expression of $w_K$.

By \eqref{eq: FTC-one}, the operator associated with each reduced
expression maps $C^\infty(V)$ to $C^\infty(V)$ and is defined at every
$Y\in V$. Since the operators associated with any two reduced
expressions agree on the dense set of regular points, they agree on
all of $V$. Consequently, $\Alt_K f/\pi_K$, initially defined at
regular points, has a smooth extension to all of $V$, equal to
$\partial_{w_K}f$. This proves \eqref{eq: BGG} and the independence of
the chosen reduced expression.
\end{proof}

More generally, for any $w\in K$ with reduced expression
$w=s_{\alpha_1}\cdots s_{\alpha_n}$ in the simple reflections
associated with $\Phi_K^+$, define
\[
 \partial_w
 :=
 \partial_{s_{\alpha_1}}\cdots\partial_{s_{\alpha_n}}
 :C^\infty(V)\longrightarrow C^\infty(V),
 \qquad
 \partial_e:=\mathrm{Id}.
\]
This definition is independent of the chosen reduced expression.
Indeed,  using Lemma~\ref{lem: BGG}, the rank-two reduction argument in
\cite[Chapter~IV, Proposition~1.7]{Hil82} carries over verbatim
from $S(V)$ to $C^\infty(V)$.

Let $R$ be a standard parabolic subgroup of $P$.  If $w_R,w_P$ are the
longest elements, then by Lemma~\ref{lem:one-sided-cosets},
there exists $w_P^R\in P^R$ such that 
\begin{align}\label{eq:wpR}
 w_P=w_P^R w_R,\qquad
 n:=\ell(w_P^R)=\ell(w_P)-\ell(w_R)=|\Phi_P^+|-|\Phi_R^+|.
\end{align}
For \(F\in C^\infty(V)\), we say that \(F\) is \(R\)-invariant if
\(r(F)=F\) for every \(r\in R\), and \(R\)-skew if
\( r(F)=\det(r)F\) for every $r\in R$.
We now establish a 
relative version of the BGG--Demazure identity. 

\begin{lemma}[Relative BGG--Demazure identity]\label{lem:relative-BGG}
Let $G\in C^\infty(V)$ be $R$-invariant. Then 
\begin{equation}\label{eq:relative-BGG}
 \sum_{p\in P^R}\det(p)\,p(\pi_RG)
   =\pi_P\,\partial_{w_P^R}G.
\end{equation}
\end{lemma}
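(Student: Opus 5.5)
We will derive \eqref{eq:relative-BGG} from the absolute identity of Lemma~\ref{lem: BGG} applied twice, once for $P$ and once for $R$. The plan has two steps: first rewrite the left-hand side as a full alternating sum over $P$ applied to a suitable function, then factor the divided-difference operator $\partial_{w_P}$ as $\partial_{w_P^R}\partial_{w_R}$.

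\textbf{Step 1: expressing $\pi_R G$ as an $R$-alternant.} We look for an $f\in C^\infty(V)$ with $\Alt_R f=\pi_R G$. By Lemma~\ref{lem: BGG} for $R$, this amounts to $\partial_{w_R}f=G$. The natural candidate is $f=cG\,\pi_R$ for a suitable constant, or more cleanly $f=\frac{1}{|R|}\pi_R G$. Since $G$ is $R$-invariant and $\pi_R$ is $R$-skew (as $r\in R$ permutes $\Phi_R^+$ up to sign, with sign $\det r$), we get $\det(r)\,r(\pi_RG)=\pi_RG$. Hence
\[
\Alt_R\!\left(\tfrac1{|R|}\pi_RG\right)=\pi_RG .
\]
Then, writing $P=\bigsqcup_{p\in P^R}pR$ (Lemma~\ref{lem:one-sided-cosets}) and using $\det(pr)=\det p\det r$, we obtain
\[
\sum_{p\in P^R}\det(p)\,p(\pi_RG)=\sum_{p\in P^R}\det(p)\,p\bigl(\Alt_R f\bigr)=\Alt_P f .
\]
By Lemma~\ref{lem: BGG} for $P$, the right-hand side equals $\pi_P\,\partial_{w_P}f$.

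\textbf{Step 2: factoring $\partial_{w_P}$.} Since $w_P=w_P^Rw_R$ with $\ell(w_P)=\ell(w_P^R)+\ell(w_R)$ by \eqref{eq:wpR}, concatenating reduced words gives a reduced word for $w_P$. By the independence of reduced expressions (stated after Lemma~\ref{lem: BGG}), $\partial_{w_P}=\partial_{w_P^R}\partial_{w_R}$, with all simple reflections taken with respect to $\Phi_P^+$; the simple roots of $R$ are among those of $P$ because $R$ is standard. It therefore remains to show $\partial_{w_R}f=G$. By Lemma~\ref{lem: BGG} for $R$,
\[
\partial_{w_R}f=\frac{\Alt_R f}{\pi_R}=\frac{\pi_RG}{\pi_R}=G
\]
on regular points. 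This extends to all of $V$ by continuity, since both sides are smooth. Combining the two steps gives $\pi_P\,\partial_{w_P^R}G$.

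\textbf{Main obstacle.} The only delicate points are bookkeeping. First, we must verify the skew-invariance $r(\pi_R)=\det(r)\pi_R$ under the convention $k(f)(Y)=f(k^{-1}Y)$. Second, we must confirm that the divided-difference operators for $R$ defined via $\Phi_R^+$ coincide with those used inside $P$. Both are standard. The factorization $\partial_{w_P}=\partial_{w_P^R}\partial_{w_R}$ is legitimate on $C^\infty(V)$ precisely because length-additivity makes the concatenated word reduced.
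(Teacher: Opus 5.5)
Your proof is correct and follows essentially the same route as the paper. Both arguments rewrite the coset sum as $\Alt_P$ of an $R$-skew function, apply the BGG--Demazure identity for $P$, factor $\partial_{w_P}=\partial_{w_P^R}\partial_{w_R}$ by length-additivity, and use the identity for $R$ to reduce $\partial_{w_R}$ back to $G$; the only difference is that you normalize at the outset with $f=\pi_RG/|R|$, whereas the paper works with $\pi_RG$ and cancels the factor $|R|$ at the end.
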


\begin{proof}
As $\pi_R$ is $R$-skew and $G$ is $R$-invariant, $\pi_RG$ is $R$-skew. Applying \eqref{eq: BGG} for $R$, we have
\begin{equation}\label{eq:relative-BGG-1}
   \partial_{w_R}(\pi_RG)
  =\frac{\Alt_R(\pi_RG)}{\pi_R}=|R|G.  
\end{equation}
As the factorization $w_P=w_P^R w_R$ is length-additive, reduced expressions for $w_P^R$ and $w_R$ concatenate to a reduced expression for $w_P$. Hence,
\[
\partial_{w_P}=\partial_{w_P^R}\partial_{w_R}.
\] 
Applying \eqref{eq: BGG} for $P$ and \eqref{eq:relative-BGG-1}, we have
\begin{align}\label{eq:relative-BGG-2}
     \frac{\Alt_P(\pi_RG)}{\pi_P}
 =\partial_{w_P}(\pi_RG)=\partial_{w_P^R}\partial_{w_R}(\pi_RG)=|R|\partial_{w_P^R}G.
\end{align}
On the other hand, using Lemma~\ref{lem:one-sided-cosets} and splitting
$P$ into its left $R$-cosets, we have
\begin{align*}
     \Alt_P(\pi_RG)
    &=\sum_{p\in P^R}\sum_{r\in R}\det(pr)(pr)(\pi_RG).
\end{align*}
Using $\det(pr)=\det(p)\det(r)$ and $(pr)(\pi_RG)=p(r(\pi_RG))=\det(r)p(\pi_RG)$, we then have 
\begin{align*}
     \Alt_P(\pi_RG)
    &=|R|\sum_{p\in P^R}\det(p)p(\pi_RG).
\end{align*}
Combining this identity with \eqref{eq:relative-BGG-2} proves the result.
\end{proof}
For later reference, for a simple root $\alpha$, write 
\begin{align}\label{eq:T-alpha-t}
     T_{\alpha,t}=s_\alpha+t(I-s_\alpha)
              =tI+(1-t)s_\alpha,\qquad 0\le t\le1.
\end{align}
Iterating  \eqref{eq: FTC-one}, we obtain the integral representation of general divided-difference operators. 

\begin{lemma}[Iterated integral]
\label{lem:integral}
For a Weyl group element \(w\) with reduced expression
\[
 w=s_{\alpha_1}\cdots s_{\alpha_n},
\]
we have
\begin{equation}\label{eq:integral}
 \partial_w f(X)
 =
 \int_{[0,1]^n}
[D_{v_1(\mathbf t)}\cdots D_{v_n(\mathbf t)}
 f]\bigl(Y(\mathbf t,X)\bigr)\,\dd\mathbf t,
\end{equation}
where
$\mathbf{t}=(t_1,\ldots,t_n)$, 
\begin{equation}\label{eq:Y-t}
 Y(\mathbf t,X)
 =
 T_{\alpha_n,t_n}\cdots T_{\alpha_1,t_1}X
\end{equation}
and
\[
 v_j(\mathbf t)
 =
 T_{\alpha_n,t_n}\cdots
 T_{\alpha_{j+1},t_{j+1}}\alpha_j^\vee,
 \qquad j=1,\ldots,n.
\]
\end{lemma}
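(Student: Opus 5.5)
We argue by induction on $n$, the length of the reduced expression. For $n=0$ both sides equal $f(X)$, since the integral over $[0,1]^0$ is evaluation and $Y(\mathbf t,X)=X$. For $n=1$ the claim is exactly \eqref{eq: FTC-one}. In that formula, $s_{\alpha_1}X+t(X-s_{\alpha_1}X)=T_{\alpha_1,t}X$ and $v_1=\alpha_1^\vee$.

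For the inductive step, write $w=s_{\alpha_1}w'$ with $w'=s_{\alpha_2}\cdots s_{\alpha_n}$. Then $\partial_w=\partial_{s_{\alpha_1}}\partial_{w'}$. Put $g=\partial_{w'}f$, which is smooth by Lemma~\ref{lem: BGG} and the remarks following it. First apply \eqref{eq: FTC-one} to $g$:
\[
 \partial_w f(X)=\int_0^1 [D_{\alpha_1^\vee}g](T_{\alpha_1,t_1}X)\dd t_1 .
\]
Next apply the induction hypothesis to $w'$, relabeled with variables $t_2,\dots,t_n$:
\[
 g(Z)=\int_{[0,1]^{n-1}}[D_{v_2}\cdots D_{v_n}f]\bigl(T_{\alpha_n,t_n}\cdots T_{\alpha_2,t_2}Z\bigr)\dd t_2\cdots\dd t_n .
\]
Here $v_j=T_{\alpha_n,t_n}\cdots T_{\alpha_{j+1},t_{j+1}}\alpha_j^\vee$ for $j\ge2$, and these vectors do not depend on $Z$.

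The main point is to differentiate $g$ in the direction $\alpha_1^\vee$. The map $L=T_{\alpha_n,t_n}\cdots T_{\alpha_2,t_2}$ is linear in $Z$. The chain rule therefore gives $D_{\alpha_1^\vee}[h(LZ)]=[D_{L\alpha_1^\vee}h](LZ)$, and $L\alpha_1^\vee$ is exactly $v_1(\mathbf t)$. The integrand and its $Z$-derivatives are continuous on the compact parameter cube, so we may differentiate under the integral sign.

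Directional derivatives with constant direction vectors commute. We can thus write $D_{v_1}D_{v_2}\cdots D_{v_n}f$ in the stated order; for fixed $\mathbf t$ this is a constant-coefficient operator, so the order is immaterial. Finally, evaluate at $Z=T_{\alpha_1,t_1}X$. This gives the point $Y(\mathbf t,X)=T_{\alpha_n,t_n}\cdots T_{\alpha_1,t_1}X$. Fubini then combines the two integrals into the integral over $[0,1]^n$, which is \eqref{eq:integral}.

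The only delicate step is the chain-rule bookkeeping. It must produce the pushed-forward direction $v_1(\mathbf t)$, with the operators applied in the order $T_{\alpha_n}\cdots T_{\alpha_2}$ to $\alpha_1^\vee$. Justifying the differentiation under the integral is routine by smoothness and compactness.
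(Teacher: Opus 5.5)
Your proof is correct and takes the same approach as the paper, which states only that the formula follows by iterating \eqref{eq: FTC-one}. Your induction spells out that iteration: you peel off $\partial_{s_{\alpha_1}}$, apply the hypothesis to $s_{\alpha_2}\cdots s_{\alpha_n}$, and push $\alpha_1^\vee$ forward through $T_{\alpha_n,t_n}\cdots T_{\alpha_2,t_2}$ by the chain rule. (Minor points: smoothness of $\partial_{w'}f$ already follows from \eqref{eq: FTC-one} without Lemma~\ref{lem: BGG}, and since the chain rule places $D_{v_1}$ outermost, you do not need commutation of the derivatives.)
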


We record a uniform bound for the displacement of the
evaluation points in \eqref{eq:integral} in terms of the simple-root
values \(\alpha(X)\), for later use. 

\begin{lemma}[Geometry of the evaluation points]\label{lem:evaluation}
Let \(W_I\) be a standard parabolic subgroup of \(W\).  For
\(w\in W_I\) and \(X\in V\), every evaluation point
\(Y=Y(\mathbf t,X)\) occurring in the iterated integral representation
\eqref{eq:integral} of \(\partial_w f(X)\) satisfies
\begin{equation}\label{eq:evaluation-general}
 \|Y-X\|
 \leq C_\Phi\max_{\alpha\in I}|\alpha(X)|.
\end{equation}
If $I=\varnothing$, the maximum is understood to be zero. 
\end{lemma}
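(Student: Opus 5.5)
The plan is to track the evaluation point through the successive affine maps \(T_{\alpha_j,t_j}\) and use that each of them moves a point only along a coroot of \(\Phi_I\), by an amount controlled by simple-root values of the current point. Fix a reduced expression \(w=s_{\alpha_1}\cdots s_{\alpha_n}\). Since \(w\in W_I\), the subword property (a reduced word of an element of a standard parabolic subgroup uses only simple reflections from \(I\)) gives \(\alpha_j\in I\) for all \(j\). Set \(Y_0=X\) and \(Y_j=T_{\alpha_j,t_j}Y_{j-1}\), so that \(Y(\mathbf t,X)=Y_n\). By \eqref{eq:T-alpha-t},
\[
 Y_j-Y_{j-1}=(1-t_j)(s_{\alpha_j}Y_{j-1}-Y_{j-1})=-(1-t_j)\alpha_j(Y_{j-1})\alpha_j^\vee ,
\]
and hence \(\|Y_j-Y_{j-1}\|\le\|\alpha_j^\vee\|\,|\alpha_j(Y_{j-1})|\).

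Next, consider the seminorm \(M(Z)=\max_{\alpha\in I}|\alpha(Z)|\). Each step changes \(\beta(Y)\), for \(\beta\in I\), by \(-(1-t_j)\alpha_j(Y_{j-1})\langle\beta,\alpha_j^\vee\rangle\). Since Cartan integers are bounded by \(3\) in absolute value, this gives
\[
 M(Y_j)\le M(Y_{j-1})+3M(Y_{j-1})=4M(Y_{j-1}).
\]
Iterating yields \(M(Y_j)\le4^{j}M(X)\). Summing the step bounds then gives
\[
 \|Y-X\|\le\sum_{j=1}^n\|\alpha_j^\vee\|\,4^{j-1}M(X).
\]
Because \(n\le\ell(w_0)=|\Phi^+|\) and \(\|\alpha^\vee\|\) takes only finitely many values, the prefactor is bounded by a constant \(C_\Phi\). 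If \(I=\varnothing\), then \(w=e\), \(n=0\) and \(Y=X\), which is consistent with the convention that the maximum is zero.

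There is no serious obstacle. The only points needing care are the two used above: that reduced words for elements of \(W_I\) use only letters from \(I\) (standard, e.g.\ \cite[\S1.10]{Humphreys}), and that the growth factor in \(M\) at each step is uniform. Both are handled by finiteness of the root system.
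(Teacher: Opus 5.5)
Your argument is correct, but it takes a different route from the paper.

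The paper does not track the point step by step. It observes that each \(T_{\alpha,t}=tI+(1-t)s_\alpha\) is a convex combination of the identity and \(s_\alpha\). Expanding the product \eqref{eq:Y-t} therefore places \(Y(\mathbf t,X)\) in the convex hull of \(W_IX\). It then bounds \(\|X-vX\|\) for every \(v\in W_I\) by telescoping along a reduced word, \(X-vX=\sum_j\beta_j(X)\,v_{j-1}\beta_j^\vee\). Because the partial products \(v_{j-1}\) act orthogonally, every increment is evaluated at \(X\) itself, and no growth factor appears.

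Your intermediate points \(Y_{j-1}\) are not Weyl images of \(X\), so you cannot pull the increments back to \(X\). You compensate with the a priori bound \(M(Y_j)\le 4M(Y_{j-1})\) coming from the Cartan integers. This gives a constant exponential in \(|\Phi^+|\) rather than \(|\Phi^+|\max_\alpha\|\alpha^\vee\|\), which is harmless here.

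What each route buys: yours is hands-on and needs no convexity. The paper's yields the cleaner structural fact \(Y\in\operatorname{conv}(W_IX)\), uniformly over reduced words and \(\mathbf t\). Both arguments rely on the same standard fact that reduced words for elements of \(W_I\) use only simple reflections from \(I\).
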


\begin{proof}
For \(w\in W_I\), choose a reduced expression
\[
 w=s_{\beta_1}\cdots s_{\beta_d},
 \qquad \beta_1,\ldots,\beta_d\in I,
\]
and put
\[
 w_j=s_{\beta_1}\cdots s_{\beta_j},
 \qquad w_0=e.
\]
Telescoping and the reflection formula give
\begin{align*}
 X-wX
 &=\sum_{j=1}^d(w_{j-1} X-w_j X)\\
 &=\sum_{j=1}^d
   w_{j-1}\bigl(X-s_{\beta_j}X\bigr)\\
 &=\sum_{j=1}^d
   \beta_j(X)\,w_{j-1}\beta_j^\vee.
\end{align*}
Since the Weyl group acts orthogonally and
$d=\ell(w)\leq|\Phi_I^+|\leq|\Phi^+|$, 
we obtain
\[
 \|X-wX\|
 \leq C_\Phi\max_{\alpha\in I}|\alpha(X)|.
\]
On the other hand, each operator $ T_{\alpha,t}$ 
is a convex combination of \(I\) and \(s_\alpha\).  Expanding the
product in \eqref{eq:Y-t} therefore shows that $Y(\mathbf t,X)$ lies in the convex hull of $W_I X$.
The preceding estimate now implies \eqref{eq:evaluation-general}.
\end{proof}

\begin{remark}\label{rem: smooth normalized}
Let $Q$ be a parabolic subgroup of $W$. Since $A_\lambda^\Phi$ is
$W$-skew, Lemma~\ref{lem: BGG}, applied with $K=W$, shows that
$A_\lambda^\Phi/\pi_W$ admits a canonical smooth extension to all
of $V$. Since $\Phi_Q\subset\Phi$, the quotient $\pi_W/\pi_Q$ is,
up to an overall sign, a product of linear root factors. Consequently,
\[
\frac{A_\lambda^\Phi}{\pi_Q}
=\frac{\pi_W}{\pi_Q}\frac{A_\lambda^\Phi}{\pi_W}
\]
also admits a canonical smooth extension to all of $V$.
\end{remark}

\begin{remark}[Complex variables]\label{rem:complex-BGG}
The divided-difference integral formulas and the BGG--Demazure
identity remain valid for entire functions on
$V_{\mathbb C}=V\otimes_{\mathbb R}\mathbb C$, with roots and
Weyl group actions extended complex linearly. Indeed, for $f$ entire on $V_{\mathbb C}$, the
fundamental theorem of calculus gives
\[
 (\partial_\alpha f)(Z)
 =
 \int_0^1
 D_{\alpha^\vee}f\bigl(
 Z-t\alpha(Z)\alpha^\vee
 \bigr)\,\mathrm dt.
\]
The right-hand side defines an entire function of $Z$ and agrees
with $(f(Z)-f(s_\alpha Z))/\alpha(Z)$ away from the root
hyperplane. Iterating these formulas therefore shows that
$\partial_w$ preserves entire functions.

After multiplication by the root product, both sides of the
BGG--Demazure identity are entire. Since an entire function on
$V_{\mathbb C}$ is uniquely determined by its restriction to
$V$, the identity holds throughout $V_{\mathbb C}$. In particular, division
of an entire $W$-skew function by the positive-root product
has a canonical entire extension. 
\end{remark}

\section{The exact double-coset block}
\label{sec: exact block}

Let $P,Q$ be standard parabolic subgroups of $W$ associated to a simple system $\Delta$. 
Decompose the Weyl group into double cosets
\[
 W=\bigsqcup_{u\in\cD}PuQ
\]
using the set $\cD$ of minimal representatives. For \(u\in\cD\), set
\( R=P\cap uQu^{-1}\) and \( R'=u^{-1}Ru\) as in Lemma \ref{lem:double-cosets}. 
Let \(B_{u,\lambda}(X)\) denote the contribution of the double coset \(PuQ\) to
the Weyl alternant, namely, 
\[
 B_{u,\lambda}(X)
 :=
 \sum_{w\in PuQ}\det(w)e^{i\langle w\lambda,X\rangle}.
\]

In the induction below, $P$ and $Q$ will be chosen from the
spatial and spectral data, respectively. The order $PuQ$
reflects these roles. In the root factors
$|\alpha(X)|\,|\langle w\lambda,\alpha^\vee\rangle|$,
right multiplication of $w$ by $q\in Q$ replaces $\lambda$
by $q\lambda$, while left multiplication by $p\in P$
corresponds, after reindexing the roots up to sign, to
replacing $X$ by $p^{-1}X$. Thus the spatial subgroup
appears on the left and the spectral subgroup on the right.

Relative to \(Q\), decompose \(\lambda\in V^*\) orthogonally as
\begin{equation}\label{eq:spectral-split}
 \lambda=\lambda_Q^\perp+\lambda_Q,
 \qquad
 \lambda_Q
 =\operatorname{proj}_{\operatorname{span}_{\mathbb R}(\Phi_Q)}
   \lambda,
 \qquad
 \lambda_Q^\perp\in
 \operatorname{span}_{\mathbb R}(\Phi_Q)^\perp.
\end{equation}
We now rewrite the double-coset block.
\begin{lemma}[Double-coset descent formula]\label{lem:block}
We have
\begin{equation}
 B_{u,\lambda}(X)
 =
 \det(u)\sum_{p\in P^R}\det(p)
 e^{i\ip{\lambda_Q^\perp}{u^{-1}p^{-1}X}}\,
 A_{\lambda_Q}^{\Phi_Q}(u^{-1}p^{-1}X).
\end{equation}

\end{lemma}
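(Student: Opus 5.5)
We parametrize the double coset using Lemma~\ref{lem:double-cosets}(4). The bijection $P^R\times Q\to PuQ$, $(p,q)\mapsto puq$, lets us rewrite
\[
 B_{u,\lambda}(X)=\sum_{p\in P^R}\sum_{q\in Q}\det(puq)\,e^{i\langle puq\lambda,X\rangle}.
\]
Since $\det$ is multiplicative, $\det(puq)=\det(u)\det(p)\det(q)$. The Weyl group acts orthogonally, so $\langle puq\lambda,X\rangle=\langle q\lambda,u^{-1}p^{-1}X\rangle$. Pulling $\det(u)\det(p)$ out of the inner sum and setting $Y=u^{-1}p^{-1}X$, it remains to show that for every $Y\in V$
\[
 \sum_{q\in Q}\det(q)e^{i\langle q\lambda,Y\rangle}
 =e^{i\langle\lambda_Q^\perp,Y\rangle}A^{\Phi_Q}_{\lambda_Q}(Y).
\]

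For this we use the orthogonal splitting \eqref{eq:spectral-split}. The group $Q$ is generated by the reflections $s_\alpha$ with $\alpha\in\Phi_Q$. Each such reflection fixes $\operatorname{span}_{\mathbb R}(\Phi_Q)^\perp$ pointwise, because $\langle\lambda_Q^\perp,\alpha^\vee\rangle=0$. Each also preserves $\operatorname{span}_{\mathbb R}(\Phi_Q)$. Hence $q\lambda=\lambda_Q^\perp+q\lambda_Q$ for all $q\in Q$, and
\[
 e^{i\langle q\lambda,Y\rangle}=e^{i\langle\lambda_Q^\perp,Y\rangle}e^{i\langle q\lambda_Q,Y\rangle}.
\]
Summing against $\det(q)$ and comparing with the definition \eqref{eq:linear-alternant} for the root system $\Phi_Q$ gives the claim. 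Here we use the paper's convention that $\Phi_Q$ need not span $V$ and that its Weyl group is $Q$ acting on all of $V$; the determinant of $q$ is the same whether computed on $V$ or on the span, since $q$ acts trivially on the complement.

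No step is a real obstacle; this is bookkeeping. The only points needing care are two. First, in the identity $\langle w\lambda,X\rangle=\langle\lambda,w^{-1}X\rangle$ the same orthogonal action is used on $V$ and $V^*$ under the identification. Second, the inverses must appear in the order $u^{-1}p^{-1}$.
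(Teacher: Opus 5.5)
Your proposal is correct and follows essentially the same route as the paper: parametrize $PuQ$ via the bijection $P^R\times Q\to PuQ$ of Lemma~\ref{lem:double-cosets}(4), factor out $\det(u)\det(p)$, move the Weyl action onto $X$ to get $\langle q\lambda,u^{-1}p^{-1}X\rangle$, and use $q\lambda=\lambda_Q^\perp+q\lambda_Q$ to recognize the inner sum as $e^{i\langle\lambda_Q^\perp,Y\rangle}A^{\Phi_Q}_{\lambda_Q}(Y)$. Your remarks on the determinant of $q$ acting on all of $V$ and on the order $u^{-1}p^{-1}$ are accurate.
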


\begin{proof}
Applying Lemma \ref{lem:double-cosets}, since \(u\) is the minimal representative of \(PuQ\), every
\(w\in PuQ\) has a unique factorization
\[
 w=puq,
 \qquad
 p\in P^R,\qquad q\in Q.
\]
We may therefore write
\begin{equation*}
 B_{u,\lambda}(X)
 =
 \sum_{p\in P^R}\sum_{q\in Q}
 \det(puq)e^{i\langle puq\lambda,X\rangle}.
\end{equation*}

Applying the spectral decomposition \eqref{eq:spectral-split}, as $Q$ acts trivially on $\operatorname{span}_{\mathbb R}(\Phi_Q)^\perp$, we have 
$q\lambda_Q^\perp=\lambda_Q^\perp$ for all $q\in Q$, and thus 
\[
 q\lambda=\lambda_Q^\perp+q\lambda_Q.
\]
Using Weyl invariance of the Euclidean inner product, we obtain
\begin{align*}
 \langle puq\lambda,X\rangle
 &=
 \langle q\lambda,u^{-1}p^{-1}X\rangle                                      \\
 &=
 \langle \lambda_Q^\perp,u^{-1}p^{-1}X\rangle
 +
 \langle q\lambda_Q,u^{-1}p^{-1}X\rangle.
\end{align*}
Consequently, using the multiplicativity of the determinant, 
we have 
\begin{align*}
 B_{u,\lambda}(X)
 =
 \det(u)
 \sum_{p\in P^R}\det(p)
 e^{i\langle \lambda_Q^\perp,u^{-1}p^{-1}X\rangle}
 \sum_{q\in Q}\det(q)
 e^{i\langle q\lambda_Q,u^{-1}p^{-1}X\rangle}.
\end{align*}
Noting the definition of the Weyl alternant for the subsystem \(\Phi_Q\),
we have finished the proof.
\end{proof}

\begin{remark}[Relation to one-sided descent]
When \(P=\{e\}\) or \(Q=\{e\}\), Lemma~\ref{lem:block} specializes to the corresponding block
identity for a one-sided parabolic coset. These identities are
Weyl-alternant forms of the descent used by Cowling and Nevo for
spherical functions on complex semisimple groups
\cite{CowlingNevo2001}; their counterpart for characters of compact Lie groups appears in
\cite{ZhangFourierRestriction, ZhangCharacterRestriction}. Thus the lemma extends these descent formulas to parabolic
double cosets. 
\end{remark}

Finally, we apply the relative BGG--Demazure identity established in the
preceding section to rewrite each double-coset block once more, this time
in a form suited to the induction-on-rank proof of
Proposition~\ref{prop:local-jets} carried out in the next section.
Define
 \begin{align}
 G_u(Y)
 &=
 e^{i\ip{\lambda_Q^\perp}{u^{-1}Y}}\,
 \frac{A_{\lambda_Q}^{\Phi_Q}(u^{-1}Y)}
      {\pi_{R'}(u^{-1}Y)},\qquad 
  F_u(Y)=
 e^{i\ip{\lambda_Q^\perp}{u^{-1}Y}}\,
 A_{\lambda_Q}^{\Phi_Q}(u^{-1}Y).
 \label{eq:G}
\end{align}

Since \(R'\) is a parabolic subgroup of \(Q\), by Remark \ref{rem: smooth normalized}, 
$
A_{\lambda_Q}^{\Phi_Q}/\pi_{R'}
$
admits a canonical smooth extension. In particular, this implies that \(G_u\in C^\infty(V)\). 

\begin{lemma}\label{lem:exact-block}
\begin{equation}\label{eq:exact-block}
 B_{u,\lambda}(X)
 =
 \det(u)\pi_P(X)\,\partial_{w_P^R}G_u(X).
\end{equation}
\end{lemma}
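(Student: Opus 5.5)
The plan is to combine the double-coset descent formula of Lemma~\ref{lem:block} with the relative BGG--Demazure identity of Lemma~\ref{lem:relative-BGG}. By Lemma~\ref{lem:block} and the definition \eqref{eq:G},
\[
 B_{u,\lambda}(X)=\det(u)\sum_{p\in P^R}\det(p)\,F_u(p^{-1}X)
 =\det(u)\sum_{p\in P^R}\det(p)\,p(F_u)(X).
\]
So it suffices to show that $F_u=\pi_R G_u$ and that $G_u$ is $R$-invariant. Lemma~\ref{lem:relative-BGG} then gives $\sum_{p\in P^R}\det(p)p(\pi_RG_u)=\pi_P\,\partial_{w_P^R}G_u$, which is \eqref{eq:exact-block}.

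\textbf{Step 1: $F_u=\pi_RG_u$.} This reduces to the identity $\pi_{R'}(u^{-1}Y)=\pi_R(Y)$. For $\beta\in\Phi_{R'}^+$ we have $\beta(u^{-1}Y)=(u\beta)(Y)$. By \eqref{eq:intersection-positive-systems}, $u$ maps $\Phi_{R'}^+$ bijectively onto $\Phi_R^+$, so
\[
\pi_{R'}(u^{-1}Y)=\prod_{\beta\in\Phi_{R'}^+}(u\beta)(Y)=\pi_R(Y),
\]
with no sign. Hence $F_u=\pi_RG_u$ on the regular set. Both sides are smooth ($G_u$ by Remark~\ref{rem: smooth normalized}), so the identity holds on all of $V$ by density.

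\textbf{Step 2: $R$-invariance of $G_u$.} Take $r\in R$ and put $r'=u^{-1}ru\in R'\subset Q$. Then $u^{-1}r^{-1}Y=r'^{-1}u^{-1}Y$, and we check the three factors of $G_u$ separately.
\begin{itemize}
\item The exponential factor is unchanged, because $Q$ fixes $\lambda_Q^\perp$.
\item $A^{\Phi_Q}_{\lambda_Q}$ is $Q$-skew, so it picks up $\det(r')$.
\item $\pi_{R'}$ is $R'$-skew, so it also picks up $\det(r')$.
\end{itemize}
The two signs cancel, so $r(G_u)=G_u$ on the regular set, and hence everywhere by continuity. The $Q$-skewness of $A^{\Phi_Q}_{\lambda_Q}$ needs one comment: $Q$ acts trivially on the orthogonal complement of $\operatorname{span}\Phi_Q$, so $A^{\Phi_Q}_{\lambda_Q}(q^{-1}Z)=\sum_{q'}\det(q')e^{i\langle qq'\lambda_Q,Z\rangle}=\det(q)A^{\Phi_Q}_{\lambda_Q}(Z)$.

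\textbf{Main obstacle.} Step 2 is the point needing the most care. One must make sure that $\pi_{R'}$, built from the positive system $\Phi_{R'}^+$ of the standard parabolic $R'\subset Q$, is genuinely $R'$-skew on all of $V$ (it is, since $R'$ is a Weyl group with that positive system). One must also check that the smooth extensions are compatible, which follows from uniqueness of continuous extensions off the root hyperplanes. After these two steps, \eqref{eq:exact-block} is a direct substitution into \eqref{eq:relative-BGG}.
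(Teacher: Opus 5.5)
Your proposal is correct and follows essentially the same route as the paper. You establish $F_u=\pi_RG_u$ via $u\Phi_{R'}^+=\Phi_R^+$, show that $G_u$ is $R$-invariant, and substitute into Lemma~\ref{lem:block} and the relative BGG--Demazure identity. The one cosmetic difference is in the invariance step: you check $R$-invariance of $G_u$ directly, factor by factor, using $r'=u^{-1}ru\in R'\subset Q$ and the $R'$-skewness of $\pi_{R'}$. The paper instead first shows that $F_u$ is $uQu^{-1}$-skew and then divides by the $R$-skew factor $\pi_R$.
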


\begin{proof}
By Lemma~\ref{lem:double-cosets}, we have
\( u\Phi_{R'}^+=\Phi_R^+\). 
Consequently,
\[
 \pi_{R'}(u^{-1}Y)=
 \prod_{\alpha\in \Phi_{R'}^+}\alpha(u^{-1}Y)=
 \prod_{\alpha\in \Phi_{R}^+}\alpha(Y)=
 \pi_R(Y).\]
Thus by \eqref{eq:G}, we have $F_u(Y)=\pi_R(Y)G_u(Y)$.

We claim that \(F_u\) is \(R\)-skew. 
In fact, \(F_u\) is \(uQu^{-1}\)-skew.  It is convenient first to conjugate by \(u\).  Define
\[
\widetilde F_u(Z):=(F_u\circ u)(Z)=F_u(uZ)
=
e^{i\langle\lambda_Q^\perp,Z\rangle}
A_{\lambda_Q}^{\Phi_Q}(Z).
\]
Since \(Q\) fixes \(\lambda_Q^\perp\), the exponential factor is
\(Q\)-invariant, whereas \(A_{\lambda_Q}^{\Phi_Q}\) is \(Q\)-skew.
Hence \(\widetilde F_u\) is \(Q\)-skew.
Conjugating back, we conclude that \(F_u\) is
\(uQu^{-1}\)-skew.  Indeed, if \(q\in Q\), then
\[
\begin{aligned}
F_u\bigl((uqu^{-1})^{-1}Y\bigr)
&=
\widetilde F_u(q^{-1}u^{-1}Y) \\
&=
\det(q)\,\widetilde F_u(u^{-1}Y) \\
&=
\det(uqu^{-1})\,F_u(Y).
\end{aligned}
\]
Since
\(
R=P\cap uQu^{-1}\subset uQu^{-1}
\), 
it follows that \(F_u\) is \(R\)-skew. 

Since \(\pi_R\) is also \(R\)-skew and
\(F_u=\pi_R G_u\), it follows that \(G_u\) is \(R\)-invariant.
By Lemma \ref{lem:block} and \(F_u=\pi_R G_u\), we have
\[
 B_{u,\lambda}(X)
 =
 \det(u)
 \sum_{p\in P^R}\det(p)\,p(\pi_R G_u)(X).
\]
Lemma~\ref{lem:relative-BGG} therefore applies to \(G_u\) and gives
the desired identity.
\end{proof}

\section{A local derivative estimate}

We first prove a local derivative estimate for Weyl alternants divided by
the root product of a parabolic subsystem whose root values at
$X$ are small compared with $\|\lambda\|^{-1}$, as stated in
Proposition~\ref{prop:local-jets} below. We refer to Theorem~\ref{thm:jets} as the global derivative estimate.

We argue by induction on the rank, with all derivative orders
available at each lower rank. For a target order $k$, the
relative operator in \eqref{eq:exact-block} may require
lower-rank estimates of orders up to $k+|\Phi^+|$.
In Section~\ref{sec:real-and-spherical-consequences}, we deduce
Theorem~\ref{thm:jets} from Proposition~\ref{prop:local-jets} by dividing by the remaining root product
and applying Leibniz's rule.

Let $\Theta$ be a parabolic root subsystem of $\Phi$, that is,
$\Theta=w(\Phi\cap\operatorname{span}_{\mathbb R}(I))$ for some
$w\in W$ and $I\subset\Delta$. Put
\[
 \Theta^+=\Theta\cap\Phi^+,
 \qquad
 \pi_\Theta(X)=\prod_{\alpha\in\Theta^+}\alpha(X).
\]
The quotient \(A_\lambda^\Phi/\pi_\Theta\), initially defined where
\(\pi_\Theta\neq0\), has a canonical smooth extension to all of \(V\); see
Remark~\ref{rem: smooth normalized}.

\begin{proposition}[Parabolic normalized derivative estimate]
\label{prop:local-jets}
For every integer $k\geq0$, there exist constants
$c_{\Phi,k},C_{\Phi,k}>0$ with the following property.  Let $\Theta\subset\Phi$ be a parabolic root subsystem, and let
$\lambda\in V^*$ and $X\in V$ satisfy
\begin{equation}\label{eq:theta-small}
 \|\lambda\|\,|\alpha(X)|\leq c_{\Phi,k}
 \qquad\text{for all }\alpha\in\Theta.
\end{equation}
Then, for all unit vectors
$\xi_1,\ldots,\xi_k\in V$,
\begin{align}
 \left|
 D_{\xi_1}\cdots D_{\xi_k}
 \left(\frac{A_\lambda^\Phi}{\pi_\Theta}\right)(X)
 \right|
 \notag& \leq
 C_{\Phi,k}
 \sum_{w\in W}
 \sum_{\substack{
 S\subset\Phi^+\setminus\Theta^+\\
 |S|\leq k}}
 \|\lambda\|^{k-|S|}
 \prod_{\alpha\in\Theta^+\cup S}
 |\langle w\lambda,\alpha^\vee\rangle|
 \notag\\[-1mm]
 &\qquad\qquad\times
 \prod_{\alpha\in
 \Phi^+\setminus(\Theta^+\cup S)}
 \min\!\left\{
 1,\,
 |\alpha(X)|
 |\langle w\lambda,\alpha^\vee\rangle|
 \right\}.
\label{eq:jet-estimate}
\end{align}
\end{proposition}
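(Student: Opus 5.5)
We argue by induction on the rank of $\Phi$, assuming the proposition, for all derivative orders, for every root system of smaller rank. Rank zero is trivial.

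\textbf{Reductions.} Both sides of \eqref{eq:jet-estimate} are unchanged when $\lambda$ is replaced by $v\lambda$ with $v\in W$: the left side up to sign, the right side by reindexing the sum over $w$. Replacing $X$ by $vX$ and $\Theta$ by $v\Theta$ changes the left side only by an orthogonal change of the directions $\xi_j$ and a sign, while the right side is reindexed by roots up to sign. So we may assume that $\lambda$ is dominant and that $\Theta=\Phi_I$ is standard, although $X$ need not then be dominant. In fact it is more convenient to make $X$ dominant and let $\Theta$ be general. The orthogonal component of $\lambda$ outside the root span contributes only an exponential factor, which Leibniz's rule absorbs into the factor $\|\lambda\|^{k-|S|}$.

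\textbf{Choice of $P$ and $Q$.} Write $\lambda_i=\langle\lambda,\alpha_i^\vee\rangle$. By a pigeonhole argument over the at most $r$ gaps among the sorted values $\lambda_i$, choose a threshold $\tau$ with $\tau\asymp\|\lambda\|$ so that every $\lambda_i$ is either $\le\tau$ or $\ge M\tau$, where $M$ is large depending on $\Phi$ and $k$. Let $Q=W_J$ with $J=\{i:\lambda_i\le\tau\}$. Then $J\ne\Delta$, so $\Phi_Q$ has smaller rank, and every root $\beta\notin\Phi_Q$ satisfies $|\langle\lambda,\beta^\vee\rangle|\asymp\|\lambda\|$.

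On the spatial side, let $P=W_I$ with $I=\{\alpha\in\Delta:\alpha(X)\le\delta\|\lambda\|^{-1}\}$, where the small constant $\delta$ is chosen compatibly with $c_{\Phi,k}$ so that $\Theta\subset\Phi_P$ after conjugation. Decompose $A^\Phi_\lambda=\sum_{u\in\mathcal D}B_{u,\lambda}$ and apply Lemma~\ref{lem:exact-block}:
\[
\frac{B_{u,\lambda}}{\pi_\Theta}=\det(u)\,\frac{\pi_P}{\pi_\Theta}\,\partial_{w_P^R}G_u .
\]
The factor $\pi_P/\pi_\Theta$ is a polynomial. Its roots in $\Phi_P^+\setminus\Theta^+$ are small at $X$, and they are distributed by Leibniz's rule: either they are differentiated, which produces the unit-size terms collected in $S$, or they stay as $|\alpha(X)|$.

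\textbf{Estimating $\partial_{w_P^R}G_u$.} Use Lemma~\ref{lem:integral}. This produces $n=|\Phi_P^+\setminus\Phi_R^+|$ further derivatives of $G_u$, evaluated at points $Y$ with $\|Y-X\|\lesssim\delta\|\lambda\|^{-1}$ by Lemma~\ref{lem:evaluation}. All relevant root values at $Y$ are therefore comparable to those at $X$, up to additive errors of size $\|\lambda\|^{-1}$, which the $\min\{1,\cdot\}$ structure tolerates. Now
\[
G_u(Y)=e^{i\langle\lambda_Q^\perp,u^{-1}Y\rangle}\,\bigl(A^{\Phi_Q}_{\lambda_Q}/\pi_{R'}\bigr)(u^{-1}Y).
\]
Derivatives that fall on the exponential give powers of $\|\lambda\|$. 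Derivatives that fall on the lower-rank quotient are handled by the induction hypothesis for $\Phi_Q$ with parabolic subsystem $\Phi_{R'}$. Its smallness hypothesis holds because $\Phi_{R'}\subset u^{-1}\Phi_P$ and $\|\lambda_Q\|\le\|\lambda\|$.

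\textbf{Bookkeeping via the four root classes \eqref{eq:four-root-classes}.}
\begin{itemize}
\item Class~4 roots $\beta$ lie in neither $\Phi_P$ nor $u\Phi_Q$. The entry $u^{-1}\beta\notin\Phi_Q$ gives $|\langle w\lambda,\beta^\vee\rangle|\asymp\|\lambda\|$. Since $\beta\notin\Phi_P$, $X$ dominant gives $\beta(X)\gtrsim\delta\|\lambda\|^{-1}$, so the product is $\gtrsim1$ and the factor $\min\{1,\cdot\}\asymp1$ costs nothing.
\item Class~3 roots are exactly the lower-rank factors that the induction supplies for $\Phi_Q^+\setminus\Phi_{R'}^+$.
\item Class~2 roots $\Phi_R^+$ are where the induction supplies the factors $|\langle w\lambda,\alpha^\vee\rangle|$.
\item Class~1 roots $\Phi_P^+\setminus\Phi_R^+$ number exactly $n$. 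Each of the $n$ derivatives from $\partial_{w_P^R}$ must be converted into a factor $|\langle w\lambda,\alpha^\vee\rangle|$ for $\alpha$ in class~1, times a small value $|\alpha(X)|$ taken from $\pi_P$. When such a derivative lands on the exponential or on a class-3 factor it produces $\|\lambda\|$, and since class-1 roots are not in $u\Phi_Q$, $\|\lambda\|\asymp|\langle w\lambda,\alpha^\vee\rangle|$ there. When it lands on the lower-rank alternant, the induction converts it into either a power of $\|\lambda\|$ or a new $S$-factor.
\end{itemize}
Finally, sum over $p\in P^R$ and $q\in Q$, noting that $w=puq$ ranges over $W$.

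\textbf{Main obstacle.} The hardest step is this last matching. We must ensure that the exact count $n=\ell(w_P^R)$, which comes from minimality of $u$, pairs each extra derivative with a distinct class-1 root. The bound must not lose $\|\lambda\|$ against a small spectral coordinate, and the constants $\tau$, $M$, $\delta$, $c_{\Phi,k}$ must be chosen consistently across induction levels, since at rank $r$ the induction uses orders up to $k+|\Phi^+|$. I would first check this matching carefully in type $A_2$, against \cite{ZhangSU3}, before writing out the general argument.
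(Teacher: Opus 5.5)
Your architecture matches the paper's. It has the spectral-gap subgroup $Q$, the spatial-threshold subgroup $P$, minimal double-coset representatives, the exact block formula \eqref{eq:exact-block}, and the iterated integral. The induction is applied to $\Phi_Q$ with parabolic subsystem $\Phi_{R'}$ and parameter $\lambda_Q$ at the points $u^{-1}Y$, and the root factors are matched through the four root classes with $w=uq$.

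You obtain the lower-rank smallness from a small spatial threshold $\delta$ and the trivial bound $\|\lambda_Q\|\le\|\lambda\|$. The paper instead uses a small spectral ratio $\|\lambda_Q\|/\|\lambda\|\le C\eps$. Your variant is legitimate. For $\beta\in\Phi_{R'}$ you get $\|\lambda_Q\|\,|(u\beta)(Y)|\lesssim\delta$, which suffices once $\delta$ is below the lower-rank constants for orders $\le k+|\Phi^+|$ and $c_{\Phi,k}\le\delta$. The class-3 and class-4 lower bounds then carry $\delta$-dependent constants. One small point: after the relative BGG identity no sum over $p\in P^R$ remains. The block is bounded using only $w=uq$, and you reach all of $W$ by enlarging a nonnegative sum.

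The genuine gap is the step you flag as the main obstacle, and as sketched it does not close. Write $M=\|\lambda\|$ (not your gap constant) and $m=\|\lambda_Q\|$. Let $b=k-|S_P|$ be the number of the $k$ derivatives that fall on $\partial_{w_P^R}G_u$. When $j$ of the resulting $n+b$ derivatives fall on $h_Q=A^{\Phi_Q}_{\lambda_Q}/\pi_{R'}$, the induction returns $m^{j-s}$ together with $s$ \emph{deleted} class-3 factors $|\langle q\lambda_Q,\beta^\vee\rangle|$. These are not powers of $M$.

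You cannot simply put these $\beta$ into $S$, for two reasons. First, $s$ may exceed $b$, so the constraint $|S|\le k$ and the exponent $k-|S|$ would fail. Second, such deletions supply no factor $M$ to convert $|\alpha(X)|$ for a class-1 root $\alpha$ into $\min\{1,|\alpha(X)|\,|\langle w\lambda,\alpha^\vee\rangle|\}$.

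The missing idea is an absorption inequality. For $\beta\in\Phi_Q^+\setminus\Phi_{R'}^+$ one has $u\beta\notin\Phi_P$, hence $|(u\beta)(X)|\gtrsim M^{-1}$. Also $|\langle q\lambda_Q,\beta^\vee\rangle|\lesssim M$. Together these give
\[
 |\langle q\lambda_Q,\beta^\vee\rangle|
 \lesssim M\min\{1,|(u\beta)(X)|\,|\langle q\lambda_Q,\beta^\vee\rangle|\}.
\]
So any surplus deletion can be undone at the cost of exactly one factor $M$. Keep $t=\min\{s,b\}$ of the deleted roots as $S_Q$ and absorb the rest, $S_{\mathrm{abs}}$. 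Then
\[
 M^{n+b-j}m^{j-s}\prod_{\beta\in S_{\mathrm{low}}}|\langle q\lambda_Q,\beta^\vee\rangle|
 =M^nM^{b-t}\Bigl(\frac mM\Bigr)^{j-s}\prod_{\beta\in S_Q}|\langle q\lambda_Q,\beta^\vee\rangle|
 \prod_{\beta\in S_{\mathrm{abs}}}\frac{|\langle q\lambda_Q,\beta^\vee\rangle|}{M}.
\]
Use $m\le M$, and bound each absorbed factor by its $\min\{1,\cdot\}$ factor via the inequality above. Only then does $M^n$ match the $n=|\Phi_P^+\setminus\Phi_R^+|$ class-1 roots exactly, with $|S_P|+|S_Q|\le k$ and the correct power $M^{k-|S_P|-|S_Q|}$. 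Without this step the induction does not close.
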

We begin the proof of Proposition \ref{prop:local-jets}. We first prove the result for spectral parameters lying in the root span.

\subsection{First reductions}
\label{subsec: reduction}

For $u\in W$ one has
\begin{align}\label{eq: skew}
 A_{u\lambda}^\Phi(X)=\det(u)A_\lambda^\Phi(X),
 \qquad
 A_\lambda^\Phi(uX)=\det(u)A_\lambda^\Phi(X).
\end{align}

If $\lambda$ is singular, then some reflection $s_\alpha$ fixes it. The first identity above implies that 
$A_\lambda^\Phi\equiv0$, so \eqref{eq:jet-estimate} is immediate. Hence, for
the proof, we may assume that $\lambda$ is regular.

For $k\in\mathbb{Z}_{\geq 0}$, define
\begin{align}
 \mathcal M_{\Phi,\Theta}^{(k)}(\lambda,X)
 :=\sum_{v\in W}
 \sum_{\substack{S\subset\Phi^+\setminus\Theta^+\\|S|\le k}}
 &\norm{\lambda}^{\,k-|S|}
 \prod_{\alpha\in\Theta^+\cup S}\abs{\ip{v\lambda}{\alpha^\vee}}
\notag\\[-1mm]
 &\times
 \prod_{\alpha\in
   \Phi^+\setminus(\Theta^+\cup S)}
\min\{1,\abs{\alpha(X)}\abs{\ip{v\lambda}{\alpha^\vee}}\}.
\label{eq:jet-majorant}
\end{align}
With this notation, the right-hand side of \eqref{eq:jet-estimate} is
$C_{\Phi,k}\mathcal M_{\Phi,\Theta}^{(k)}(\lambda,X)$.  Thus the
conclusion of Proposition~\ref{prop:local-jets} may be written compactly as
\[
 \abs{
 D_{\xi_1}\cdots D_{\xi_k}
 \left(\frac{A_\lambda^\Phi}{\pi_\Theta}\right)(X)}
 \le C_{\Phi,k}\mathcal M_{\Phi,\Theta}^{(k)}(\lambda,X).
\]

The above estimate is covariant under independent Weyl actions on the spectral
and spatial data. More precisely, if \(r,s\in W\), then there exists
a sign \(\varepsilon_{r,s,\Theta}\in\{\pm1\}\) such that
\begin{align}\label{eq: covariance}
D_{s\xi_1}\cdots D_{s\xi_k}
\left(\frac{A_{r\lambda}^\Phi}{\pi_{s\Theta}}\right)(sX)
=
\varepsilon_{r,s,\Theta}
D_{\xi_1}\cdots D_{\xi_k}
\left(\frac{A_\lambda^\Phi}{\pi_\Theta}\right)(X).
\end{align}
The above identity follows from the skew symmetries  \eqref{eq: skew}, 
the identity
\(
\pi_{s\Theta}(sX)=\pm\pi_\Theta(X)\), 
and the chain rule. 
Moreover, the transformation
\(
(\lambda,X,\Theta)
\longmapsto
(r\lambda,sX,s\Theta)
\)
merely reindexes the nonnegative summands in
\eqref{eq:jet-majorant}, and preserves the smallness condition \eqref{eq:theta-small}.
By this covariance, we may assume that both $\lambda$ and $X$ are
dominant. Since $\lambda$ is regular, we then have
\begin{equation}\label{eq:positive-simple-frequencies}
\lambda_i:=\langle\lambda,\alpha_i^\vee\rangle>0.
\end{equation}
Since $\lambda$ lies in the root span, 
\begin{equation}\label{eq:norm-simple-frequencies}
\max_i\lambda_i\asymp_\Phi\|\lambda\|.
\end{equation}

\subsection{Spectral and spatial parabolic subgroups}

The double-coset formula \eqref{eq:exact-block}  expresses each block in terms of a
lower-rank alternant, thereby providing the lower-rank input for the
induction. In this subsection, we construct the standard parabolic subgroups
\(P\) and \(Q\) of \(W\) to which that formula will be applied.

The spectral parabolic subgroup \(Q\) will be generated by the simple reflections associated with the relatively singular directions of \(\lambda\), identified by separating the small simple-coroot spectral coordinates \(\langle\lambda,\alpha_i^\vee\rangle\) from those comparable to \(\|\lambda\|\). From now on, we put 
\[M=\norm{\lambda}.\]

\begin{lemma}[Spectral separation]\label{lem:spectral}
Given $\eps>0$, there are constants $C_{\Phi,\eps},c_{\Phi,\eps}$ with the following
property. For every regular dominant $\lambda$, there is a proper subset
$J\subsetneq\Delta$ such that, with
\( Q=W_J\),
one has
\begin{align}
 \abs{\ip{\lambda}{\beta^\vee}}
   &\le\eps M,
      &&\beta\in\Phi_Q,\label{eq:lower-frequency}\\
 c_{\Phi,\eps}M
   \le\abs{\ip{q\lambda}{\beta^\vee}}
   &\le C_{\Phi,\eps}M,
      &&q\in Q,\quad\beta\in\Phi\setminus\Phi_Q.
\label{eq:top-frequency}
\end{align}
In particular, if
$|\langle\lambda,\beta^\vee\rangle|>\eps M$
for all $\beta\in\Phi$, we take $J=\varnothing$.
\end{lemma}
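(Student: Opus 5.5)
The plan is a gap (pigeonhole) argument on the simple-coroot coordinates $\lambda_i=\langle\lambda,\alpha_i^\vee\rangle>0$, $i=1,\dots,r$. By \eqref{eq:norm-simple-frequencies}, $\max_i\lambda_i\ge c_\Phi M$. We introduce thresholds $\eps_0=\eps/C_\Phi'$ and $\eps_{j+1}=\eps_j/K$ for $j=0,\dots,r$. Here $C_\Phi'$ will bound the coefficients expressing a coroot in simple coroots, and $K$ is a large constant depending on $\Phi$, fixed below. The intervals $(\eps_{j+1}M,\eps_jM]$, $j=0,\dots,r$, are $r+1$ disjoint ranges, and there are only $r$ values $\lambda_i$. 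So some interval contains none of them; fix such an index $j$. We then set
\[
J=\{\alpha_i:\lambda_i\le\eps_{j+1}M\},
\]
so that every $\alpha_i\notin J$ satisfies $\lambda_i>\eps_jM$. Provided $\eps_0 M<c_\Phi M$, the largest coordinate is not in $J$, and therefore $J\subsetneq\Delta$. If no coordinate is $\le\eps_{j+1}M$, then $J=\varnothing$; in particular this happens under the hypothesis of the ``in particular'' clause when $j=0$ is already a gap. More precisely, in that situation all $\lambda_i=\langle\lambda,\alpha_i^\vee\rangle>\eps M\ge\eps_0 M$, and we simply choose $j=0$ directly.

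For \eqref{eq:lower-frequency}, take $\beta\in\Phi_Q$. Then $\beta^\vee$ is an integer combination of $\alpha_i^\vee$ with $\alpha_i\in J$, with coefficients bounded by $C_\Phi'$. Hence $|\langle\lambda,\beta^\vee\rangle|\le C_\Phi' r\,\eps_{j+1}M\le\eps M$, after absorbing $r$ into $C'_\Phi$. The upper bound in \eqref{eq:top-frequency} is trivial: $|\langle q\lambda,\beta^\vee\rangle|\le\|\lambda\|\|\beta^\vee\|\le C_\Phi M$.

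The main step is the lower bound in \eqref{eq:top-frequency}. Since $|\langle q\lambda,\beta^\vee\rangle|=|\langle\lambda,(q^{-1}\beta)^\vee\rangle|$ and $Q$ preserves $\Phi\setminus\Phi_Q$, it suffices to treat $q=e$. We may also take $\beta\in\Phi^+\setminus\Phi_Q$. Write
\[
\beta^\vee=\sum_i n_i\alpha_i^\vee ,
\]
with all $n_i\ge0$ integers. Because $\beta\notin\Phi_Q$, some $n_i\ge1$ with $\alpha_i\notin J$. By positivity of all $\lambda_i$ (dominance and regularity),
\[
\langle\lambda,\beta^\vee\rangle\ge\lambda_i>\eps_jM\ge\eps_rM=:c_{\Phi,\eps}M.
\]
So in fact no cancellation has to be controlled, thanks to dominance. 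The constant $K$ only matters for keeping the lower scale separated, and a modest $K$ (even $K=2$) suffices, so the construction is robust.

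The only genuinely delicate point is ensuring that the constants depend only on $\Phi$ and $\eps$. This holds because the number of scales is $r+1$, so $c_{\Phi,\eps}\ge\eps K^{-r-1}/C_\Phi'$.
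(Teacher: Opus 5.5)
Your argument is correct, but it chooses $J$ differently from the paper.

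The paper sorts $\lambda_{i_1}\le\cdots\le\lambda_{i_r}$ and cuts at the topmost consecutive ratio gap $\lambda_{i_{\ell+1}}>A\lambda_{i_\ell}$, with $A\approx C_\Phi/\eps$. The lower bound in \eqref{eq:top-frequency} then comes from chaining ratios up to the maximum, $\lambda_{i_j}\ge A^{-(r-j)}\lambda_{i_r}$, so its $c_{\Phi,\eps}$ is of size $A^{-(r-1)}$. Properness of $J$ is automatic there because $\ell<r$.

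You instead cut at an absolute scale $\eps_{j+1}M$. Your lower bound then follows directly from $J$ being a sublevel set plus positivity of the coroot expansion, which gives $c_{\Phi,\eps}$ linear in $\eps$. Your pigeonhole step is in fact unnecessary. Every $\alpha_i\notin J$ already satisfies $\lambda_i>\eps_{j+1}M\ge\eps_{r+1}M$ by definition, so the single choice $J=\{\alpha_i:\lambda_i\le\eps_1M\}$ proves the lemma; the empty band only improves the constant by a factor $K$.

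What the paper's construction buys is a genuine multiplicative separation between the $Q$-coordinates and the rest, together with properness of $J$ for every $\eps$. The lemma as stated does not need that separation, nor does its later use, where $\eps=\eps_{\Phi,k}$ is fixed.

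The one point you should state as a reduction rather than a proviso is $\eps_0<c_\Phi$, which you need for $J\subsetneq\Delta$. This is harmless: the conclusions for $\min\{\eps,1\}$ imply those for $\eps$, including the $J=\varnothing$ clause, and you may enlarge $C'_\Phi$ beyond $1/c_\Phi$.
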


\begin{proof}
Order the numbers in \eqref{eq:positive-simple-frequencies} so that
\[
\lambda_{i_1}\leq\cdots\leq \lambda_{i_r}.
\]
Choose \(A=A_{\Phi,\eps}\) sufficiently large so that
$C_\Phi A^{-1}\leq\eps$, 
where \(C_\Phi\) absorbs the constants arising from the simple-coroot
expansions and \eqref{eq:norm-simple-frequencies}. If there is an index
\(\ell<r\) such that
\[
 \lambda_{i_{\ell+1}}>A  \lambda_{i_\ell},
\]
choose the largest such \(\ell\) and put
\[
J=\{\alpha_{i_1},\ldots,\alpha_{i_\ell}\}.
\]
Otherwise, set \(\ell=0\) and \(J=\varnothing\).

By the choice of \(\ell\), 
\[
 \lambda_{i_{j+1}}\leq A  \lambda_{i_j},
\qquad \ell+1\leq j<r.
\]
Consequently,
\[
A^{-(r-j)} \lambda_{i_r}\leq  \lambda_{i_j}\leq  \lambda_{i_r},
\qquad \ell+1\leq j\leq r.
\]
If \(\beta\in\Phi^+\setminus\Phi_Q^+\), then the simple-coroot
expansion of \(\beta^\vee\) contains some
\(\alpha_{i_j}^\vee\) with \(j\geq\ell+1\). The preceding inequalities and the upper and lower
bounds for the nonzero expansion coefficients therefore give
\[
\langle\lambda,\beta^\vee\rangle
\asymp_{\Phi,A} \lambda_{i_r}
\asymp_{\Phi} M.
\]
The group $Q$ preserves $\Phi_Q$ and its complement.  Thus, for
$\beta\notin\Phi_Q$, the root $q^{-1}\beta$ also lies outside $\Phi_Q$; applying the preceding comparison to $q^{-1}\beta$ 
gives
\eqref{eq:top-frequency}.

If \(\ell\geq1\) and \(\beta\in\Phi_Q^+\), then the simple-coroot
expansion of $\beta^\vee$ involves only \(\alpha_{i_1}^\vee,\ldots,
\alpha_{i_\ell}^\vee\). Hence
\[
\langle\lambda,\beta^\vee\rangle
\leq C_\Phi  \lambda_{i_\ell}
< C_\Phi A^{-1} \lambda_{i_{\ell+1}}
\leq C_\Phi A^{-1}M
\leq\eps M.
\]
When \(\ell=0\), the assertion is vacuous because
\(\Phi_Q=\varnothing\). Negative coroots are handled by taking absolute
values. This proves \eqref{eq:lower-frequency}.

\end{proof}

Put
\begin{equation}\label{eq:m-lower}
 m=\norm{\lambda_Q}.
\end{equation}
By \eqref{eq:spectral-split}, for $\beta\in\Phi_Q$ we have
$\langle\lambda_Q,\beta^\vee\rangle
=\langle\lambda,\beta^\vee\rangle$. 
If \(J=\varnothing\), then \(m=0\). Otherwise, applying
\eqref{eq:norm-simple-frequencies} to the root subsystem \(\Phi_Q\) and
using \eqref{eq:lower-frequency}, we obtain
\begin{equation}\label{eq:m-over-M}
m\leq C_\Phi\eps M.
\end{equation}
We choose \(\eps\) later, after fixing the constants in the relevant lower-rank estimates supplied by the induction hypothesis.

Now we construct the spatial parabolic subgroup $P$. 
As $X$ is dominant, $\alpha_i(X)\ge0$.
Motivated by the uncertainty principle, define
\begin{equation}\label{eq:spatial-I}
 I=\{\alpha_i:\alpha_i(X)\le M^{-1}\},
 \qquad P=W_I.
\end{equation}

Related decompositions adapted to root hyperplanes appear
in Marshall's study of higher-rank eigenfunctions and
spherical functions \cite[\S~2.4]{Marshall2016}, as well
as in the second author's earlier work on eigenfunctions
and Schr\"odinger equations on compact Lie groups
\cite{ZhangStrichartz,ZhangFourierRestriction,
ZhangCharacterRestriction}.

\begin{lemma}[Spatial separation]\label{lem:spatial}
For suitable $C_\Phi,c_\Phi>0$,
\begin{align}
 \abs{\alpha(X)}&\le C_\Phi M^{-1},
       &&\alpha\in\Phi_P,\label{eq:P-small}\\
 \abs{\alpha(X)}&\ge c_\Phi M^{-1},
       &&\alpha\in\Phi\setminus\Phi_P.\label{eq:P-large}
\end{align}
\end{lemma}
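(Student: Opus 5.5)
The plan is to use that $X$ is dominant, so every positive root is a nonnegative integer combination of simple roots, and to compare $\alpha(X)$ with the simple-root values $\alpha_i(X)\geq0$. Since the estimates are symmetric under $\alpha\mapsto-\alpha$ and $\Phi_P=-\Phi_P$, it suffices to treat $\alpha\in\Phi^+$. Write $\alpha=\sum_i n_i\alpha_i$ with $n_i\in\mathbb Z_{\geq0}$. Then $\alpha(X)=\sum_i n_i\alpha_i(X)$ is a sum of nonnegative terms. The coefficients $n_i$ are bounded above by a constant depending only on $\Phi$, since there are finitely many roots.

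For \eqref{eq:P-small}, take $\alpha\in\Phi_P^+$. Because $\Phi_P=\Phi\cap\operatorname{span}_{\mathbb R}(I)$, the expansion of $\alpha$ involves only simple roots in $I$. Each such root satisfies $0\leq\alpha_i(X)\leq M^{-1}$ by the definition \eqref{eq:spatial-I}. Hence
\[
0\leq\alpha(X)\leq\Bigl(\sum_i n_i\Bigr)M^{-1}\leq C_\Phi M^{-1},
\]
where $C_\Phi$ is the maximal height of a root.

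For \eqref{eq:P-large}, take $\alpha\in\Phi^+\setminus\Phi_P$. Its expansion must contain some $\alpha_j\notin I$ with $n_j\geq1$, since otherwise $\alpha$ would lie in $\operatorname{span}_{\mathbb R}(I)$ and hence in $\Phi_P$. All terms in the expansion are nonnegative, so
\[
\alpha(X)\geq n_j\alpha_j(X)\geq\alpha_j(X)>M^{-1}.
\]
This gives \eqref{eq:P-large} with $c_\Phi=1$.

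There is no real obstacle. The only points to check are the sign and nonnegativity bookkeeping, which is where dominance of $X$ is used, and the identification of $\Phi_P$ with the roots whose support lies in $I$. The latter follows from the definition $\Phi_I=\Phi\cap\operatorname{span}_{\mathbb R}(I)$ together with linear independence of $\Delta$.
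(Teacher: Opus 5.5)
Your proof is correct and follows the same approach as the paper: expand each positive root as a nonnegative integer combination of simple roots, use dominance of $X$ together with the definition of $I$, and handle negative roots by symmetry. You simply make explicit the details the paper calls immediate, namely the height bound, the constant $c_\Phi=1$, and the fact that roots of $\Phi_P$ are supported in $I$.
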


\begin{proof}
This follows immediately by writing each positive root as a nonnegative
linear combination of simple roots and using the definition of \(P\);
negative roots are handled by taking absolute values.
\end{proof}

\subsection{The inductive step}

We prove Proposition~\ref{prop:local-jets} by induction on $\rank\Phi$, with all
derivative orders available at each lower rank.  
For the formal rank-zero case, $\Phi=\Theta=\varnothing$ and
$A_0^\Phi=\pi_\Theta=1$, so the statement is immediate.
By our discussion in Section \ref{subsec: reduction}, we may assume that $\lambda$ is regular, and that both $\lambda$ and $X$ are
dominant.

For a fixed target order \(k\), only lower-rank derivative orders at most
\(k+|\Phi^+|\) will occur, so only finitely many constants from the
relevant lower-rank estimates are involved. Apply
Lemma~\ref{lem:spectral} with a parameter \(\eps>0\), to be chosen in the next subsection
in terms of these lower-rank constants, and let \(Q=W_J\) be the resulting spectral
parabolic subgroup. 
Since $J\subsetneq\Delta$, $\Phi_Q$ is a proper parabolic root subsystem
with $\rank\Phi_Q=|J|<\rank\Phi$, providing the required decrease
in rank for the induction.
Then choose the spatial parabolic subgroup $P=W_I$ by
\eqref{eq:spatial-I}.  Taking the constant in
\eqref{eq:theta-small} small
ensures
\begin{equation}\label{eq:theta-in-P}
 \Theta\subset\Phi_P.
\end{equation}
Indeed, for $\alpha\in\Theta^+=\Theta\cap\Phi^+$, dominance of $X$ and the
nonnegative simple-root expansion of $\alpha$ show that every simple root $\gamma$
in the support of $\alpha$ satisfies 
$$\gamma(X)\leq C_\Phi c_{\Phi,k}M^{-1}.$$  With
$c_{\Phi,k}$ small enough, the support of \(\alpha\) is
therefore contained in \(I\), so
$\alpha\in\Phi_P$.  This proves
\eqref{eq:theta-in-P}.

\subsection{Derivative estimate for a double-coset block}
\label{subsec:differentiated-block}

The goal of this subsection is to establish the derivative estimate for
the normalized double-coset block \(B_{u,\lambda}/\pi_\Theta\) required
for the induction. We combine the exact block identity
\eqref{eq:exact-block} with the iterated integral representation
\eqref{eq:integral} for divided-difference operators and then apply the
induction hypothesis to the lower-rank normalized alternant.

Applying
Lemma~\ref{lem:evaluation} with ambient root system \(\Phi\) and
\(W_I=P\), we obtain, for every \(p\in P\) and every evaluation point
\(Y=Y(\mathbf t,X)\) occurring in the integral representation of
\(\partial_p f(X)\),
\begin{equation}\label{eq:evaluation-all-p}
 \|Y-X\|
 \leq C_\Phi\max_{\alpha\in I}|\alpha(X)|
 \leq C_\Phi M^{-1}.
\end{equation}

The following lemma records the root-factor estimates needed to apply
the induction hypothesis at these evaluation points and subsequently
to match its majorant with the desired block majorant.  
Recall the intersection subgroups $R,R'$ from Section \ref{sec: double cosets}. 

\begin{lemma}[Root estimates at the evaluation points]
\label{lem:block-root-estimates}
Let \(Y=Y(\mathbf t,X)\) be an evaluation point
as above.  If \(\beta\in\Phi_Q\setminus\Phi_{R'}\), then
\begin{align}
 |(u\beta)(X)|
   &\geq c_\Phi M^{-1},\label{eq:endpoint-lower}\\
 |(u\beta)(Y)|
   &\leq C_\Phi |(u\beta)(X)|,\label{eq:path-upper}
\end{align}
and, for every \(q\in Q\),
\begin{equation}\label{eq:absorption}
 \frac{|\langle q\lambda_Q,\beta^\vee\rangle|}{M}
 \leq C_\Phi
 \min\{1,|(u\beta)(X)||\langle q\lambda_Q,\beta^\vee\rangle|\}.
\end{equation}
If \(\beta\in\Phi_{R'}\), then
\begin{equation}\label{eq:lower-rank-smallness}
 |(u\beta)(Y)|\leq C_\Phi M^{-1}.
\end{equation}
\end{lemma}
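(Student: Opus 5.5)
The plan is to treat each assertion directly from the two separation lemmas, using the root-system identities of Lemma~\ref{lem:double-cosets} to decide which side of the spatial dichotomy the root \(u\beta\) falls on. We then transfer the information from \(X\) to \(Y\) via the displacement bound \eqref{eq:evaluation-all-p}. Throughout, \(X\) is dominant, \(P=W_I\) is given by \eqref{eq:spatial-I}, and \(u\) is the minimal representative of \(PuQ\). Also \(\Phi_{R'}=\Phi_Q\cap u^{-1}\Phi_P\) and \(u\Phi_{R'}=\Phi_R\subset\Phi_P\) by \eqref{eq:intersection-root-systems}.

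\textbf{Lower bound \eqref{eq:endpoint-lower}.} Let \(\beta\in\Phi_Q\setminus\Phi_{R'}\). If \(u\beta\) were in \(\Phi_P\), then \(\beta\in\Phi_Q\cap u^{-1}\Phi_P=\Phi_{R'}\), a contradiction. Hence \(u\beta\in\Phi\setminus\Phi_P\), and \eqref{eq:P-large} of Lemma~\ref{lem:spatial} gives \(|(u\beta)(X)|\ge c_\Phi M^{-1}\).

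\textbf{Path bound \eqref{eq:path-upper}.} Since \(u\beta\) is a root, it has norm bounded by \(C_\Phi\). Linearity and \eqref{eq:evaluation-all-p} give
\[
|(u\beta)(Y)|\le|(u\beta)(X)|+\|u\beta\|\,\|Y-X\|\le|(u\beta)(X)|+C_\Phi M^{-1}.
\]
By \eqref{eq:endpoint-lower}, the term \(C_\Phi M^{-1}\) is at most \(C_\Phi c_\Phi^{-1}|(u\beta)(X)|\). This yields \eqref{eq:path-upper}.

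\textbf{Absorption \eqref{eq:absorption}.} Put \(a=|\langle q\lambda_Q,\beta^\vee\rangle|\). Since \(Q\) acts orthogonally and \(\|\lambda_Q\|\le\|\lambda\|=M\), we have \(a\le\|\beta^\vee\|M\le C_\Phi M\). Split according to the size of \(t=|(u\beta)(X)|\,a\).
\begin{itemize}
\item If \(t\ge1\), then the minimum equals \(1\), and \(a/M\le C_\Phi\).
\item If \(t<1\), then the minimum equals \(t\). Using \(M^{-1}\le c_\Phi^{-1}|(u\beta)(X)|\) from \eqref{eq:endpoint-lower}, we get \(a/M\le c_\Phi^{-1}t\).
\end{itemize}
Either way \eqref{eq:absorption} holds. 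This bound does not use the smallness of \(\eps\), so the constant is independent of the later choice of \(\eps\).

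\textbf{Smallness \eqref{eq:lower-rank-smallness}.} For \(\beta\in\Phi_{R'}\), we have \(u\beta\in u\Phi_{R'}=\Phi_R\subset\Phi_P\). Then \eqref{eq:P-small} gives \(|(u\beta)(X)|\le C_\Phi M^{-1}\). Adding the displacement \(\|u\beta\|\,\|Y-X\|\le C_\Phi M^{-1}\) from \eqref{eq:evaluation-all-p} gives the claim.

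None of the steps is a real obstacle. The only point needing care is the first: identifying that \(u\beta\notin\Phi_P\) exactly when \(\beta\notin\Phi_{R'}\). This relies on the precise identity \(\Phi_{R'}=\Phi_Q\cap u^{-1}\Phi_P\) from Lemma~\ref{lem:double-cosets}, which holds for the minimal representative \(u\). Everything else is the triangle inequality combined with the two-sided spatial dichotomy at scale \(M^{-1}\).
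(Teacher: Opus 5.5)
Your proposal is correct and follows the paper's proof essentially step for step. Both arguments use the same ingredients: the equivalence $u\beta\in\Phi_P\iff\beta\in\Phi_{R'}$ coming from $\Phi_{R'}=\Phi_Q\cap u^{-1}\Phi_P$, the two-sided spatial separation of Lemma~\ref{lem:spatial}, the displacement bound \eqref{eq:evaluation-all-p}, and the bound $|\langle q\lambda_Q,\beta^\vee\rangle|\le C_\Phi M$ combined with \eqref{eq:endpoint-lower} for \eqref{eq:absorption}; your case split on $t\ge1$ versus $t<1$ simply spells out what the paper leaves implicit.
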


\begin{proof}
Let $\beta\in\Phi_Q$. By \eqref{eq:evaluation-all-p}, we have 
\begin{align}\label{eq: ubetaY-X}
 |(u\beta)(Y)-(u\beta)(X)|
 \leq C_\Phi\|Y-X\|
 \leq C_\Phi M^{-1}. 
\end{align}
The intersection identities in
Lemma~\ref{lem:double-cosets} give
\begin{equation}\label{eq:intersection-root-equivalence}
 u\beta\in\Phi_P
 \quad\Longleftrightarrow\quad
 u\beta\in\Phi_P\cap u\Phi_Q=\Phi_R
 \quad\Longleftrightarrow\quad
 \beta\in\Phi_{R'}.
\end{equation}
Therefore, if \(\beta\notin\Phi_{R'}\), then \(u\beta\notin\Phi_P\), and
\eqref{eq:endpoint-lower} follows from Lemma~\ref{lem:spatial}. Then \eqref{eq:path-upper} follows from \eqref{eq:endpoint-lower}  and \eqref{eq: ubetaY-X}.

Since \(q\) acts orthogonally,
\( |\langle q\lambda_Q,\beta^\vee\rangle|\leq C_\Phi \|\lambda_Q\|\leq C_\Phi M\). 
Together with \eqref{eq:endpoint-lower}, this implies
\eqref{eq:absorption}.

Finally, if \(\beta\in\Phi_{R'}\), then
\(u\beta\in\Phi_P\).  Lemma~\ref{lem:spatial} therefore gives
$|(u\beta)(X)|\leq C_\Phi M^{-1}$. Together with \eqref{eq: ubetaY-X}, this implies
\eqref{eq:lower-rank-smallness}.
\end{proof}

Since the positive systems are induced from \(\Phi^+\), by \eqref{eq:theta-in-P}, 
\[
 \Theta^+=\Theta\cap\Phi^+
 \subset\Phi_P\cap\Phi^+
 =\Phi_P^+.
\]
For
\[
 S_P\subset\Phi_P^+\setminus\Theta^+,
 \qquad
 S_Q\subset\Phi_Q^+\setminus\Phi_{R'}^+,
\]
define
\begin{align}
 \mathcal B_{u,q}^{S_P,S_Q}(X)
 :={}&M^n
 \prod_{\alpha\in
   \Phi_P^+\setminus(\Theta^+\cup S_P)}
   |\alpha(X)|
 \prod_{\beta\in\Phi_{R'}^+\cup S_Q}
   |\langle q\lambda_Q,\beta^\vee\rangle|
 \notag\\[-1mm]
 &\times
 \prod_{\beta\in
   \Phi_Q^+\setminus(\Phi_{R'}^+\cup S_Q)}
 \min\{1,|(u\beta)(X)||\langle q\lambda_Q,\beta^\vee\rangle|\}.
\label{eq:block-majorant}
\end{align}

\begin{proposition}[Derivative estimate for a double-coset block]
\label{prop:differentiated-block} Assume that Proposition~\ref{prop:local-jets} holds for every root system of rank
strictly less than \(\rank\Phi\), for all derivative orders at most
\(k+|\Phi^+|\). Then for all unit vectors \(\xi_1,\ldots,\xi_k\in V\), one has
\begin{align}
 \left|
 D_{\xi_1}\cdots D_{\xi_k}
 \left(\frac{B_{u,\lambda}}{\pi_\Theta}\right)(X)
 \right|
 &\leq C_{\Phi,k}
 \sum_{q\in Q}
 \sum_{\substack{
   S_P\subset\Phi_P^+\setminus\Theta^+\\
   S_Q\subset\Phi_Q^+\setminus\Phi_{R'}^+\\
   |S_P|+|S_Q|\leq k}}
 M^{k-|S_P|-|S_Q|}
 \mathcal B_{u,q}^{S_P,S_Q}(X).
\label{eq:differentiated-block}
\end{align}
\end{proposition}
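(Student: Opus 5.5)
The plan is to start from the exact block identity of Lemma~\ref{lem:exact-block} and divide by $\pi_\Theta$. Since $\Theta^+\subset\Phi_P^+$ by \eqref{eq:theta-in-P}, this gives
\[
 \frac{B_{u,\lambda}}{\pi_\Theta}(X)=\pm\Bigl(\prod_{\alpha\in\Phi_P^+\setminus\Theta^+}\alpha(X)\Bigr)\,\partial_{w_P^R}G_u(X).
\]
Apply $D_{\xi_1}\cdots D_{\xi_k}$ and expand by Leibniz's rule. Each linear factor $\alpha(X)$ is either left alone or hit by exactly one derivative, which yields a bounded constant $\alpha(\xi_j)$. Let $S_P\subset\Phi_P^+\setminus\Theta^+$ denote the set of differentiated factors. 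The remaining $k_1=k-|S_P|$ derivatives then fall on $\partial_{w_P^R}G_u$, and what we must prove is
\[
 \bigl|D_{\eta_1}\cdots D_{\eta_{k_1}}\partial_{w_P^R}G_u(X)\bigr|\lesssim \sum_{q\in Q}\sum_{\substack{S_Q\subset\Phi_Q^+\setminus\Phi_{R'}^+\\ |S_Q|\le k_1}}M^{n+k_1-|S_Q|}\,\Pi_{q,S_Q}(X),
\]
where $\Pi_{q,S_Q}(X)$ is the $\lambda_Q$-part of $\mathcal B^{S_P,S_Q}_{u,q}$: the products over $\Phi_{R'}^+\cup S_Q$ and the min-factors. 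Multiplying back by the untouched factors $|\alpha(X)|$ then gives exactly \eqref{eq:differentiated-block}.

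Next, use Lemma~\ref{lem:integral}, with $w_P^R\in P$ of length $n$. Constant-coefficient derivatives commute with the affine substitution $X\mapsto Y(\mathbf t,X)$ because $Y$ is linear in $X$. We therefore obtain an integral over $[0,1]^n$ of derivatives of $G_u$ of total order $k_1+n$ at the points $Y(\mathbf t,X)$. The directions involved have norm $\lesssim_\Phi1$, since each $T_{\alpha,t}$ is a convex combination of orthogonal maps. By \eqref{eq:evaluation-all-p}, every such $Y$ satisfies $\|Y-X\|\lesssim M^{-1}$.

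Expand $G_u$ from \eqref{eq:G} by Leibniz again. Each derivative hitting the exponential $e^{i\langle\lambda_Q^\perp,u^{-1}Y\rangle}$ contributes at most $\|\lambda_Q^\perp\|\le M$. The remaining $j\le k_1+n$ derivatives hit $(A^{\Phi_Q}_{\lambda_Q}/\pi_{R'})(u^{-1}Y)$. We control these by the induction hypothesis, which is Proposition~\ref{prop:local-jets} for the lower-rank system $\Phi_Q$ with $\Theta=\Phi_{R'}$. This choice is legitimate: by Lemma~\ref{lem:double-cosets}, $R'$ is a standard parabolic subgroup of $Q$, so $\Phi_{R'}$ is a parabolic subsystem of $\Phi_Q$. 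We also need the smallness hypothesis. For $\beta\in\Phi_{R'}$ we have $|\beta(u^{-1}Y)|=|(u\beta)(Y)|\lesssim M^{-1}$ by \eqref{eq:lower-rank-smallness}, and $m=\|\lambda_Q\|\lesssim\eps M$ by \eqref{eq:m-over-M}. Hence choosing $\eps$ small relative to the lower-rank constants $c_{\Phi_Q,j}$, $j\le k+|\Phi^+|$, ensures \eqref{eq:theta-small}. The inductive bound is then a sum over $q\in Q$ and $S\subset\Phi_Q^+\setminus\Phi_{R'}^+$ with $|S|\le j$ of
\[
m^{j-|S|}\prod_{\Phi_{R'}^+\cup S}|\langle q\lambda_Q,\beta^\vee\rangle|\prod_{\text{rest}}\min\{1,|(u\beta)(Y)|\,|\langle q\lambda_Q,\beta^\vee\rangle|\}.
\]
Using $m\le M$ and \eqref{eq:path-upper}, we may replace $Y$ by $X$ in the min-factors at the cost of a constant. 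Combined with the exponential derivatives, the total power of $M$ becomes $M^{(k_1+n)-|S|}$.

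The main obstacle is the bookkeeping mismatch: $|S|$ may be as large as $k_1+n$, while the target requires $|S_Q|\le k_1$ with the prefactor $M^{n+k_1-|S_Q|}$. I will resolve this with the absorption estimate \eqref{eq:absorption}. For each surplus $\beta\in S$, it converts $|\langle q\lambda_Q,\beta^\vee\rangle|$ into $C\,M\min\{1,|(u\beta)(X)||\langle q\lambda_Q,\beta^\vee\rangle|\}$. Moving such a $\beta$ out of $S$ and into the min-product therefore raises the exponent $k_1+n-|S|$ by exactly one, so the bookkeeping stays consistent. Demoting surplus elements until $|S_Q|\le k_1$ produces a term of the required shape. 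The constants depend only on $\Phi$ and $k$, because all orders used are at most $k+|\Phi^+|$. Integrating over $[0,1]^n$ and summing the finitely many Leibniz terms then completes the estimate.
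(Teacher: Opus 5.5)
Your proposal is correct and follows the paper's proof essentially step for step: Leibniz on $\pi_P/\pi_\Theta$ to produce $S_P$, the iterated integral for $\partial_{w_P^R}G_u$, the splitting of $G_u$ into the exponential and $h_Q\circ u^{-1}$, the induction hypothesis with $\Theta=\Phi_{R'}$ justified by \eqref{eq:lower-rank-smallness} and \eqref{eq:m-over-M}, \eqref{eq:path-upper} to pass from $Y$ to $X$, and \eqref{eq:absorption} to demote surplus roots. The only difference is cosmetic: you demote surplus roots one at a time using $m\le M$, whereas the paper writes the exact identity \eqref{eq:bookkeeping-ratio} with $t=\min\{s,b\}$ and bounds the factor $(m/M)^{j-s}$.
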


\begin{proof}
Division of
\eqref{eq:exact-block} by \(\pi_\Theta\) gives
\begin{equation}\label{eq:normalized-block}
 \frac{B_{u,\lambda}(X)}{\pi_\Theta(X)}
 =
 \det(u)
 \frac{\pi_P(X)}{\pi_\Theta(X)}
 \partial_{w_P^R}G_u(X).
\end{equation}
Consider one term in the Leibniz expansion in which \(a\)
derivatives fall on \(\pi_P/\pi_\Theta\), and put \(b=k-a\).
Because
\[
 \frac{\pi_P(X)}{\pi_\Theta(X)}
 =
 \prod_{\alpha\in\Phi_P^+\setminus\Theta^+}\alpha(X)
\]
is a product of distinct linear forms, a nonzero such term has the
form
\begin{equation}\label{eq:external-prefactor}
 c_{S_P,\boldsymbol\xi}
 \prod_{\alpha\in
   \Phi_P^+\setminus(\Theta^+\cup S_P)}
 \alpha(X),
 \qquad
 S_P\subset\Phi_P^+\setminus\Theta^+,
 \qquad |S_P|=a,
\end{equation}
where \(\boldsymbol{\xi}=(\xi_1,\ldots,\xi_k)\), and
\(|c_{S_P,\boldsymbol{\xi}}|\leq C_{\Phi,k}\),
since each \(\xi_j\) is a unit vector.

Recall \eqref{eq:wpR}. Choose a reduced expression of \(w_P^R\) of length
\(
 n=\ell(w_P^R)=|\Phi_P^+|-|\Phi_R^+|\). 
By the iterated integral representation \eqref{eq:integral}, applied with
\(w=w_P^R\) and \(f=G_u\),
\[
 \partial_{w_P^R}G_u(X)
 =
 \int_{[0,1]^n}
 D_{v_1(\mathbf t)}\cdots D_{v_n(\mathbf t)}
 G_u\bigl(Y(\mathbf t,X)\bigr)\,\dd\mathbf t.
\]
Here the directions \(v_j(\mathbf t)\) and the linear map
\(X\mapsto Y(\mathbf t,X)\) are those defined in Lemma \ref{lem:integral} and have norms bounded in terms of \(\Phi\).
Passing the remaining \(b\) derivatives under the integral and using the chain rule \( D_\xi\{F(TX)\}=D_{T\xi}F(TX)\), the resulting integrand has the form 
\begin{equation}\label{eq:external-ordinary}
 D_{\zeta_1}\cdots D_{\zeta_{n+b}}G_u(Y),
 \qquad
 \|\zeta_r\|\leq C_\Phi.
\end{equation}

Write
\[
 G_u(Y)=e_u(Y)H_u(Y),
 \qquad
 e_u(Y)=e^{i\langle \lambda_Q^\perp,u^{-1}Y\rangle},
 \qquad
 H_u(Y)=
 h_Q(u^{-1}Y),
\]
where
\[
 h_Q(Z):=\frac{A_{\lambda_Q}^{\Phi_Q}(Z)}{\pi_{R'}(Z)}.
\]
 Consequently, for any directions
$\zeta_1,\ldots,\zeta_j$,
\begin{align}
 D_{\zeta_1}\cdots D_{\zeta_j}H_u(Y)
 =
 D_{u^{-1}\zeta_1}\cdots D_{u^{-1}\zeta_j}
 h_Q(u^{-1}Y).
\label{eq:lower-rank-chain-rule}
\end{align}
Suppose that \(j\) of the \(n+b\) derivatives in
\eqref{eq:external-ordinary} fall on \(H_u\).  The remaining
\(n+b-j\) derivatives fall on \(e_u\). Since
\(\|\lambda_Q^\perp\|\leq\|\lambda\|=M\), each derivative falling on
\(e_u\) contributes at most \(C_\Phi M\), and their total contribution is
bounded by
\[
C_{\Phi,k}M^{n+b-j}.
\]
Since \(u\) is orthogonal, the directions on the right-hand side of
\eqref{eq:lower-rank-chain-rule} have the same norms as the corresponding
directions on the left, which are uniformly bounded by \eqref{eq:external-ordinary}.  The nonzero directions are normalized before
applying the induction hypothesis, with their norms absorbed into the
constant.

For $\beta\in\Phi_{R'}$, equations
\eqref{eq:lower-rank-smallness} and \eqref{eq:m-over-M} give
\[
m|\beta(u^{-1}Y)|
=m|(u\beta)(Y)|
\leq C_\Phi\frac{m}{M}
\leq C_\Phi\varepsilon.
\]
Choose $0<\varepsilon=\varepsilon_{\Phi,k}\leq1$ sufficiently
small that $C_\Phi\varepsilon$ is below the smallness constants
in \eqref{eq:theta-small} for every proper parabolic root
subsystem of $\Phi$ and every derivative order at most
$k+|\Phi^+|$. There are only finitely many such constants.
The induction hypothesis therefore applies to $h_Q$ at every point
$u^{-1}Y$, for all orders $0\leq j\leq n+b$.

Since \(n+b\leq |\Phi^+|+k\), the number of terms in the Leibniz
expansion is bounded in terms of \(\Phi\) and \(k\). Combining the
preceding discussion with the lower-rank derivative estimate for \(h_Q\)
at each order \(0\leq j\leq n+b\), and summing over these Leibniz terms,
we obtain
\begin{align}
\bigl|D_{\zeta_1}\cdots D_{\zeta_{n+b}}G_u(Y)\bigr|
&\leq
C_{\Phi,k}
\sum_{j=0}^{n+b}
\sum_{q\in Q}
\sum_{\substack{
 S_{\mathrm{low}}
 \subset\Phi_Q^+\setminus\Phi_{R'}^+\\
 |S_{\mathrm{low}}|\leq j}}
 M^{n+b-j}m^{j-|S_{\mathrm{low}}|}
 \notag\\[-1mm]
&\quad\times
\prod_{\beta\in\Phi_{R'}^+\cup S_{\mathrm{low}}}
 |\langle q\lambda_Q,\beta^\vee\rangle|
 \notag\\[-1mm]
&\quad\times
\prod_{\beta\in
 \Phi_Q^+\setminus
 (\Phi_{R'}^+\cup S_{\mathrm{low}})}
 \min\{1,|(u\beta)(Y)||\langle q\lambda_Q,\beta^\vee\rangle|\}.
\label{eq:external-lower-term}
\end{align}

For each $S_{\mathrm{low}}$, put 
$s=|S_{\mathrm{low}}|$. For bookkeeping, set \(t=\min\{s,b\}\), choose an arbitrary subset
\[
S_Q\subset S_{\mathrm{low}},
\qquad |S_Q|=t,
\]
and define the set of ``absorbed'' roots by
\[
S_{\mathrm{abs}}
:=S_{\mathrm{low}}\setminus S_Q.
\]
Since \(|S_{\mathrm{abs}}|=s-t\), one has the exact identity
\begin{align}
 &M^{n+b-j}m^{j-s}
   \prod_{\beta\in S_{\mathrm{low}}}|\langle q\lambda_Q,\beta^\vee\rangle|
 \notag\\
 &\qquad=
 M^nM^{b-t}
 \prod_{\beta\in S_Q}|\langle q\lambda_Q,\beta^\vee\rangle|
 \left(\frac mM\right)^{j-s}
 \prod_{\beta\in S_{\mathrm{abs}}}\frac{|\langle q\lambda_Q,\beta^\vee\rangle|}{M}.
\label{eq:bookkeeping-ratio}
\end{align}
Since \(s\leq j\), our choice of \(\varepsilon\) and
\eqref{eq:m-over-M} give
\[
\left(\frac{m}{M}\right)^{j-s}
\leq (C_\Phi\varepsilon)^{j-s}
\leq C_{\Phi,k}.
\] 
For every \(\beta\in S_{\mathrm{abs}}\), use
\eqref{eq:absorption}; for every ``undeleted'' root $\beta\in
 \Phi_Q^+\setminus
 (\Phi_{R'}^+\cup S_{\mathrm{low}})$, use
\eqref{eq:path-upper}. After multiplying by the prefactor in
\eqref{eq:external-prefactor}, each summand in the majorant
\eqref{eq:external-lower-term} is bounded, for the corresponding sets
\(S_P\) and \(S_Q\), by
\[
 C_{\Phi,k}M^{b-t}\mathcal B_{u,q}^{S_P,S_Q}(X).
\]
Moreover, since \(b=k-a\), \(a=|S_P|\), and \(t=|S_Q|\), we have
\[
 b-t=k-|S_P|-|S_Q|\geq0.
\]
Thus \((S_P,S_Q)\) is admissible in
\eqref{eq:differentiated-block}.  Moreover, each relevant application
of Leibniz's rule involves at most \(k+|\Phi^+|\) derivatives, and all
root subsets involved are subsets of \(\Phi^+\).  Hence any fixed
admissible pair \((S_P,S_Q)\) occurs with multiplicity bounded in terms
of \(\Phi\) and \(k\).  Absorbing these multiplicities into
\(C_{\Phi,k}\), summing the resulting bounds, and enlarging the
nonnegative sum to all admissible pairs proves
\eqref{eq:differentiated-block}.
\end{proof}

\subsection{Matching the ambient root factors}
\label{subsec:root-factor-matching}

For \(q\in Q\), put \(w=uq\). For $\alpha\in\Phi^+$, put $\beta=u^{-1}\alpha$. 
The four root classes in
\eqref{eq:four-root-classes} have the following spatial and spectral
behavior. 

\begin{lemma}[Spatial and spectral sizes for the four root classes]

\label{lem:four-class-estimates}

\begingroup
\renewcommand{\arraystretch}{1.35}
\setlength{\arraycolsep}{9pt}
\[
\begin{array}{c|c|c}
 \text{root class}
 & \text{spatial size}
 & \text{spectral size}
 \\ \hline
 \alpha\in \Phi_P^+\setminus\Phi_R^+
 & |\alpha(X)|\lesssim_\Phi M^{-1}
 & |\langle w\lambda,\alpha^\vee\rangle|\asymp_{\Phi,k} M
 \\
 \alpha\in \Phi_R^+
 & |\alpha(X)|\lesssim_\Phi M^{-1}
 & |\langle w\lambda,\alpha^\vee\rangle| =
 |\langle q\lambda_Q,\beta^\vee\rangle|
   \lesssim_\Phi m
 \\
 \alpha\in u(\Phi_Q^+\setminus\Phi_{R'}^+)
 & |\alpha(X)|\gtrsim_\Phi M^{-1}
 & |\langle w\lambda,\alpha^\vee\rangle| =
 |\langle q\lambda_Q,\beta^\vee\rangle|
  \lesssim_\Phi m
 \\
 \alpha\in \Phi^+\setminus(\Phi_P^+\cup u\Phi_Q^+)
 & |\alpha(X)|\gtrsim_\Phi M^{-1}
 & |\langle w\lambda,\alpha^\vee\rangle|\asymp_{\Phi,k} M
\end{array}
\]
\endgroup
\end{lemma}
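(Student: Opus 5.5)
The plan is to treat the two columns of the table separately: the spatial column comes from Lemma~\ref{lem:spatial}, and the spectral column from Lemma~\ref{lem:spectral}. In both cases the link is the double-coset structure of Lemma~\ref{lem:double-cosets}. Throughout, $w=uq$ with $q\in Q$, $\alpha\in\Phi^+$, and $\beta=u^{-1}\alpha$. By Weyl invariance,
\[
\langle w\lambda,\alpha^\vee\rangle=\langle q\lambda,\beta^\vee\rangle .
\]

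For the spatial sizes, the first two classes lie in $\Phi_P^+$, so \eqref{eq:P-small} gives $|\alpha(X)|\lesssim_\Phi M^{-1}$. The last two classes lie outside $\Phi_P$. For the fourth class this holds by definition. For the third class, $\alpha=u\beta$ with $\beta\in\Phi_Q\setminus\Phi_{R'}$, and the equivalence \eqref{eq:intersection-root-equivalence} shows $u\beta\notin\Phi_P$. In both cases \eqref{eq:P-large} then gives $|\alpha(X)|\gtrsim_\Phi M^{-1}$.

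For the spectral sizes, the key dichotomy is whether $\beta\in\Phi_Q$, which by definition means $\alpha\in u\Phi_Q$.
\begin{itemize}
\item[(i)] Suppose $\beta\in\Phi_Q$; this covers the second and third classes. By the splitting \eqref{eq:spectral-split}, $q\lambda=\lambda_Q^\perp+q\lambda_Q$, and $\lambda_Q^\perp$ is orthogonal to $\operatorname{span}_{\mathbb R}(\Phi_Q)\ni\beta^\vee$. Hence $\langle q\lambda,\beta^\vee\rangle=\langle q\lambda_Q,\beta^\vee\rangle$. Since $q$ is orthogonal and $\|\beta^\vee\|\lesssim_\Phi 1$, this quantity is $\lesssim_\Phi\|\lambda_Q\|=m$.
\item[(ii)] Suppose $\beta\notin\Phi_Q$; this covers the first and fourth classes. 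First check $\alpha\notin u\Phi_Q$. For the fourth class this follows because $\alpha$ is positive and not in $u\Phi_Q^+$, and $u\Phi_Q\cap\Phi^+=u\Phi_Q^+$ since $u\Phi_Q^+\subset\Phi^+$ by \eqref{eq:u-positivity} (applied to both signs). For the first class, if $\alpha\in u\Phi_Q$ then $\alpha\in\Phi_P\cap u\Phi_Q=\Phi_R$, contradicting $\alpha\notin\Phi_R^+$ (as $\alpha$ is positive). So $\beta\in\Phi\setminus\Phi_Q$, and \eqref{eq:top-frequency} gives $c_{\Phi,\eps}M\le|\langle q\lambda,\beta^\vee\rangle|\le C_{\Phi,\eps}M$.
\end{itemize}
Since $\eps=\eps_{\Phi,k}$ was fixed in terms of $\Phi$ and $k$, these constants depend only on $\Phi,k$. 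This accounts for the subscript in $\asymp_{\Phi,k}$.

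The main point needing care is the membership bookkeeping in case (ii), namely showing that the first and fourth classes avoid $u\Phi_Q$. This rests on the positivity $u\Phi_Q^+\subset\Phi^+$, which comes from minimality of $u$, together with the intersection identity $\Phi_R=\Phi_P\cap u\Phi_Q$. Everything else is a direct citation of the two separation lemmas.
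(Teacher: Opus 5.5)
Your proposal is correct and follows essentially the same route as the paper. The spatial bounds come from Lemma~\ref{lem:spatial}, and the spectral bounds come from splitting on whether $\alpha\in u\Phi_Q$, using $u\Phi_Q^+\subset\Phi^+$ from \eqref{eq:u-positivity} together with \eqref{eq:top-frequency}. The differences are cosmetic: the paper treats the first and fourth classes together as $\Phi^+\setminus u\Phi_Q^+$ rather than handling the first via $\Phi_R=\Phi_P\cap u\Phi_Q$, and you spell out the third-class spatial bound that the paper leaves implicit.
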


\begin{proof}
Recall Figure~\ref{fig:four-root-classes} for the four root classes. 
The spatial bounds follow from Lemma~\ref{lem:spatial}. 

For the spectral bounds, we apply Lemma~\ref{lem:spectral}.
If \(\alpha=u\beta\in u\Phi_Q^+\), then Weyl invariance and the
decomposition \(\lambda=\lambda_Q^\perp+\lambda_Q\) give
\[
 |\langle w\lambda,\alpha^\vee\rangle|
  =
 |\langle q\lambda_Q,\beta^\vee\rangle|
 \lesssim_\Phi m.
\]
This proves the spectral assertions for the second and third classes.
On the other hand, assume \(\alpha\in\Phi^+\setminus u\Phi_Q^+\). 
By \eqref{eq:u-positivity}, \(u\Phi_Q^+\subset\Phi^+\). Thus \(\alpha\notin u\Phi_Q\).  Applying \eqref{eq:top-frequency} to
\(\beta=u^{-1}\alpha\) gives
\(
 |\langle w\lambda,\alpha^\vee\rangle|
 =
 |\langle q\lambda,\beta^\vee\rangle|
 \asymp_{\Phi,k} M\). 
This proves the spectral assertions for the first and fourth classes.
\end{proof}

Let \(S_P,S_Q\) be as in Proposition~\ref{prop:differentiated-block}. Recall 
\begin{align}\label{eq:SP-SQ-recall}
S_P\subset\Phi_P^+\setminus\Theta^+,\qquad  uS_Q\subset u(\Phi_Q^+\setminus\Phi_{R'}^+)\subset \Phi^+\setminus\Phi^+_P.
\end{align}
Then we may define the disjoint union
\begin{equation}\label{eq:final-deleted-set}
 S:=S_P\sqcup uS_Q\subset \Phi^+\setminus\Theta^+.
\end{equation}

\begin{lemma}[Root-factor matching]
\label{lem:deleted-factor-matching}
\begin{align}
 \mathcal B_{u,q}^{S_P,S_Q}(X)
 \leq C_{\Phi,k}
 \prod_{\alpha\in\Theta^+\cup S}
   |\langle w\lambda,\alpha^\vee\rangle|
 \prod_{\alpha\in
   \Phi^+\setminus(\Theta^+\cup S)}
   \min\{1,\abs{\alpha(X)}\abs{\ip{w\lambda}{\alpha^\vee}}\}.
\label{eq:deleted-factor-matching}
\end{align}
\end{lemma}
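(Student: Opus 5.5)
Both sides of \eqref{eq:deleted-factor-matching} factor over positive roots, so the plan is to compare them factor by factor. We use the disjoint decomposition \eqref{eq:four-root-classes} and the table in Lemma~\ref{lem:four-class-estimates}, with $w=uq$ and $\beta=u^{-1}\alpha$. Write the right-hand side as a product over $\alpha\in\Phi^+$ of factors $F_\alpha$. Here $F_\alpha=|\langle w\lambda,\alpha^\vee\rangle|$ if $\alpha\in\Theta^+\cup S$, and $F_\alpha=\min\{1,|\alpha(X)||\langle w\lambda,\alpha^\vee\rangle|\}$ otherwise. Recall $\Theta^+\subset\Phi_P^+$ by \eqref{eq:theta-in-P}, $S_P\subset\Phi_P^+\setminus\Theta^+$, and $uS_Q$ lies in class 3. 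The left-hand side $\mathcal B_{u,q}^{S_P,S_Q}(X)$ has three parts: a factor $M^n$ with $n=|\Phi_P^+|-|\Phi_R^+|$, the spatial factors $|\alpha(X)|$ for $\alpha\in\Phi_P^+\setminus(\Theta^+\cup S_P)$, and the lower-rank spectral factors indexed by $\beta\in\Phi_Q^+$. We distribute the power $M^n$ as one factor of $M$ to each root in class~1, $\Phi_P^+\setminus\Phi_R^+$, which has exactly $n$ elements.

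The classes are then handled as follows.
\begin{enumerate}
\item Class 1 ($\alpha\in\Phi_P^+\setminus\Phi_R^+$). Here $|\langle w\lambda,\alpha^\vee\rangle|\asymp M$. If $\alpha\in\Theta^+\cup S_P$, the left side carries only $M$, and $M\lesssim F_\alpha$. Otherwise the left side carries $M|\alpha(X)|$. This is $\asymp|\alpha(X)||\langle w\lambda,\alpha^\vee\rangle|$, and by \eqref{eq:P-small} it is also $\lesssim 1$, so it is $\lesssim F_\alpha$.
\item Class 2 ($\alpha\in\Phi_R^+=u\Phi_{R'}^+$). The left side carries $|\langle q\lambda_Q,\beta^\vee\rangle|=|\langle w\lambda,\alpha^\vee\rangle|$, possibly times $|\alpha(X)|$ if $\alpha\notin\Theta^+\cup S_P$. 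In the first situation the factor equals $F_\alpha$. In the second, the factor $|\alpha(X)||\langle w\lambda,\alpha^\vee\rangle|$ is also bounded by $C_\Phi\,m/M\le C$, using \eqref{eq:P-small} and \eqref{eq:m-over-M}, so it is $\lesssim F_\alpha$. Roots of $\Theta^+\cup S_P$ lying in $\Phi_R^+$ get exactly $|\langle w\lambda,\alpha^\vee\rangle|$ on both sides.
\item Class 3 ($\alpha=u\beta$ with $\beta\in\Phi_Q^+\setminus\Phi_{R'}^+$). If $\beta\in S_Q$, then $\alpha\in uS_Q\subset S$ and both sides carry $|\langle w\lambda,\alpha^\vee\rangle|$. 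Otherwise both sides carry the identical $\min$ factor, since $\langle q\lambda_Q,\beta^\vee\rangle=\langle w\lambda,\alpha^\vee\rangle$ and $(u\beta)(X)=\alpha(X)$.
\item Class 4. The left side carries the factor $1$. By \eqref{eq:P-large} and the table, $|\alpha(X)||\langle w\lambda,\alpha^\vee\rangle|\gtrsim c_\Phi M^{-1}\cdot M\gtrsim 1$, so $F_\alpha\gtrsim_{\Phi,k}1$. Note also that $\alpha\notin\Theta^+\cup S$ here.
\end{enumerate}

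The main point needing care is the bookkeeping of the $M^n$ power. We must check two things: $|\Phi_P^+\setminus\Phi_R^+|=n$ exactly, which is \eqref{eq:wpR}, and no class-1 root in $\Theta^+$ loses a factor. For a root in $\Theta^+$, the left side has no spatial factor, while the right side has $|\langle w\lambda,\alpha^\vee\rangle|\asymp M$ in class 1 and an exact match in class 2. The constants depend on $\varepsilon=\varepsilon_{\Phi,k}$ through the lower bound $c_{\Phi,\varepsilon}$ in \eqref{eq:top-frequency}, hence on $\Phi,k$. Multiplying the finitely many factorwise bounds gives \eqref{eq:deleted-factor-matching}.
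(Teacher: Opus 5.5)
Your proposal is correct and follows the paper's own proof essentially step for step. Both use the four-class decomposition \eqref{eq:four-root-classes} and assign one factor of $M$ from $M^n$ to each root of $\Phi_P^+\setminus\Phi_R^+$. On $\Phi_R^+$ both use the bound $m/M\lesssim\varepsilon$, on $u(\Phi_Q^+\setminus\Phi_{R'}^+)$ the factors match exactly, and on the exterior class $|\alpha(X)|\,|\langle w\lambda,\alpha^\vee\rangle|\gtrsim_{\Phi,k}1$.
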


\begin{proof}
Set
\[
\begin{aligned}
 \mathcal C_1&=\Phi_P^+\setminus\Phi_R^+,
 &\qquad
 \mathcal C_2&=\Phi_R^+,\\
 \mathcal C_3&=u(\Phi_Q^+\setminus\Phi_{R'}^+),
 &
 \mathcal C_4&=\Phi^+\setminus(\Phi_P^+\cup u\Phi_Q^+),
\end{aligned}
\]
and put
\[
 D=\Theta^+\sqcup S.
\]
By \eqref{eq:SP-SQ-recall} and \eqref{eq:theta-in-P},
\[
 D\cap\Phi_P^+=\Theta^+\cup S_P,
 \qquad
 D\cap\mathcal C_3=uS_Q,
 \qquad
 D\cap\mathcal C_4=\varnothing.
\]

We estimate separately the contribution from each of the four root
classes. Since
$n=|\mathcal C_1|$, 
the factor \(M^n\) supplies one copy of \(M\) for every root in
\(\mathcal C_1\).  
For \(\alpha\in\mathcal C_1\), the preceding lemma gives
\[
 |\langle w\lambda,\alpha^\vee\rangle|\asymp_{\Phi,k} M
 \qquad\text{and}\qquad
 |\alpha(X)|\,|\langle w\lambda,\alpha^\vee\rangle|
 \lesssim_{\Phi,k} 1.
\]
Hence 
\[
 M|\alpha(X)|
 \asymp_{\Phi,k}
 |\alpha(X)|\,|\langle w\lambda,\alpha^\vee\rangle|
 \lesssim_{\Phi,k} \min\{1,\abs{\alpha(X)}\abs{\ip{w\lambda}{\alpha^\vee}}\}.
\]
Using the bound $ |\langle w\lambda,\alpha^\vee\rangle|\asymp_{\Phi,k} M$ for \(\alpha\in\mathcal C_1\cap D\),
and the
above estimate for \(\alpha\in\mathcal C_1\setminus D\), 
noting that $\mathcal C_1\setminus D= \Phi_P^+\setminus(\Phi_R^+\cup\Theta^+\cup S_P)$, we have
\begin{align}\label{eq:matching-P-minus-R}
& M^n
 \prod_{\alpha\in \Phi_P^+\setminus(\Phi_R^+\cup\Theta^+\cup S_P)}|\alpha(X)|\nonumber\\
& \lesssim_{\Phi,k}
 \prod_{\alpha\in\mathcal C_1\cap D}
   |\langle w\lambda,\alpha^\vee\rangle|
 \prod_{\alpha\in\mathcal C_1\setminus D}
   \min\{1,\abs{\alpha(X)}\abs{\ip{w\lambda}{\alpha^\vee}}\}.
\end{align}

For \(\alpha\in\mathcal C_2\), 
\(\beta=u^{-1}\alpha\in\Phi_{R'}^+\). The preceding lemma and \eqref{eq:m-over-M} give
\[
 |\alpha(X)|\,|\langle q\lambda_Q,\beta^\vee\rangle|
 \lesssim_\Phi \frac{m}{M}
 \lesssim_\Phi\varepsilon
 \leq 1.
\]
It follows that
\( |\alpha(X)|\,|\langle q\lambda_Q,\beta^\vee\rangle|
 \lesssim_\Phi \min\{1,\abs{\alpha(X)}\abs{\ip{w\lambda}{\alpha^\vee}}\}\). 
Consequently, noting that 
$\mathcal C_2\setminus D=\Phi_R^+\setminus(\Theta^+\cup S_P)$, we have 
\begin{align}\label{eq:matching-R}
 &\prod_{\alpha\in\Phi_R^+\setminus(\Theta^+\cup S_P)}|\alpha(X)|
 \prod_{\beta\in\Phi_{R'}^+}
   |\langle q\lambda_Q,\beta^\vee\rangle|\nonumber\\
 &\lesssim_\Phi
 \prod_{\alpha\in\mathcal C_2\cap D}
   |\langle w\lambda,\alpha^\vee\rangle|
 \prod_{\alpha\in\mathcal C_2\setminus D}
   \min\{1,\abs{\alpha(X)}\abs{\ip{w\lambda}{\alpha^\vee}}\}.
\end{align}

For the third class, 
\(\beta=u^{-1}\alpha\in\Phi_Q^+\setminus\Phi_{R'}^+\). 
 Note that 
 \[ \min\{1,|(u\beta)(X)||\langle q\lambda_Q,\beta^\vee\rangle|\}
 =
 \min\{1,\abs{\alpha(X)}\abs{\ip{w\lambda}{\alpha^\vee}}\},\]
 which we use for $\beta\notin S_Q$. On the other hand, \(\beta\in S_Q\) is equivalent to \(u\beta\in D\).
It follows that
\begin{equation}\label{eq:matching-Q-minus-Rprime}
\begin{aligned}
 &\prod_{\beta\in S_Q}|\langle q\lambda_Q,\beta^\vee\rangle|
 \prod_{\beta\in
   \Phi_Q^+\setminus(\Phi_{R'}^+\cup S_Q)}
   \min\{1,|(u\beta)(X)||\langle q\lambda_Q,\beta^\vee\rangle|\}
 \\
 &\qquad =
 \prod_{\alpha\in\mathcal C_3\cap D}
   |\langle w\lambda,\alpha^\vee\rangle|
 \prod_{\alpha\in\mathcal C_3\setminus D}
   \min\{1,\abs{\alpha(X)}\abs{\ip{w\lambda}{\alpha^\vee}}\}.
\end{aligned}
\end{equation}

Finally, for \(\alpha\in\mathcal C_4\), the preceding lemma gives 
\( |\alpha(X)| |\langle w\lambda,\alpha^\vee\rangle|\gtrsim_{\Phi,k} 1\),
so that
\( \min\{1,\abs{\alpha(X)}\abs{\ip{w\lambda}{\alpha^\vee}}\}\asymp_{\Phi,k} 1\). 
We obtain
\begin{equation}\label{eq:matching-exterior}
 1\lesssim_{\Phi,k}
 \prod_{\alpha\in\mathcal C_4}\min\{1,\abs{\alpha(X)}\abs{\ip{w\lambda}{\alpha^\vee}}\}.
\end{equation}

Multiplying the four classwise comparisons
\eqref{eq:matching-P-minus-R}--\eqref{eq:matching-exterior} and using
the disjoint decomposition \eqref{eq:four-root-classes}, we obtain the desired estimate \eqref{eq:deleted-factor-matching}. 
\end{proof}

\subsection{Completion of the induction}
\label{subsec:completion-induction}

Since
\[
 A_\lambda^\Phi=\sum_{u\in\mathcal D}B_{u,\lambda},
\]
Proposition~\ref{prop:differentiated-block} and
Lemma~\ref{lem:deleted-factor-matching} give
\begin{align*}
 \left|
   D_{\xi_1}\cdots D_{\xi_k}
   \left(\frac{A_\lambda^\Phi}{\pi_\Theta}\right)(X)
  \right| 
 &\leq C_{\Phi,k}
   \sum_{u\in\mathcal D}\sum_{q\in Q}
   \sum_{\substack{
     S_P\subset\Phi_P^+\setminus\Theta^+\\
     S_Q\subset\Phi_Q^+\setminus\Phi_{R'}^+\\
     |S_P|+|S_Q|\leq k}}
   M^{k-|S|}
   \prod_{\alpha\in\Theta^+\cup S}
     |\langle uq\lambda,\alpha^\vee\rangle| \\
 &\quad\times
   \prod_{\alpha\in\Phi^+\setminus(\Theta^+\cup S)}
     \min\{1,\abs{\alpha(X)}
     \abs{\ip{uq\lambda}{\alpha^\vee}}\},
\end{align*}
where \(S=S_P\sqcup uS_Q\). 
For fixed \(u\), the pair \((S_P,S_Q)\) is determined by \(S\), because
\(S_P=S\cap\Phi_P^+\) and \(uS_Q=S\setminus\Phi_P^+\). The sum over these pairs may therefore be enlarged to a sum over all
subsets \(S\subset\Phi^+\setminus\Theta^+\) satisfying \(|S|\leq k\). Moreover, the Weyl elements appearing
above are only \(uq\), corresponding to \(p=e\) in each double coset
\(PuQ\).  Since
\[
 \bigsqcup_{u\in\mathcal D}uQ\subset W
\]
and all summands are nonnegative, enlarging the sum once more gives
\[
 \left|
 D_{\xi_1}\cdots D_{\xi_k}
 \left(\frac{A_\lambda^\Phi}{\pi_\Theta}\right)(X)
 \right|
 \leq
 C_{\Phi,k}\mathcal M_{\Phi,\Theta}^{(k)}(\lambda,X).
\]
This completes the simultaneous
induction on the rank and proves Proposition~\ref{prop:local-jets} when the spectral parameters lie in the root span.

\begin{remark}[The choice \(p=e\)]
Although a general element of \(PuQ\) has the form \(puq\), the preceding
matching uses only \(w=uq\), corresponding to \(p=e\).  This reflects
the uncertainty principle underlying the construction of \(P\).
Indeed, \(P\) is generated by reflections associated with roots that are
small at \(X\) on the scale \(M^{-1}\), and hence
$\|p^{-1}X-X\|\lesssim_\Phi M^{-1}$ for $p\in P$. It follows that
$$ \bigl|\langle uq\lambda,p^{-1}X-X\rangle\bigr|
 \lesssim_{\Phi} 1.$$
The same argument, with the root classes relabelled appropriately, would
give an analogous matching for \(w=puq\) for any \(p\in P\). Since the whole majorant is a nonnegative sum over all
\(w\in W\), only one such matching is needed, and we take \(p=e\).
\end{remark}

We finally remove the restriction that the spectral parameter lie in
the root span. Write the orthogonal decomposition
$\lambda=\lambda_E+\lambda_\perp$, where
$E=\operatorname{span}_{\mathbb R}(\Phi)$. Since $W$ fixes $\lambda_\perp$,
\[
\frac{A_\lambda^\Phi(X)}{\pi_\Theta(X)}
=e^{i\langle\lambda_\perp,X\rangle}
\frac{A_{\lambda_E}^\Phi(X)}{\pi_\Theta(X)}.
\]
Moreover,
$\langle w\lambda_E,\alpha^\vee\rangle
=\langle w\lambda,\alpha^\vee\rangle$ for all $w\in W$ and
$\alpha\in\Phi$. Shrink the constant in the smallness condition \eqref{eq:theta-small} to the minimum of
the constants already obtained for derivative orders $0\leq j\leq k$.
In a Leibniz term with $j$ derivatives falling on the normalized alternant,
the spectral power is bounded by
\[
\|\lambda_\perp\|^{k-j}
\|\lambda_E\|^{j-|S|}
\leq\|\lambda\|^{k-|S|}.
\]
The smallness condition for $\lambda_E$ follows from that for
$\lambda$, since $\|\lambda_E\|\leq\|\lambda\|$. Applying the estimates already proved and summing the finitely many
Leibniz terms therefore proves Proposition~\ref{prop:local-jets}
for every $\lambda\in V^*$.

\section{Global derivative estimates and spherical function bounds}
\label{sec:real-and-spherical-consequences}

\subsection{Proof of the global derivative estimate}
\label{subsec:global-alternant-jets}

\begin{proof}[Proof of Theorem~\ref{thm:jets}]
If $\Phi=\varnothing$, then $\pi=1$ and
$A_\lambda^\Phi(X)=e^{i\langle\lambda,X\rangle}$, so the estimate is
immediate. If $\Phi\neq\varnothing$ and $\lambda$ is singular, then
$A_\lambda^\Phi\equiv0$, and the estimate is again immediate.
We may therefore assume that $\lambda$ is regular and put
$M=\|\lambda\|>0$. By Weyl covariance, with the derivative directions transformed
together with $X$ as in \eqref{eq: covariance}, and by the Weyl
invariance of the majorant, we may assume that $X$ is dominant.

Choose a sufficiently small constant $\delta_{\Phi,k}>0$ such that for
\[
 I=\{\alpha_i\in\Delta:
       \alpha_i(X)\leq\delta_{\Phi,k}M^{-1}\},
 \qquad
 \Theta=\Phi_I,
\]
we have 
\begin{equation}\label{eq:theta-small-for-global-jets}
 M|\alpha(X)|
 \leq\min_{0\leq j\leq k}c_{\Phi,j},
 \qquad \alpha\in\Theta^+.
\end{equation}
This choice of $\Theta$ is held fixed when taking derivatives at $X$. 
Thus Proposition~\ref{prop:local-jets} applies to
$A_\lambda^\Phi/\pi_\Theta$ for every derivative order at most $k$.
On the other hand, positivity and integrality of the simple-root
coefficients give
\begin{equation}\label{eq:outside-theta-spatial-lower}
 |\alpha(X)|\geq\delta_{\Phi,k}M^{-1},
 \qquad
 \alpha\in\Phi^+\setminus\Theta^+.
\end{equation}

Write
\[
 p_\Theta(X)
 :=\prod_{\alpha\in\Phi^+\setminus\Theta^+}\alpha(X),
 \qquad
 \pi(X)=\pi_\Theta(X)p_\Theta(X).
\]
It follows from \eqref{eq:outside-theta-spatial-lower} that, for unit
directions $\eta_1,\ldots,\eta_r$ and $0\leq r\leq k$,
\begin{equation}\label{eq:reciprocal-large-root-product}
 \left|
 D_{\eta_1}\cdots D_{\eta_r}p_\Theta(X)^{-1}
 \right|
 \leq
 C_{\Phi,k}M^r
 \prod_{\alpha\in\Phi^+\setminus\Theta^+}
 |\alpha(X)|^{-1}.
\end{equation}

Apply the Leibniz rule to the identity
\[
 \frac{A_\lambda^\Phi}{\pi}
 =\frac{A_\lambda^\Phi}{\pi_\Theta}\,p_\Theta^{-1},
\]
which holds on a neighborhood of $X$.
Consider a term in which $j$ derivatives fall on
$A_\lambda^\Phi/\pi_\Theta$. After applying
Proposition~\ref{prop:local-jets} and
\eqref{eq:reciprocal-large-root-product}, the summand in the majorant indexed by $w\in W$ and
$S\subset\Phi^+\setminus\Theta^+$, with $|S|\leq j$, 
is bounded by
\begin{equation}\label{eq:global-jet-intermediate}
\begin{aligned}
 &C_{\Phi,k}M^{k-|S|}
 \prod_{\alpha\in\Phi^+}
   |\langle w\lambda,\alpha^\vee\rangle|
 \prod_{\alpha\in S}|\alpha(X)|^{-1}
 \\
 &\qquad\times
 \prod_{\alpha\in
   \Phi^+\setminus(\Theta^+\cup S)}
 \min\left\{1,
 \bigl(|\alpha(X)|\,|\langle w\lambda,\alpha^\vee\rangle|\bigr)^{-1}
 \right\}.
\end{aligned}
\end{equation}
Here we used
$\min\{1,ab\}/a=b\min\{1,(ab)^{-1}\}$ for $a,b>0$.

If $\alpha\in\Theta^+$, then
\eqref{eq:theta-small-for-global-jets} and $|\langle w\lambda,\alpha^\vee\rangle|\leq C_\Phi M$ give $|\alpha(X)|\,|\langle w\lambda,\alpha^\vee\rangle|
\leq C_{\Phi,k}$, and hence
\[
 \bigl(1+|\alpha(X)|\,|\langle w\lambda,\alpha^\vee\rangle|\bigr)^{-1}
 \asymp_{\Phi,k}1.
\]
If $\alpha\in S\subset\Phi^+\setminus\Theta^+$, then
\eqref{eq:outside-theta-spatial-lower} and
$|\langle w\lambda,\alpha^\vee\rangle|\leq C_\Phi M$ give
\begin{equation}\label{eq:deleted-root-global-absorption}
 |\alpha(X)|^{-1}
 \leq C_{\Phi,k}M
 \bigl(1+|\alpha(X)|\,|\langle w\lambda,\alpha^\vee\rangle|\bigr)^{-1}.
\end{equation}
Moreover,
\[
 \min\left\{1,
 \bigl(|\alpha(X)|\,|\langle w\lambda,\alpha^\vee\rangle|\bigr)^{-1}
 \right\}
 \leq
 2\bigl(1+|\alpha(X)|\,|\langle w\lambda,\alpha^\vee\rangle|\bigr)^{-1}.
\]
Consequently, \eqref{eq:global-jet-intermediate} is bounded by
\[
 C_{\Phi,k}M^k
 \prod_{\alpha\in\Phi^+}
 \frac{|\langle w\lambda,\alpha^\vee\rangle|}
 {1+|\alpha(X)|\,|\langle w\lambda,\alpha^\vee\rangle|}.
\]
Summing over $w$, the ``deletion sets'' $S$, and the finitely many
Leibniz terms proves \eqref{eq:global-alternant-jets}.
\end{proof}

\subsection{Euclidean spherical functions}
\label{subsec:spherical-functions}
We first recall the explicit formula for spherical functions on
complex groups and their relation to Euclidean spherical functions;
see \cite[Chapter~IV, Sections~4--5]{Helgason2000}.
For $\zeta\in\mathfrak a^*_{\mathbb C}$, define
$A_\zeta^\Phi$ by the same exponential sum as in
\eqref{eq:linear-alternant}. Put
\[
\pi^\vee(\nu)=\prod_{\alpha\in\Phi^+}
\langle\nu,\alpha^\vee\rangle,
\qquad
\pi(X)=\prod_{\alpha\in\Phi^+}\alpha(X),
\qquad N=|\Phi^+|.
\]
Let $\rho=\frac12\sum_{\alpha\in\Phi^+}\alpha$. Since every restricted
root has multiplicity two, the Harish-Chandra half-sum with
multiplicities is $2\rho$. Define
\[
\Delta_\Phi(X)
:=\sum_{w\in W}\det(w)e^{2(w\rho)(X)}
=\prod_{\alpha\in\Phi^+}2\sinh\alpha(X).
\]
Then, initially for regular parameters,
\begin{equation}\label{eq:complex-spherical-formula}
\varphi(\zeta,X)
=
\frac{\pi^\vee(2\rho)}{\pi^\vee(i\zeta)}
\frac{A_\zeta^\Phi(X)}{\Delta_\Phi(X)},
\qquad
\varphi_0(X)=\frac{2^N\pi(X)}{\Delta_\Phi(X)}.
\end{equation}
Consequently,
\begin{equation}\label{eq:complex-motion-relation}
\varphi(\zeta,X)
=\varphi_0(X)\mathcal E_\zeta^\Phi(X),
\qquad
\mathcal E_\zeta^\Phi(X)
:=\frac{\pi^\vee(\rho)}{\pi^\vee(i\zeta)}
\frac{A_\zeta^\Phi(X)}{\pi(X)}.
\end{equation}
Here $\mathcal E_\zeta^\Phi$ is the Euclidean spherical function. 
Both quotients have canonical extensions across the relevant singular
sets. Indeed,
\begin{equation}\label{eq:varphi0}
 \varphi_0(X)=\prod_{\alpha\in\Phi^+}
 \frac{\alpha(X)}{\sinh\alpha(X)},
\end{equation}
and $t/\sinh t$ extends smoothly at $t=0$ with value $1$.
For $\mathcal E_\zeta^\Phi$, apply the BGG--Demazure identity
first in $X$ to divide $A_\zeta^\Phi(X)$ by $\pi(X)$. The resulting quotient is
entire and $W$-skew in $\zeta$. By
Remark~\ref{rem:complex-BGG}, we may apply the same identity
to the coroot system in the complex spectral variable to
divide by $\pi^\vee(\zeta)$ as well. The integral formulas
show that $\mathcal E_\zeta^\Phi(X)$ and all its spatial
derivatives are smooth in $X$, entire in $\zeta$, and jointly
continuous. These extensions are unique by density of the
regular sets.

Theorem~\ref{thm:jets} gives the following derivative estimate
for Euclidean spherical functions with real spectral parameters. 

\begin{proposition}\label{prop:global-normalized-jets}
For every integer $k\geq0$, one has
\begin{equation}\label{eq:global-normalized-jets}
 \left|
 D_{\xi_1}\cdots D_{\xi_k}
 \mathcal E_\lambda^\Phi(X)
 \right|
 \leq
 C_{\Phi,k}\|\lambda\|^k
 \mathcal R_\Phi(\lambda,X)
\end{equation}
for all $\lambda\in \mathfrak{a}^*$, $X\in \mathfrak{a}$, and unit vectors
$\xi_1,\ldots,\xi_k\in \mathfrak{a}$.
\end{proposition}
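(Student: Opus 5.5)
For regular real $\lambda$ the plan is to divide the estimate of Theorem~\ref{thm:jets} by the constant $\pi^\vee(i\lambda)/\pi^\vee(\rho)$ and absorb the resulting denominator into the majorant; singular $\lambda$ are then handled by continuity. By \eqref{eq:complex-motion-relation}, for regular $\lambda\in\mathfrak a^*$ we have
\[
 D_{\xi_1}\cdots D_{\xi_k}\mathcal E_\lambda^\Phi(X)
 =\frac{\pi^\vee(\rho)}{\pi^\vee(i\lambda)}\,
 D_{\xi_1}\cdots D_{\xi_k}\Bigl(\frac{A_\lambda^\Phi}{\pi}\Bigr)(X).
\]
Here $|\pi^\vee(\rho)|$ is a constant depending only on $\Phi$, and $|\pi^\vee(i\lambda)|=\prod_{\alpha\in\Phi^+}|\langle\lambda,\alpha^\vee\rangle|$.

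The key observation is that for each $w\in W$,
\[
 \prod_{\alpha\in\Phi^+}|\langle w\lambda,\alpha^\vee\rangle|
 =\prod_{\alpha\in\Phi^+}|\langle\lambda,(w^{-1}\alpha)^\vee\rangle|
 =|\pi^\vee(\lambda)|.
\]
This holds because $w^{-1}$ permutes $\Phi$, sending $\Phi^+$ onto a set containing exactly one root from each pair $\pm\beta$, and absolute values remove the signs.

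Applying Theorem~\ref{thm:jets}, each summand indexed by $w$ on its right-hand side is $|\pi^\vee(\lambda)|$ times $\prod_{\alpha\in\Phi^+}(1+|\alpha(X)||\langle w\lambda,\alpha^\vee\rangle|)^{-1}$. Dividing by $|\pi^\vee(i\lambda)|=|\pi^\vee(\lambda)|$ cancels this numerator exactly, and we obtain \eqref{eq:global-normalized-jets} with the majorant $\mathcal R_\Phi(\lambda,X)$ from \eqref{eq:spherical-root-majorant}, for all regular $\lambda$.

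The remaining step, which I expect to be the only delicate point, is singular $\lambda$, where the quotient formula is not directly usable. Here I will use the joint continuity recorded just before the proposition: $\mathcal E_\zeta^\Phi(X)$ and all its spatial derivatives extend continuously in $(\zeta,X)$. The right-hand side $\|\lambda\|^k\mathcal R_\Phi(\lambda,X)$ is also continuous in $\lambda$. So approximating a singular $\lambda$ by regular $\lambda_j\to\lambda$ and passing to the limit in the inequality gives the estimate for all $\lambda\in\mathfrak a^*$. The case $\lambda=0$ is covered by the same limit, since both sides are continuous there.
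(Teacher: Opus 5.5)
Your proposal is correct and follows essentially the same route as the paper. The paper also applies Theorem~\ref{thm:jets} for regular $\lambda$, uses that $|\pi^\vee(i\lambda)|=\prod_{\alpha\in\Phi^+}|\langle w\lambda,\alpha^\vee\rangle|$ for every $w$ to cancel the numerators (absorbing $|\pi^\vee(\rho)|$ into the constant), and then passes to singular $\lambda$ via the continuity of the spatial derivatives of $\mathcal E_\lambda^\Phi$ in $\lambda$. You additionally state the continuity of the right-hand side in $\lambda$, which the paper leaves implicit.
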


\begin{proof}
For $\lambda\in\mathfrak{a}^*$ and $w\in W$, we have
\[
 |\pi^\vee(i\lambda)|
 =|\pi^\vee(\lambda)|
 =\prod_{\alpha\in\Phi^+}
   |\langle w\lambda,\alpha^\vee\rangle|.
\]
For regular $\lambda$, we apply Theorem~\ref{thm:jets} in
\eqref{eq:complex-motion-relation} and dividing each summand by
this product gives \eqref{eq:global-normalized-jets}, with the
fixed factor $|\pi^\vee(\rho)|$ absorbed into $C_{\Phi,k}$.
The continuity of the spatial derivatives of
$\mathcal E_\lambda^\Phi$ in $\lambda$ allows us to approximate
any singular $\lambda$ by regular parameters and pass to the
limit, proving the estimate for every
$\lambda\in\mathfrak a^*$.
\end{proof}

\subsection{Extension to complex spectral parameters}
\label{subsec:complex-parameters}

We now extend the preceding estimate to arbitrary complex spectral parameters by the maximum principle on the
upper half-plane.

\begin{proposition}[Complex spectral parameters]
\label{prop:complex-normalized-jets}
For every integer $k\geq0$, there exists $C_{\Phi,k}>0$ such that
\begin{equation}\label{eq:complex-normalized-jets}
 \left|D_{\xi_1}\cdots D_{\xi_k}
 \mathcal E_\zeta^\Phi(X)\right|
 \leq C_{\Phi,k}\|\zeta\|^k
 e^{h_\Phi(\operatorname{Im}\zeta,X)}
 \mathcal R_\Phi(\zeta,X)
\end{equation}
for all $\zeta\in \mathfrak{a}^*_{\mathbb C}$, $X\in \mathfrak{a}$, and unit vectors
$\xi_1,\ldots,\xi_k\in \mathfrak{a}$.
\end{proposition}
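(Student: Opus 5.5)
We extend Proposition~\ref{prop:global-normalized-jets} from real to complex spectral parameters with a Phragm\'en--Lindel\"of argument in one complex variable, run along complex lines through $\lambda=\operatorname{Re}\zeta$. Fix $X\in\mathfrak a$ and unit vectors $\xi_1,\dots,\xi_k$. Write $\zeta=\lambda+i\eta$ and consider
\[
 f(z):=D_{\xi_1}\cdots D_{\xi_k}\mathcal E^\Phi_{\lambda+z\eta}(X),\qquad z\in\mathbb C .
\]
This function is entire in $z$, because $\mathcal E^\Phi_\zeta(X)$ and its spatial derivatives are entire in $\zeta$.

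The first step is to compare $f$ with the explicit exponential sum. For regular $\zeta$ we have $\mathcal E_\zeta=\pi^\vee(\rho)A_\zeta/(\pi^\vee(i\zeta)\pi)$. Each exponential term $e^{i\langle w\zeta,X\rangle}$ has modulus $e^{-\langle w\eta,X\rangle}\le e^{h_\Phi(\eta,X)}$. A crude bound for $f$ should therefore hold: exponential type in $z$, with $|f(z)|\lesssim (1+|z|)^{N_0}e^{|\operatorname{Im} z|\,h(\cdot)}$ for a suitable support function. To obtain this without dividing by vanishing factors, we use the iterated integral representation from Lemma~\ref{lem:integral} and Remark~\ref{rem:complex-BGG} in both $X$ and $\zeta$. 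The result expresses $\mathcal E_\zeta$ as an average of derivatives of exponentials over convex hulls of Weyl orbits, which gives polynomial growth times $e^{\max_w(-\langle w\operatorname{Im}\zeta,X\rangle)}$.

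We then divide by the exponential weight so as to apply the maximum principle. On the upper half-plane $\operatorname{Im} z>0$ the function $z\mapsto h_\Phi(\operatorname{Im}z\,\eta,X)=\operatorname{Im}z\cdot h_\Phi(\eta,X)$ is linear in $\operatorname{Im} z$. So $g(z)=f(z)e^{iz h_\Phi(\eta,X)}$ has modulus $|f|e^{-\operatorname{Im}z\,h_\Phi}$. By the crude bound, $g$ is of polynomial growth on the closed upper half-plane. On the real line $z=t$, Proposition~\ref{prop:global-normalized-jets} gives $|g(t)|\le C\|\lambda+t\eta\|^k\mathcal R_\Phi(\lambda+t\eta,X)$. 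We evaluate at $z=i$ to recover $\zeta=\lambda+i\eta$.

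The main obstacle is that the boundary majorant $\|\lambda+t\eta\|^k\mathcal R_\Phi(\lambda+t\eta,X)$ is not the modulus of a holomorphic function, so the maximum principle does not transfer it to $z=i$ directly. The plan is to dominate it by the modulus of a holomorphic function that is nonvanishing in the upper half-plane. For fixed $w$ and $\alpha$, the factor $(1+|\alpha(X)||\langle w(\lambda+t\eta),\alpha^\vee\rangle|)^{-1}$ is comparable on $\mathbb R$ to $|1+i\alpha(X)(a+tb)|^{-1}$ with $a,b$ real. Choosing the sign of $i$ so that the pole lies in the lower half-plane, this equals $|\psi_{w,\alpha}(t)|$ for a holomorphic $\psi_{w,\alpha}(z)=(1\pm i\alpha(X)(a+zb))^{-1}$ on the upper half-plane. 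Similarly, $\|\lambda+t\eta\|^k$ is comparable to $|{(1+\|\lambda\|+\|\eta\|-iz\|\eta\|)}^k|$ up to constants, or one can simply use $(1+\|\zeta\|)^k$-type bounds.

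The sum over $w$ is handled by one of two devices. The first is the maximum principle for the subharmonic function $\log|g|-\log\sum_w|\Psi_w|$. The second is to note that $\sum_w|\Psi_w|$ is comparable to $\max_w|\Psi_w|$, and to apply Phragm\'en--Lindel\"of separately on the regions where each term dominates; this is awkward. Instead I would use that $\log\sum_w|\Psi_w|$ is subharmonic, so $\log|g|-\log\sum_w|\Psi_w|$ is not obviously subharmonic. Better, I would bound $|g|/\bigl(\sum_w|\Psi_w|\bigr)$ via a harmonic-measure (Poisson integral) argument. On the half-plane, $\log|g(i)|\le\int P_i(t)\log|g(t)|\,dt$. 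Then we compare $\int P_i\log\mathcal R$ with $\log\mathcal R$ at $i$, using that each factor $|1+i c(a+tb)|^{-1}$ has $\log$ harmonic with correct boundary values. Applying Jensen/convexity to the sum over $w$ costs only the constant $|W|$.

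Finally, the boundary value at $z=i$ is compared with $\mathcal R_\Phi(\zeta,X)$, using $|1+ic(a+ib)|\asymp 1+|c||a+ib|$ up to the sign issue; this is fixed by choosing the orientation of each factor appropriately. The result is $(1+\|\zeta\|)^k$, or $\|\zeta\|^k$ after homogeneity/scaling in $\zeta$, times $e^{h_\Phi}\mathcal R_\Phi(\zeta,X)$.
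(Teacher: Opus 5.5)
Your overall route is the paper's: restrict to the line $z\mapsto\lambda+z\eta$, remove $e^{h_\Phi(\eta,X)\operatorname{Im}z}$, use Proposition~\ref{prop:global-normalized-jets} on $\mathbb R$, replace each root factor by a linear polynomial with its zero in the lower half-plane, and evaluate at $z=i$. Your crude growth bound, your treatment of $\|\lambda+t\eta\|^k$, the homogeneity reduction and the final comparison at $z=i$ are all fine.

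\textbf{The gap.} It lies in the step you yourself identify as the main obstacle: the sum over $w$. Write $\Psi_w=1/q_w$ with $q_w=\prod_\alpha\ell_{w,\alpha}$, and set $u_w=\log|\Psi_w|$. Each $u_w$ is harmonic on the upper half-plane, so $\int P_i u_w\,\mathrm dt=u_w(i)$. Your Poisson-integral route needs
\[
 \int_{\mathbb R}P_i(t)\log\sum_{w\in W}|\Psi_w(t)|\,\mathrm dt
 \le\log\sum_{w\in W}|\Psi_w(i)|+C.
\]
Jensen's inequality for the convex map $(u_w)_w\mapsto\log\sum_w e^{u_w}$ gives exactly the \emph{reverse} inequality; so does the trivial bound $\max_w u_w\ge u_{w'}$.

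For general harmonic $u_w$ the needed bound is simply false. Take $u_1$ to be the bounded harmonic function with boundary values $K$ on $(0,\infty)$ and $-K$ on $(-\infty,0)$, and take $u_2=-u_1$. Then the left side is about $K$, while the right side is $\log 2+C$. So ``costs only the constant $|W|$'' is unjustified. The obstruction you noticed for $\log|g|-\log\sum_w|\Psi_w|$ has been moved, not removed.

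\textbf{The missing idea.} What rescues the argument is the specific structure of the $q_w$, exploited through a common denominator as in the paper. Put $\mathcal Q=\prod_w q_w$ and $\mathcal P_w=\mathcal Q/q_w$. These are polynomials of degree at most $L=|W|\,|\Phi^+|$, with all zeros $r$ in the open lower half-plane. Hence $|i-r|\ge1$, so for real $t$
\[
 |t-r|\le|t+i|+|i-r|\le2|t+i|\,|i-r|,
\]
which yields the pointwise bound $|\mathcal P_w(t)|\le2^L|t+i|^L|\mathcal P_w(i)|$. Consequently
\[
 \sum_{w\in W}|\Psi_w(t)|
 =\frac{\sum_w|\mathcal P_w(t)|}{|\mathcal Q(t)|}
 \le\frac{2^L|t+i|^L}{|\mathcal Q(t)|}\sum_{w\in W}|\mathcal P_w(i)|,
 \qquad t\in\mathbb R.
\]
The right side is now the boundary modulus of a \emph{single} holomorphic function that is nonvanishing on the upper half-plane, and its modulus at $i$ is $4^L\sum_w|\Psi_w(i)|$.

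From here either route closes the argument, with $C$ depending only on $\Phi$ and $k$:
\begin{itemize}
\item your Poisson inequality, using $\int P_i\log|t+i|\,\mathrm dt=\log 2$ and $\int P_i\log|\mathcal Q(t)|\,\mathrm dt=\log|\mathcal Q(i)|$;
\item or, as in the paper, the maximum principle for the bounded function $\mathcal Q(z)g(z)/(z+i)^{L+k}$.
\end{itemize}
Without this pointwise comparison of each $\mathcal P_w$ with its value at $i$, the argument does not close.
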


\begin{proof}
Write $\zeta=\lambda+i\eta$ with $\lambda,\eta\in \mathfrak{a}^*$, and
put $M=\|\zeta\|$ and $h=h_\Phi(\eta,X)$.
We first assume that $X$ and $\lambda$ are regular. Fix these
parameters and the derivative directions, and define the entire
function
\[
 F(z):=D_{\xi_1}\cdots D_{\xi_k}
       \mathcal E_{\lambda+z\eta}^\Phi(X),
 \qquad z\in\mathbb C.
\]
Our goal is to bound $F(i)$.

We first record its growth in the upper half-plane. Let
$p(z):=\pi^\vee(i(\lambda+z\eta))$. Differentiating the quotient
in \eqref{eq:complex-motion-relation} gives, wherever $p(z)\neq0$,
\begin{equation}\label{eq:complex-line-expansion}
 F(z)=\frac{1}{p(z)}\sum_{w\in W}
 c_w(z)e^{i\langle w(\lambda+z\eta),X\rangle},
\end{equation}
where each $c_w$ is a polynomial of degree at most $k$, whose
coefficients depend on the fixed data
$\Phi,k,X,\lambda,\eta,\xi_1,\ldots,\xi_k$.
More explicitly, if $[k]=\{1,\ldots,k\}$ and
$D_J:=\prod_{j\in J}D_{\xi_j}$, with $D_\varnothing$ the identity,
then
\[
 c_w(z)=\pi^\vee(\rho)\det(w)
 \sum_{J\subset[k]}D_{[k]\setminus J}(\pi^{-1})(X)
 \prod_{j\in J}i\langle w(\lambda+z\eta),\xi_j\rangle.
\]

The polynomial $p$ is not identically zero, since
$p(0)=\pi^\vee(i\lambda)\neq0$. Then there exists a radius
$R_0=R_0(\Phi,\lambda,\eta)\geq1$ such that $1/p(z)$ is bounded outside the disk $|z|<R_0$ by a constant
depending only on $\Phi,\lambda,\eta$. For $z=x+iy$ with
$y\geq0$, each exponential in
\eqref{eq:complex-line-expansion} has absolute value at most
$e^{hy}$. Combining these bounds with the polynomial bounds for $c_w$
and the boundedness of $F$ on the compact half-disk
$\{z:|z|\leq R_0,\ \operatorname{Im}z\geq0\}$, we obtain
\begin{equation}\label{eq:complex-line-growth}
 |F(x+iy)|\leq C_0(1+|x+iy|)^k e^{hy},
 \qquad y\geq0.
\end{equation} 
Here $C_0$ may depend on
$\Phi,k,X,\lambda,\eta,\xi_1,\ldots,\xi_k$, but is independent
of $z$. The final estimate, obtained below from the bound
on the real axis, will have a constant depending only on
$\Phi$ and $k$.

For $w\in W$ and $\alpha\in\Phi^+$, put
\[
 a_{w,\alpha}:=|\alpha(X)|\langle w\lambda,\alpha^\vee\rangle,
 \qquad
 b_{w,\alpha}:=|\alpha(X)|\langle w\eta,\alpha^\vee\rangle.
\]
Choose $\sigma_{w,\alpha}\in\{-1,1\}$ so that
$\sigma_{w,\alpha}b_{w,\alpha}=|b_{w,\alpha}|$, taking
$\sigma_{w,\alpha}=1$ when $b_{w,\alpha}=0$, and define
\[
 \ell_{w,\alpha}(z):=
 1-i\sigma_{w,\alpha}(a_{w,\alpha}+zb_{w,\alpha}),
 \qquad
 q_w(z):=\prod_{\alpha\in\Phi^+}\ell_{w,\alpha}(z).
\]
If $b_{w,\alpha}\neq0$, then $\ell_{w,\alpha}$ has its unique zero at
$-a_{w,\alpha}/b_{w,\alpha}-i/|b_{w,\alpha}|$, strictly below
the real axis; if $b_{w,\alpha}=0$, then
$\ell_{w,\alpha}=1-ia_{w,\alpha}$ is a nonzero constant. For real $t$,
\begin{equation}\label{eq:complex-root-factor-boundary}
 |\ell_{w,\alpha}(t)|
 =\sqrt{1+(a_{w,\alpha}+tb_{w,\alpha})^2}
 \asymp 1+|\alpha(X)|\,
 |\langle w(\lambda+t\eta),\alpha^\vee\rangle|.
\end{equation}
At $z=i$, one has
\begin{equation}\label{eq:complex-root-factor-interior}
\begin{aligned}
 |\ell_{w,\alpha}(i)|
 &=\sqrt{(1+|b_{w,\alpha}|)^2+a_{w,\alpha}^2}\\
 &\asymp 1+\sqrt{a_{w,\alpha}^2+b_{w,\alpha}^2}
 =1+|\alpha(X)|\,|\langle w\zeta,\alpha^\vee\rangle|.
\end{aligned}
\end{equation}
Both comparisons hold with absolute constants.

Set
\[
 \mathcal Q(z):=\prod_{w\in W}q_w(z),
 \qquad
 \mathcal P_w(z):=\frac{\mathcal Q(z)}{q_w(z)},
 \qquad L:=|W|\,|\Phi^+|.
\]
These are polynomials of degree at most $L$, and all their zeros
lie strictly below the real axis. Proposition~\ref{prop:global-normalized-jets}
and \eqref{eq:complex-root-factor-boundary} give
\[
 |\mathcal Q(t)F(t)|
 \leq C_{\Phi,k}\|\lambda+t\eta\|^k
 \sum_{w\in W}|\mathcal P_w(t)|,
 \qquad t\in\mathbb R.
\]
For any zero $r$ of $\mathcal P_w$, one has $|i-r|\geq1$ and
\[
 |t-r|\leq|t+i|+|i-r|
 \leq2|t+i|\,|i-r|,
 \qquad t\in\mathbb R.
\]
Factoring $\mathcal P_w$ therefore yields
\[
 |\mathcal P_w(t)|\leq2^L|t+i|^L|\mathcal P_w(i)|.
\]
Together with $\|\lambda+t\eta\|\leq\|\lambda\|+|t|\|\eta\|\leq M|t+i|$, this gives
\begin{equation}\label{eq:complex-polynomial-boundary}
 |\mathcal Q(t)F(t)|
 \leq C_{\Phi,k}M^k|t+i|^{L+k}
 \sum_{w\in W}|\mathcal P_w(i)|.
\end{equation}

Consider
\[
 G(z):=\frac{e^{ihz}\mathcal Q(z)F(z)}{(z+i)^{L+k}}.
\]
It is holomorphic on the upper half-plane and continuous on its
closure. It is also bounded there. Indeed, applying \eqref{eq:complex-line-growth}, 
$|e^{ih(x+iy)}|=e^{-hy}$ cancels the exponential in
\eqref{eq:complex-line-growth}, while
$|\mathcal Q(z)|\leq C_1(1+|z|)^L$ and
\[
 |z+i|^2=|z|^2+2\operatorname{Im}z+1
 \geq\tfrac12(1+|z|)^2,
 \qquad \operatorname{Im}z\geq0.
\]
Thus \eqref{eq:complex-polynomial-boundary} and the maximum
principle for bounded holomorphic functions on the upper half-plane
imply
\[
 |G(z)|\leq C_{\Phi,k}M^k
 \sum_{w\in W}|\mathcal P_w(i)|,
 \qquad \operatorname{Im}z\geq0.
\]

Evaluating at $z=i$ and using $\mathcal Q(i)\neq0$ gives
\[
 |F(i)|\leq C_{\Phi,k}M^ke^h
 \sum_{w\in W}\frac{|\mathcal P_w(i)|}{|\mathcal Q(i)|}
 =C_{\Phi,k}M^ke^h\sum_{w\in W}|q_w(i)|^{-1}.
\]
By \eqref{eq:complex-root-factor-interior}, this is
\eqref{eq:complex-normalized-jets} for regular $X$ and
$\operatorname{Re}\zeta$. Approximation by regular $X$ and
$\lambda$, with $\eta$ fixed, proves the result everywhere by
continuity. 
\end{proof}

We remark that, multiplying \eqref{eq:complex-normalized-jets} by
$|\pi^\vee(i\zeta)|/|\pi^\vee(\rho)|$ and using
\eqref{eq:complex-motion-relation} gives the complex-parameter
counterpart of Theorem~\ref{thm:jets}:
\begin{equation}\label{eq:complex-alternant-jets}
\begin{aligned}
 &\left|D_{\xi_1}\cdots D_{\xi_k}
       \left(\frac{A_\zeta^\Phi}{\pi}\right)(X)\right|\\
 &\qquad\leq C_{\Phi,k}\|\zeta\|^k
 e^{h_\Phi(\operatorname{Im}\zeta,X)}
 \sum_{w\in W}\prod_{\alpha\in\Phi^+}
 \frac{|\langle w\zeta,\alpha^\vee\rangle|}
 {1+|\alpha(X)|\,|\langle w\zeta,\alpha^\vee\rangle|}.
\end{aligned}
\end{equation}

\subsection{Bounds for spherical functions and comparison with Cowling--Nevo}
\label{subsec: cowling-nevo}

\begin{proof}[Proof of Theorem~\ref{cor:spherical-derivatives}]
Each derivative of $t/\sinh t$ is bounded in absolute value by a
constant times $t/\sinh t$. Applying Leibniz's rule to
\eqref{eq:varphi0} therefore gives, for every integer $r\geq0$ and unit vectors
$\eta_1,\ldots,\eta_r$,
\begin{equation}\label{eq:phi-zero-derivatives}
 \left|D_{\eta_1}\cdots D_{\eta_r}\varphi_0(X)\right|
 \leq C_{G,r}\varphi_0(X).
\end{equation}
Apply Leibniz's rule to
$\varphi(\zeta,X)=\varphi_0(X)\mathcal E_\zeta^\Phi(X)$,
using Proposition~\ref{prop:complex-normalized-jets} for each
derivative order at most $k$ and
\eqref{eq:phi-zero-derivatives}. This gives
\begin{align*}
 \left|D_{\xi_1}\cdots D_{\xi_k}
       \varphi(\zeta,\cdot)(X)\right|
 & \leq C_{G,k}\varphi_0(X)
 e^{h_\Phi(\operatorname{Im}\zeta,X)}
 \mathcal R_\Phi(\zeta,X)\sum_{j=0}^k\|\zeta\|^j\\
 & \leq C_{G,k}(1+\|\zeta\|)^k
 \varphi_0(X)e^{h_\Phi(\operatorname{Im}\zeta,X)}
 \mathcal R_\Phi(\zeta,X),
\end{align*}
as required.
\end{proof}

We now compare the rootwise estimate
\eqref{eq:spherical-derivative-bound} with the estimates
of Cowling and Nevo. 
For $\beta\in\Phi^+$, write its simple-root expansion as
$\beta=\sum_{j=1}^r n_j(\beta)\alpha_j$. Following Cowling and Nevo
\cite{CowlingNevo2001}, define
\[
 \gamma_\Phi:=\min_{1\leq j\leq r}
 \#\{\beta\in\Phi^+:n_j(\beta)>0\}.\footnote{For irreducible $\Phi$, this agrees with the first optimal
``peeling number'' $q_{1,0}$ defined in
\cite[Lemma~3.1]{ZhangCharacterRestriction}.}
\]
For regular $H\in\mathfrak a$, put
\[
 B_\Phi(H):=\prod_{\alpha\in\Phi^+}|\alpha(H)|^{-1}.
\]
For $Y\in\mathfrak a$, define
\[
 \Omega_\Phi(Y):=\prod_{\alpha\in\Phi^+}(1+|\alpha(Y)|),
\]
and, for $\mu\in\mathfrak a^*_{\mathbb C}$, write
\[
 S_\mu^\Phi(Y):=\sum_{w\in W}e^{\langle w\mu,Y\rangle}.
\]

Theorem~\ref{cor:spherical-derivatives} recovers
\cite[Theorem~1.2]{CowlingNevo2001} for all complex spectral
parameters. In our convention, their spectral parameter is
$i\zeta=-\eta+i\lambda$ when $\zeta=\lambda+i\eta$, and
their half-sum of positive restricted roots with multiplicities is
$2\rho$. Their estimate therefore takes the form
\begin{equation}\label{eq:complex-cowling-nevo}
\begin{aligned}
 \left|\frac{\mathrm d^k}{\mathrm dt^k}
       \varphi(\zeta,tH)\right|
\leq C_{G,k}B_\Phi(H)
 \frac{(1+\|\zeta\|)^k}
      {(1+t\|\zeta\|)^{\gamma_\Phi}}
 \Omega_\Phi(tH)
 \frac{S_{-\eta}^\Phi(tH)}{S_{2\rho}^\Phi(tH)},
\end{aligned}
\end{equation}
for unit regular $H\in\mathfrak a$, $t\geq0$, and
$\zeta\in\mathfrak a^*_{\mathbb C}$.

To obtain this from the rootwise estimate, first let
$\nu\in\mathfrak a^*$ be real. For each $w\in W$, positivity
of the simple-coroot expansions yields at least $\gamma_\Phi$
positive roots $\alpha$ for which
$|\langle w\nu,\alpha^\vee\rangle|\geq c_\Phi\|\nu\|$.
Since $0<|\alpha(H)|\leq C_\Phi$, \[
\begin{aligned}
 \frac{1}
 {1+|\alpha(tH)|\,|\langle w\nu,\alpha^\vee\rangle|}
 &\leq
 \frac{1}{1+c_\Phi t|\alpha(H)|\,\|\nu\|}
 &\leq
 \frac{C_\Phi}{|\alpha(H)|(1+t\|\nu\|)}.
\end{aligned}
\]
The remaining factors are at most
$1\leq C_\Phi|\alpha(H)|^{-1}$. Multiplication and summation
over $w$ give
\[
 \mathcal R_\Phi(\nu,tH)
 \leq C_\Phi B_\Phi(H)(1+t\|\nu\|)^{-\gamma_\Phi}.
\]
For $\zeta=\lambda+i\eta$, choose
$\nu\in\{\lambda,\eta\}$ with
$\|\nu\|\geq\|\zeta\|/\sqrt2$.
Since $|\langle w\zeta,\alpha^\vee\rangle|
\geq|\langle w\nu,\alpha^\vee\rangle|$, it follows that
\begin{equation}\label{eq:collapse-rootwise-factors}
 \mathcal R_\Phi(\zeta,tH)
 \leq C_\Phi B_\Phi(H)
 (1+t\|\zeta\|)^{-\gamma_\Phi}.
\end{equation}
Moreover,
\begin{align}\label{eq: S-eta}
     e^{h_\Phi(\eta,tH)}\leq S_{-\eta}^\Phi(tH)
 \leq |W|e^{h_\Phi(\eta,tH)},
\end{align}
and \cite[Lemma~2.1]{CowlingNevo2001} gives
\begin{equation}\label{eq:phi-zero-cowling-nevo}
 \varphi_0(tH)
 \leq C_G\frac{\Omega_\Phi(tH)}{S_{2\rho}^\Phi(tH)}.
\end{equation}
Combining the above three bounds with
\eqref{eq:spherical-derivative-bound}, with
$\xi_1=\cdots=\xi_k=H$, proves
\eqref{eq:complex-cowling-nevo}. 

We also recover the more precise estimate in
\cite[Corollary~2.5]{CowlingNevo2001}. Let $\Phi_0$ be a
standard parabolic root subsystem and put
$\Gamma=\Phi^+\setminus\Phi_0^+$. For $\zeta\neq0$ such that
$\langle\zeta,\beta^\vee\rangle\neq0$ for all $\beta\in\Gamma$,
we have
\begin{equation}\label{eq:collapse-subsystem-factors}
\begin{aligned}
 \mathcal R_\Phi(\zeta,tH)
 &\leq
 \frac{C_\Phi}{(1+t\|\zeta\|)^{|\Gamma|}}
 \left(
 \prod_{\beta\in\Gamma}
 \frac{\|\zeta\|}{|\langle\zeta,\beta^\vee\rangle|}
 \right)
 \sum_{w\in W}
 \prod_{\beta\in\Gamma}|(w\beta)(H)|^{-1}.
\end{aligned}
\end{equation}
Indeed, Weyl invariance of the inner product $\langle\, ,\, \rangle$ and reindexing the
positive roots give
\[
 \mathcal R_\Phi(\zeta,tH)
 =
 \sum_{w\in W}\prod_{\beta\in\Phi^+}
 \frac{1}
 {1+t|(w\beta)(H)|\,|\langle\zeta,\beta^\vee\rangle|}.
\]
We retain in the
$w$th summand only the factors indexed by $\beta\in\Gamma$.
For $\beta\in\Gamma$, the corresponding factor satisfies
\[
 \frac{1}
 {1+t|(w\beta)(H)|\,|\langle\zeta,\beta^\vee\rangle|}
 \leq
 \frac{C_\Phi}{1+t\|\zeta\|}
 \frac{\|\zeta\|}
 {|(w\beta)(H)|\,|\langle\zeta,\beta^\vee\rangle|},
\]
since $|(w\beta)(H)|\leq C_\Phi$ and
$|\langle\zeta,\beta^\vee\rangle|\leq C_\Phi\|\zeta\|$.
All remaining factors are at most one.

Up to constants depending only on $\Phi$, the spectral product
and spatial sum in \eqref{eq:collapse-subsystem-factors} are
their $B_1^*(i\zeta/\|\zeta\|)$ and $B_1(H)$, respectively.
Combining this estimate with
\eqref{eq:spherical-derivative-bound}, \eqref{eq: S-eta} and 
\eqref{eq:phi-zero-cowling-nevo} 
therefore gives their Corollary~2.5.

Compared with the Cowling--Nevo estimates,
\eqref{eq:spherical-derivative-bound} retains the individual
spatial and spectral root factors. It records the transition at
$|\alpha(X)|\,|\langle w\zeta,\alpha^\vee\rangle|\asymp1$
separately for each root, with uniform control across root
hyperplanes and their intersections. In addition, it applies
to arbitrary mixed radial derivatives.

\section{Estimates for characters of compact Lie groups}

\subsection{Localization near affine root hyperplanes}

We now turn to the compact setting.  The affine root hyperplanes are
\[
 \mathcal H_{\alpha,k}
   =\{H\in\mathfrak t:\alpha(H)=2\pi k\},
 \qquad \alpha\in\Phi,\ k\in\Z.
\]
Let
\(
Q^\vee=\sum_{\alpha\in\Phi}\mathbb Z\alpha^\vee
\)
be the coroot lattice, and let
$W_{\rm aff}=W\ltimes2\pi Q^\vee$ be the affine Weyl group. We have the canonical linear-part homomorphism
$W_{\rm aff}\to W$. 

By an affine flat of the affine root arrangement we mean a
nonempty intersection of affine root hyperplanes
$\mathcal H_{\alpha,k}$, with $\mathfrak t$ regarded as the
intersection of the empty family. The following localization
lemma allows us to replace $|\alpha(X)|$ by $d_\alpha(H)$
in the alternant estimate.

\begin{lemma}\label{lem:affine-local}
There are constants $c_\Phi,C_\Phi>0$ such that, for every
$H\in\mathfrak t$, there is an affine flat $F$ of the affine root
arrangement with the following properties. Let $H_0\in F$ be the orthogonal projection of $H$ onto $F$, and write $X=H-H_0$. Let
\[
 \Phi_F=\{\alpha\in\Phi:
   F\subset\mathcal H_{\alpha,k}\ \text{for some }k\in\Z\}.
\]
Then
\begin{align}
 c_\Phi|\alpha(X)|
 &\le d_\alpha(H)\le C_\Phi|\alpha(X)|,
       &&\alpha\in\Phi_F,\label{eq:inside-affine}\\
 d_\alpha(H)&\ge c_\Phi,
       &&\alpha\notin\Phi_F.\label{eq:outside-affine}
\end{align}
Moreover, the pointwise stabilizer $P_F$ of $F$ in $W_{\rm aff}$ is finite,
the linear-part map $P_F\to W$ is injective, and its image $W_F$ is the Weyl
group of the root subsystem $\Phi_F$.
\end{lemma}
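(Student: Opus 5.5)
We construct $F$ by a threshold-and-gap argument: the affine hyperplanes that are close to $H$ must themselves intersect, and $F$ is taken to be their intersection. Fix a small $\delta>0$ and a large gap ratio $A$, both depending only on $\Phi$. For each $\alpha\in\Phi^+$ let $k_\alpha\in\Z$ be chosen with $|\alpha(H)-2\pi k_\alpha|\le\pi$. Then
\[
\operatorname{dist}(\alpha(H),2\pi\Z)=|\alpha(H)-2\pi k_\alpha|,
\]
and on $[0,\pi]$ we have $\frac{2}{\pi}\cdot\frac{t}{2}\le\sin\frac t2\le\frac t2$. Consequently
\[
d_\alpha(H)\asymp\operatorname{dist}(\alpha(H),2\pi\Z).
\]
Sort these distances $\delta_1\le\cdots\le\delta_N$, where $N=|\Phi^+|$. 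Using a pigeonhole argument over the scales $\delta, A^{-1}\delta,\dots,A^{-N}\delta$, we choose a threshold $\tau$ with $A^{-N}\delta\le\tau\le\delta$ such that no distance lies in $(\tau/A,\tau]$. Set $S=\{\alpha\in\Phi^+:\operatorname{dist}(\alpha(H),2\pi\Z)\le\tau/A\}$. For every $\alpha\notin\pm S$ we then have $\operatorname{dist}\ge\tau\ge A^{-N}\delta$, and this already gives \eqref{eq:outside-affine} for those roots.

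The key step is to show that $F_0=\bigcap_{\alpha\in S}\mathcal H_{\alpha,k_\alpha}$ is nonempty and lies within distance $\lesssim_\Phi\tau/A$ of $H$. For this we choose a maximal linearly independent subset $B\subset S$. The system $\beta(Y)=2\pi k_\beta$ for $\beta\in B$ can be solved with $\|Y-H\|\le C_\Phi\max_{\beta\in B}|\beta(H)-2\pi k_\beta|\le C_\Phi\tau/A$; this uses the least-norm solution, whose constants are controlled by the finitely many possible choices of $B$. For any other $\alpha\in S$, write $\alpha=\sum_{\beta\in B} c_\beta\beta$. At $Y$ we get $\alpha(Y)=2\pi\sum c_\beta k_\beta$. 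Since $\alpha(Y)$ differs from $2\pi k_\alpha$ by at most $C_\Phi\tau/A$, and since the coefficients $c_\beta$ are rational with bounded denominators, the quantity $\sum c_\beta k_\beta$ lies in $\frac1D\Z$ for some $D$ depending only on $\Phi$. Taking $\delta$ small therefore forces $\sum c_\beta k_\beta=k_\alpha$, so $Y\in F_0$.

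The main obstacle is this integrality step. A cleaner way to handle it is to note that $\alpha(Y)\in 2\pi\frac1D\Z$ and is close to $2\pi k_\alpha$, which forces equality. Another option is to use the affine Weyl group: move $H$ into the closure of the fundamental alcove, where the near-walls are exactly the faces of the alcove, and these are known to meet in a face.

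Having $F_0$, we set $F=F_0$ and let $H_0$ be the orthogonal projection of $H$ onto $F$, so that $\|X\|\le C_\Phi\tau/A$.

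\textbf{Roots in $\Phi_F$.} Suppose $\alpha\in\Phi_F$, so that $F\subset\mathcal H_{\alpha,k}$. Then $\alpha(H)-2\pi k=\alpha(X)$ and $|\alpha(X)|\le C\tau/A<\pi$. Hence $k=k_\alpha$, and the comparison between $\sin$ and its argument gives \eqref{eq:inside-affine}.

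\textbf{Roots outside $\Phi_F$.} Suppose $\alpha\notin\Phi_F$. If $\alpha\in\pm S$, then $\alpha$ vanishes mod $2\pi$ on $F$, which contradicts $\alpha\notin\Phi_F$; so this case does not occur. Otherwise $\alpha\notin\pm S$, so its distance is at least $\tau\ge A^{-N}\delta$, and this proves \eqref{eq:outside-affine}.

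\textbf{The stabilizer $P_F$.} By the standard theory of affine Weyl groups (Bourbaki, Humphreys \S4), the pointwise stabilizer of any point $H_0$ in $W_{\rm aff}$ is a finite group generated by the affine reflections through the walls containing $H_0$. Choosing $H_0$ generic in $F$, these walls are exactly the $\mathcal H_{\alpha,k}$ with $\alpha\in\Phi_F$. Therefore $P_F$ is generated by the reflections $Y\mapsto s_\alpha Y+2\pi k\alpha^\vee$, and it is finite. Its linear part is generated by the $s_\alpha$ with $\alpha\in\Phi_F$. The subset $\Phi_F$ is a closed root subsystem, namely the roots vanishing on $F-H_0$ taken mod the lattice condition. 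The linear-part map is injective because an element of $P_F$ with trivial linear part is a translation fixing $H_0$, hence the identity. So the image is $W_F$, the Weyl group of $\Phi_F$.
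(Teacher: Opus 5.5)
Your argument is correct, but it proves the key fact by a different mechanism than the paper. The key fact is that the affine root hyperplanes close to $H$ share a common point near $H$.

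The paper first moves $H$ into the closed fundamental alcove $\overline{\cA}$ by an element of $W_{\rm aff}$. After that, only the finitely many affine forms whose zero sets meet $\overline{\cA}$ matter. Compactness then gives a $\delta_\Phi$ smaller than $\min_{\overline{\cA}}\max_{a\in\mathcal S}|a|$ for every subfamily $\mathcal S$ with empty common zero set. Hence the forms with $|a(H)|\le\delta_\Phi$ are automatically consistent.

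You avoid the alcove reduction and compactness entirely, and get consistency from arithmetic. The coefficients of $\alpha\in\operatorname{span}(B)$ in terms of $B$ have denominators bounded in terms of $\Phi$, by Cramer's rule applied to the integer simple-root expansions. So $\alpha(Y)$ lies in $2\pi D^{-1}\Z$ within distance $<2\pi/D$ of $2\pi k_\alpha$, and must equal it.

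The distance bound for $X$ is the same least-norm (Hoffman-type) estimate in both proofs. Your route is self-contained and gives effective constants. The paper's route is shorter and purely geometric, and needs no discussion of denominators.

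One simplification: your pigeonhole gap is harmless but unnecessary. Replace $\tau/A$ by a single fixed small threshold $\delta$, which is what the paper does. Then each of the following goes through verbatim:
\begin{itemize}
\item the integrality step;
\item the bound $\norm{X}\lesssim_\Phi\delta$;
\item the comparison \eqref{eq:inside-affine};
\item the lower bound $\operatorname{dist}(\alpha(H),2\pi\Z)>\delta$ for $\alpha\notin\Phi_F$.
\end{itemize}
For $A$ large, the gap additionally yields $\Phi_F=\pm S$, but the lemma never uses this.

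For the stabilizer, your reduction to a generic point of $F$ followed by the point-stabilizer theorem is equivalent to the paper's direct citation of Bourbaki. However, do not reuse the name $H_0$ for that generic point, since the lemma already fixes $H_0$ as the orthogonal projection of $H$ onto $F$.
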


\begin{proof}
By an affine Weyl transformation, we assume $H$ lies in the closure $\overline{\cA}$
of a fixed fundamental alcove.  Let $\mathcal F$ be the finite collection of affine forms
\[
 a_{\alpha,k}(Y)=\alpha(Y)-2\pi k
\]
whose zero hyperplanes intersect $\overline{\cA}$.

For a subset $\mathcal S\subset\mathcal F$ whose zero hyperplanes have empty
intersection, compactness of $\overline{\cA}$ implies
\[
 \min_{Y\in\overline{\cA}}\max_{a\in\mathcal S}|a(Y)|>0.
\]
There are only finitely many such subsets.  Choose $\delta_\Phi>0$ smaller
than all these positive minima. 

Let $\mathcal S(H)$ consist of the forms $a\in\mathcal F$ with
$|a(H)|\le\delta_\Phi$.  By the choice of $\delta_\Phi$, their zero
hyperplanes have a common intersection; call it $F$. For each nonempty consistent subfamily $\mathcal S$ of
$\mathcal F$, the distance of a point $H$ to the common
zero set is bounded by a constant times  $\max_{a\in\mathcal S}|a(H)|$. Since there are only finitely
many such subfamilies, the constant can be chosen uniformly.
Consequently, the orthogonal projection $H_0$ of $H$ onto $F$
satisfies
\begin{align}\label{eq: dist}
 \|X\|=\operatorname{dist}(H,F)\leq C_\Phi\delta_\Phi.
\end{align}

If $\alpha\in\Phi_F$, then $\alpha(H_0)=2\pi k$ for some
$k\in\Z$. Thus
\[
 d_\alpha(H)=|\sin(\alpha(X)/2)|.
\]
Taking $\delta_\Phi$ sufficiently small, the bound \eqref{eq: dist} ensures that $|\alpha(X)|\leq1$ for all
$\alpha\in\Phi$. The elementary comparison
$|\sin(t/2)|\asymp|t|$ for $|t|\leq1$ therefore proves
\eqref{eq:inside-affine}.

By local finiteness of the affine root arrangement and
compactness of $\overline{\cA}$, we also choose $\delta_\Phi$
sufficiently small that
\[
 |a_{\alpha,k}(Y)|>\delta_\Phi
 \qquad
 \text{for all }Y\in\overline{\cA}
 \text{ and }a_{\alpha,k}\notin\mathcal F.
\]
Thus every affine form satisfying
$|a_{\alpha,k}(H)|\leq\delta_\Phi$ belongs to $\mathcal F$.
If $\alpha\notin\Phi_F$, then
$\operatorname{dist}(\alpha(H),2\pi\Z)>\delta_\Phi$;
otherwise there would be some $k\in\mathbb{Z}$ such that $|a_{\alpha,k}(H)|\leq\delta_\Phi$, forcing $\alpha\in\Phi_F$. This proves
\eqref{eq:outside-affine}.

For $\alpha,\beta\in\Phi_F$, crystallographic integrality implies that
$s_\alpha\beta=\beta-\langle\beta,\alpha^\vee\rangle\alpha$
is constant on $F$ with value in $2\pi\Z$. Thus $\Phi_F$ is a root
subsystem of $\Phi$. By
\cite[Ch.~V, \S~3, no.~3, Proposition~2]{BourbakiLie46},
$P_F$ is generated by the affine reflections whose hyperplanes
contain $F$. A translation sending a point of \(F\) to the origin conjugates these generating affine
reflections to the root reflections $s_\alpha$ with
$\alpha\in\Phi_F$. Thus the linear-part map identifies
$P_F$ with the Weyl group $W_F$ of $\Phi_F$; in particular,
$P_F$ is finite.
\end{proof}

\subsection{Proofs of Theorems~\ref{thm:periodic}
and~\ref{cor:character}}

\begin{proof}[Proof of Theorem~\ref{thm:periodic}]
Apply Lemma~\ref{lem:affine-local} and write $H=H_0+X$.  Decompose the finite
Weyl group into right cosets
\begin{align}\label{eq:WFu}
     W=\bigsqcup_{u\in\cD_F}W_Fu.
\end{align}
For each $p\in W_F$, let $\widetilde p\in P_F$ be its unique affine lift.
Since $\widetilde pH_0=H_0$, one has
\[
 p^{-1}H_0-H_0\in2\pi Q^\vee.
\]
If $\lambda$ is integral, so is $u\lambda$, and therefore
\begin{equation}\label{eq:phase-at-H0}
 e^{i\ip{pu\lambda}{H_0}}
 =e^{i\ip{u\lambda}{p^{-1}H_0}}
 =e^{i\ip{u\lambda}{H_0}}.
\end{equation}
The contribution of the coset $W_Fu$ is consequently
\begin{align}
 \sum_{p\in W_F}\det(pu)e^{i\ip{pu\lambda}{H}}
  =\det(u)e^{i\ip{u\lambda}{H_0}}
    A_{u\lambda}^{\Phi_F}(X).
\label{eq:affine-coset-block}
\end{align}
We apply Theorem~\ref{thm:linear} with root system $\Phi_F$
and spectral parameter $u\lambda$ to establish, using \eqref{eq:inside-affine}, 
\begin{align}
 \left|A_{u\lambda}^{\Phi_F}(X)\right|
 \le C_\Phi\sum_{p\in W_F}
 \prod_{\alpha\in\Phi_F^+}
 \min\{1,d_\alpha(H)
          |\langle pu\lambda,\alpha^\vee\rangle|\}.
\label{eq:subsystem-bound}
\end{align}

It remains to insert the roots outside $\Phi_F$.  For
$\alpha\notin\Phi_F$, \eqref{eq:outside-affine} gives
$d_\alpha(H)\ge c_\Phi$.  Regularity and integrality give
\( |\langle pu\lambda,\alpha^\vee\rangle|
 \in\Z\setminus\{0\}\), 
so
\begin{equation}\label{eq:missing-factor}
 \min\{1,d_\alpha(H)
        |\langle pu\lambda,\alpha^\vee\rangle|\}
 \ge \min\{1,c_\Phi\}>0.
\end{equation}
Thus, after enlarging $C_\Phi$ if necessary, we may insert the
factors corresponding to roots outside
$\Phi_F$ into the product in \eqref{eq:subsystem-bound}. Summing the resulting bounds over $u\in\cD_F$ proves \eqref{eq:periodic-main}.
\end{proof}

\begin{proof}[Proof of Theorem~\ref{cor:character}]
Put
\[
 \Lambda=\mu+\rho,
 \qquad
 M=\|\Lambda\|.
\]
The weight $\Lambda$ is regular integral, and hence $M\geq c_\Phi>0$.
Fix $H\in\mathfrak t$, apply Lemma~\ref{lem:affine-local}, and write
$H=H_0+X$.  Let $\operatorname{pr}_F$ denote orthogonal projection onto
the affine subspace $F$, and put
\[
 X_F(Y)=Y-\operatorname{pr}_F(Y).
\]
Thus $X_F(H)=X$ and $X_F(Y)\in\spanR(\Phi_F)$.  The argument giving
\eqref{eq:affine-coset-block}, now applied with
$\operatorname{pr}_F(Y)\in F$, gives the local identity
\begin{equation}\label{eq:affine-coset-block-variable}
 A_\Lambda^\Phi(Y)
 =\sum_{u\in\cD_F}\det(u)
 e^{i\ip{u\Lambda}{\operatorname{pr}_F(Y)}}
 A_{u\Lambda}^{\Phi_F}(X_F(Y)).
\end{equation}

Write
\[
 \pi_F(Z)=\prod_{\alpha\in\Phi_F^+}\alpha(Z),
 \qquad
 g_F(Y)=\frac{A_\rho^\Phi(Y)}{\pi_F(X_F(Y))}.
\]  
Up to
a nonzero constant, the Weyl denominator formula expresses $g_F$ locally
as
\[
 \prod_{\alpha\in\Phi_F^+}
 \frac{\sin(\alpha(X_F(Y))/2)}{\alpha(X_F(Y))}
 \prod_{\alpha\in\Phi^+\setminus\Phi_F^+}
 \sin(\alpha(Y)/2).
\]
By \eqref{eq:inside-affine} and \eqref{eq:outside-affine},
the factors in this product are bounded away from zero
at $H$, where $\sin(t/2)/t$ is interpreted
as $1/2$ at $t=0$. These factors have uniformly bounded
derivatives of every fixed order. It follows that $g_F$
is nonvanishing near $H$ and, for $0\leq j\leq k$
and unit directions $\eta_1,\ldots,\eta_j\in\mathfrak{t}$,
\begin{equation}\label{eq:affine-denominator-jets}
 \left|
 D_{\eta_1}\cdots D_{\eta_j}g_F^{-1}(H)
 \right|
 \leq C_{\Phi,k}.
\end{equation}

For $u\in\cD_F$, apply Theorem~\ref{thm:jets} to the root subsystem
$\Phi_F$ with spectral parameter $u\Lambda$, noting that
$\|u\Lambda\|=M$. For $0\leq j\leq k$ and unit directions
$\zeta_1,\ldots,\zeta_j\in\mathfrak{t}$, this gives
\begin{align}
 \left|
 D_{\zeta_1}\cdots D_{\zeta_j}
 \left(\frac{A_{u\Lambda}^{\Phi_F}}{\pi_F}\right)(X)
 \right|
\leq C_{\Phi,k}M^j
 \sum_{p\in W_F}
 \prod_{\alpha\in\Phi_F^+}
 \frac{
  |\langle pu\Lambda,\alpha^\vee\rangle|
 }{
  1+|\alpha(X)|\,|\langle pu\Lambda,\alpha^\vee\rangle|
 }.
\label{eq:affine-normalized-subsystem-jets}
\end{align}
The constant may be taken to depend only on $\Phi$ and $k$, since
$\Phi_F$ ranges over finitely many root subsystems of $\Phi$.

Dividing \eqref{eq:affine-coset-block-variable} by
$\pi_F(X_F(Y))$ and multiplying by $g_F(Y)^{-1}$ gives
\[
 (\chi_\mu\circ\exp)(Y)
 =
 g_F(Y)^{-1}
 \sum_{u\in\cD_F}\det(u)
 e^{i\ip{u\Lambda}{\operatorname{pr}_F(Y)}}
 \left(\frac{A_{u\Lambda}^{\Phi_F}}{\pi_F}\right)(X_F(Y))
\]
for $Y$ near $H$. We now apply the Leibniz rule to this identity. Each derivative of the exponential phase $e^{i\ip{u\Lambda}{\operatorname{pr}_F(Y)}}$ costs at
most $C_\Phi M$. It follows from \eqref{eq:affine-denominator-jets} and \eqref{eq:affine-normalized-subsystem-jets} that
\begin{align}
 &\left|
 D_{\xi_1}\cdots D_{\xi_k}
 (\chi_\mu\circ\exp)(H)
 \right|
 \notag\\
 &\qquad\leq C_{\Phi,k}M^k
 \sum_{u\in\cD_F}\sum_{p\in W_F}
 \prod_{\alpha\in\Phi_F^+}
 \frac{
  |\langle pu\Lambda,\alpha^\vee\rangle|
 }{
  1+|\alpha(X)|
  |\langle pu\Lambda,\alpha^\vee\rangle|
 }.
\label{eq:character-local-jet-bound}
\end{align}

By \eqref{eq:inside-affine}, we have
\[
 \frac{
  \abs{\ip{pu\Lambda}{\alpha^\vee}}
 }{
  1+|\alpha(X)|\abs{\ip{pu\Lambda}{\alpha^\vee}}
 }
 \leq C_\Phi
 \frac{
  \abs{\ip{pu\Lambda}{\alpha^\vee}}
 }{
  1+d_\alpha(H)\abs{\ip{pu\Lambda}{\alpha^\vee}}
 },
 \qquad \alpha\in\Phi_F^+.
\]
If $\alpha\notin\Phi_F$, regularity and integrality give
$\abs{\ip{pu\Lambda}{\alpha^\vee}}\geq1$, while
$d_\alpha(H)\leq1$; hence
\[
 \frac{
  \abs{\ip{pu\Lambda}{\alpha^\vee}}
 }{
  1+d_\alpha(H)\abs{\ip{pu\Lambda}{\alpha^\vee}}
 }
 \geq\frac12.
\]
After enlarging $C_{\Phi,k}$, we may therefore insert the factors corresponding to roots outside $\Phi_F$ into each product in \eqref{eq:character-local-jet-bound}, and \eqref{eq:character-main} follows from the decomposition \eqref{eq:WFu}.
\end{proof}
\begin{remark}[Comparison with Hare's character bounds]
\label{rem:character-comparison}
For $H\in\mathfrak t$, put
\[
\Phi_H^+
:=\{\alpha\in\Phi^+:\alpha(H)\in2\pi\mathbb Z\}.
\]
The case $k=0$ of Theorem~\ref{cor:character} gives
\begin{equation}\label{eq:character-normalized-rootwise}
|\chi_\mu(\exp H)|
\leq C(H,\mu)
\sum_{w\in W}
\prod_{\alpha\in\Phi_H^+}
|\langle w(\mu+\rho),\alpha^\vee\rangle|,
\end{equation}
where
\[
C(H,\mu)
:=C_U\max_{w\in W}
\prod_{\alpha\in\Phi^+\setminus\Phi_H^+}
\frac{\abs{\ip{w(\mu+\rho)}{\alpha^\vee}}}{1+ d_\alpha(H)\abs{\ip{w(\mu+\rho)}{\alpha^\vee}}}.
\]
Hare obtained bounds of the same form as \eqref{eq:character-normalized-rootwise} \cite[the last inequality on p.~5]{Hare1998} (see also \cite[Eq.~(2.3)]{HareWilsonYee2000}) with $C(H,\mu)$ replaced by the larger factor
$$
C_U
\prod_{\alpha\in\Phi^+\setminus\Phi_H^+}
d_\alpha(H)^{-1}.
$$
\end{remark}

\begin{remark}[$L^p$ bounds for characters]
\label{rem:character-Lp}
The $k=0$ case of \eqref{eq:character-main} provides a starting
point for obtaining sharp $L^p$ bounds for irreducible characters,
uniformly over all dominant highest weights. Combining this
pointwise character bound with the Weyl integration formula and
decomposing the maximal torus according to its affine root
hyperplanes reduces the derivation of $L^p$ upper bounds to the
estimation of integrals involving explicit root factors.
Retaining the individual spectral root factors allows the resulting
bounds to reflect the relative sizes of the spectral coordinates,
including when the normalized spectral direction approaches a Weyl
chamber wall. This extends the strategy used for
$\mathrm{SU}(3)$ in \cite[Section~5]{ZhangSU3}.

On the other hand, establishing sharpness also requires matching
$L^p$ lower bounds. Once the character is shown to be sufficiently
large in absolute value at suitable points, the derivative estimates
can ensure that it remains comparably large on neighborhoods of
controlled size, yielding $L^p$ lower bounds by integration.
We leave a detailed study of these questions to future work.
\end{remark}

\bibliographystyle{amsplain}
\bibliography{weyl}

\end{document}